\newcommand{\RR}{{\mathbb{R}}}
\newcommand{\CC}{{\mathbb{C}}}
\newcommand{\ZZ}{{\mathbb{Z}}}
\newcommand{\NN}{{\mathbb{N}}}
\newcommand{\CP}{{\mathbb{CP}}}
\newcommand{\pa}{\partial}
\newcommand{\ii}{{\rm i}}
\newcommand{\dd}{{\rm d}}
\newcommand{\sfrac}[2]{{\textstyle\frac{#1}{#2}}}
\newcommand{\Tr}{\mathrm{Tr}}
\newcommand{\End}{\operatorname{End}}
\newcommand{\pardeg}{\operatorname{pardeg}}
\newcommand{\diag}{\operatorname{diag}}
\newcommand{\Sh}{\Sigma}%the shift matrix
\newcommand{\RS}{M}%a Riemann surface
\newcommand{\U}{\mathrm{U}}
\renewcommand{\mod}{\;\operatorname{mod}\;}
\newcommand{\doiref}[2]{\href{http://dx.doi.org/#1}{#2}}
\newcommand{\arxivref}[1]{\href{http://arxiv.org/abs/#1}{arXiv:#1}}
\theoremstyle{plain}
\newtheorem{theorem}{Theorem}
\newtheorem{lemma}[theorem]{Lemma}
\newtheorem{proposition}[theorem]{Proposition}
\newtheorem{corollary}[theorem]{Corollary}
\theoremstyle{definition}
\newtheorem{definition}[theorem]{Definition}
\title{Parabolic Higgs bundles and\\ cyclic monopole chains}
\author{Derek Harland\footnote{email address: d.g.harland@leeds.ac.uk}
  \bigskip
  \\School of Mathematics,
  \\University of Leeds,
  \\LS2 9JT}
\date{23rd May 2021}
\begin{document}

\maketitle

\abstract{We formulate a correspondence between SU(2) monopole chains and ``spectral data'', consisting of curves in $\CP^1\times\CP^1$ equipped with parabolic line bundles.  This is the analogue for monopole chains of Donaldson's association of monopoles with rational maps.  The construction is based on the Nahm transform, which relates monopole chains to Higgs bundles on the cylinder.  As an application, we classify  charge $k$ monopole chains which are invariant under actions of $\ZZ_{2k}$.  We present images of these symmetric monopole chains that were constructed using a numerical Nahm transform.}

\section{Introduction}

This paper concerns monopole chains.  An SU(2) monopole chain is a pair $(A,\Phi)$ consisting of an SU(2) connection $A$ and an $\mathfrak{su}(2)$-valued section $\Phi$ over $\RR^2\times S^1$ satisfying the Bogomolny equation,
\begin{equation}
\label{bog eq}
F^{A}=\ast\dd^{A}\Phi,
\end{equation}
and the boundary conditions, 
\begin{equation}
|F^A|\to 0,\quad|\Phi|\sim v+u\ln\rho\text{ as }\rho\to\infty.
\end{equation}
Here $u,v\in\RR$, $u>0$ and $\rho$ is the radial coordinate on $\RR^2$.  The second boundary condition implies that $\Phi$ is non-vanishing for large $\rho$, so defines a map from a torus to $\RR^3\setminus\{0\}$ with well-defined degree $k\in\ZZ$.  In fact $k$ is non-negative and is related to $u$ via $u=k/\beta$, with $\beta$ being the circumference of $S^1$.  For a more detailed discussion of the boundary conditions see \cite{ck,mochizuki}.  There is a fairly substantial literature on monopole chains \cite{ck,ckagain,foscolo1,foscolo2,hw1,hw2,maldonado1,maldonado2,maldonado3,maldonadoward,mochizuki}. Like monopoles on $\RR^3$, monopole chains form moduli spaces which are known to be hyperk\"ahler \cite{foscolo2}.  Monopole chains can be identified with difference modules via a Kobayashi-Hitchin correspondence \cite{mochizuki}.

In the first part of this article we describe results which allow one to study the moduli spaces of monopole chains fairly directly.  The first of these, theorem \ref{thm:KH}, is a Kobayashi--Hitchin correspondence between certain moduli spaces $\mathcal{M}_{\text{Higgs}}^k$ of parabolic Higgs bundles on $\CP^1$ and moduli spaces $\mathcal{M}_{\text{Hit}}^k$ of solutions of Hitchin's equations on the cylinder.  The latter correspond to moduli spaces $\mathcal{M}_{\text{mon}}^k$ of monopole chains, via the Nahm transform of Cherkis--Kapustin \cite{ck}.  The second, theorem \ref{thm:spectral}, gives a bijection between the moduli spaces $\mathcal{M}_{\text{Higgs}}^k$ and moduli spaces $\mathcal{M}_{\text{spec}}^k$ of spectral curves equipped with parabolic line bundles.

The composition $\mathcal{M}_{\text{spec}}^k\to\mathcal{M}^k_{\text{Higgs}}\to\mathcal{M}^k_{\text{Hit}}\to\mathcal{M}_{\text{mon}}^k$ can be viewed as the analogue for monopole chains of Donaldson's \cite{donaldson} and Jarvis' \cite{jarvis} maps from moduli spaces of rational maps to moduli spaces of monopoles on $\RR^3$.  Spectral data consist of algebraic curves and line bundles and, like rational maps, are much easier to write down than explicit solutions of \eqref{bog eq}.  This is what makes theorems \ref{thm:KH} and \ref{thm:spectral} useful.  Donaldson's rational maps were used to great effect by Segal and Selby in their work on Sen's conjectures \cite{segalselby}, and similarly our result could perhaps be used to study the cohomology of moduli spaces of monopole chains.

In the second part of this article we use these theorems to study monopole chains with a high degree of symmetry.  We consider cyclic groups $\ZZ_{m}^{(n)}$ generated by the following maps $\RR^2\times S^1\to\RR^2\times S^1$:
\begin{equation}
\RR^2\times S^1 = \CC\times(\RR/\ZZ) \ni (\zeta,\chi+\ZZ)\mapsto \left(e^{2\pi\ii/m}\zeta,\chi+\frac{n}{m}+\ZZ\right).
\end{equation}
By classifying invariant spectral data, we show in corollary \ref{cor:classification} that for each $k>0$ and $0\leq l<k$ there exists a monopole chain of charge $k$ invariant under the action of $\ZZ_{2k}^{(2l)}$.  Assuming that the map $\mathcal{M}_{\text{Higgs}}^k\to\mathcal{M}_{\text{mon}}^k$ is surjective, these are the only $\ZZ_{2k}^{(n)}$-invariant monopole chains in $\mathcal{M}_{\text{mon}}^k$.  We have also been able to construct these monopole chains numerically, and images are presented near the end of this article.

Our work on symmetric monopole chains can be viewed as the analogue for monopole chains of constructions of symmetric monopoles obtained in the 1990s \cite{hmm,hs1,hs2}.  It is also motivated by several papers that constructed cyclic-symmetric monopole chains using ad hoc methods \cite{maldonado1,maldonado2,maldonadoward}.  Our classification includes several new examples of symmetric monopole chains which were not accessible using the methods of these papers.  Our work here parallels recent work of Cork \cite{cork}, who classified cyclic-symmetric calorons (instantons on $\RR^3\times S^1$).  Additional motivation comes from the paper \cite{hw1}, which constructed minimisers of the Skyrme energy on $\RR^2\times S^1$.  These energy minimisers turned out to be invariant under groups of the form $\ZZ_m^{(n)}$.  A well-known heuristic says that skyrmions resemble monopoles, so studying $\ZZ_m^{(n)}$-invariant monopole chains presents a more systematic way to study minimisers of the Skyrme energy on $\RR^2\times S^1$.

We now outline the contents of this article.  In section \ref{sec:2} we describe in detail the Higgs bundles on $\CP^1$ that correspond to monopole chains.  They are particular examples of filtered Higgs bundles.  Theorem \ref{thm:KH}, which relates these to Hitchin's equations on the cylinder, is a particular case of a Kobayashi--Hitchin correspondence estabished in \cite{mochizuki2011}.  To keep this article relatively self-contained we given an independent proof of parts of this theorem in an appendix.

In section \ref{sec:3} we prove theorem \ref{thm:spectral}, which relates filtered (or parabolic) Higgs bundles on $\CP^1$ to spectral curves equipped with parabolic line bundles.  Although most of this material is fairly standard in the Higgs bundle literature, the inclusion of parabolic structures may not be.

In section \ref{sec:4} we prove our main result, theorem \ref{thm:cyclic spectral}, which leads to a classification (in corollary \ref{cor:classification}) of monopole chains invariant under actions of $\ZZ_{2k}^{(n)}$.  This section also includes a discussion of various groups which act naturally on the moduli spaces.  The proof of theorem \ref{thm:cyclic spectral} uses standard tools, such as the Abel--Jacobi map, but is technically rather intricate.

In section \ref{sec:5} we write down the Higgs bundles that correspond to these symmetric monopole chains.  Although the existence of these Higgs bundles is guaranteed by theorems \ref{thm:spectral} and \ref{thm:cyclic spectral}, writing them down explicitly does not appear to be straightforward.  From these explicit Higgs bundles we have been able to construct the associated monopole chains through numerical implementations of the Kobayashi--Hitchin correspondence and Nahm transform.  Pictures of these monopole chains are presented at the end of section \ref{sec:5}.

\section{Parabolic structures and Higgs bundles}
\label{sec:2}

In this section we describe parabolic Higgs bundles associated with monopole chains.  We begin by defining and describing basic properties of parabolic bundles.  Although this material is standard, our presentation is not, and readers are encouraged to review this even if they are familiar with parabolic bundles.  We then describe in some detail the parabolic Higgs bundles relevant to monopole chains, and finally prove a Kobayashi--Hitchin correspondence relating these to monopole chains.

\subsection{Parabolic vector bundles}
% Definitions: parabolic structure, parabolic bundle, compatible metric.
\begin{definition}
Let $E\to \RS$ be a rank $k$ holomorphic vector bundle over a Riemann surface $\RS$ and let $P\in\RS$.  A \emph{parabolic structure at $P$} is a filtration of the fibre at $P$,
\begin{equation}
0 = E_P^{k_P}\subset E_P^{k_P-1} \subset \ldots\subset E_P^1 \subset E_P^0 = E_P,
\end{equation}
together with real numbers $\alpha_P^{k_P-1},\ldots,\alpha_P^0$ (called \emph{weights}) satisfying
\begin{equation}
\label{alpha order}
\alpha_P^0+1>\alpha_P^{k_P-1}>\alpha_P^{k_P-2}>\ldots>\alpha_P^0.
\end{equation}
A parabolic structure is called \emph{full} if $k_P=k$, and in this case the quotient spaces $E_P^i/E_P^{i+1}$ are all one-dimensional.  A \emph{framed parabolic vector bundle} consists of a holomorphic vector bundle $E\to\RS$ together with parabolic structures at a finite set $\mathcal{P}$ of points.

A local holomorphic frame $e_0,\ldots,e_{k-1}$ near a parabolic point $P$ is said to be \emph{compatible} if, for each $0\leq i<k_P$, there exists integers $m_P^i$ such that $E_P^i=\operatorname{span}\{e_j(P)\::\:m_P^i\leq j<k\}$.  In the particular case of a full parabolic structure, this means that $E_P^i = \operatorname{span}\{e_{i}(P),e_{i+1}(P),\ldots,e_{k-1}(P)\}$.  A holomorphic trivialisation is called compatible if it is induced by a compatible frame.
\end{definition}

The sheaf of holomorphic sections of a framed parabolic vector bundle $E$ will be denoted $\mathcal{E}$, and for any parabolic point $P$ the sheaf of holomorphic sections $\sigma$ such that $\sigma(P)\in E^i_P$ will be denoted $\mathcal{E}^{iP}$.

\begin{definition} Let $E$ be a framed parabolic vector bundle.  A hermitian metric $h$ on $E\setminus \mathcal{P}$ is said to be \emph{compatible with the holomorphic structure} if near each parabolic point $P$ there exists a holomorphic coordinate $z$ on a neighbourhood $U$ of $P$ with $z(P)=0$ such that for all $0<i\leq k$ and all $\alpha^{i-1}_P<\alpha\leq \alpha^i_P$,
\begin{equation}
\label{metric and weights}
\big\{ s\in\mathcal{E}|_U\::\: h(s,s) = O(|z|^{2\alpha})\big\} = \mathcal{E}^{iP}|_U.
\end{equation}
\end{definition}

Parabolic structures are a way to encode monodromy of a connection.  To see this, consider the singular hermitian metric on the trivial rank $k$ vector bundle over $\CC$ defined by the matrix
\begin{equation*}
h = \diag(|z|^{2\alpha_0^0},|z|^{2\alpha_0^1},\ldots,|z|^{2\alpha_0^{k-1}}).
\end{equation*}
This is compatible with the parabolic structure at zero in which $E_0^i$ consists of vectors whose first $i$ entries vanish.  The Chern connection of this metric is
\begin{equation*}
A = h^{-1}\pa h = \frac{\dd z}{z}\diag(\alpha_0^0,\alpha_0^1,\ldots,\alpha_0^{k-1}).
\end{equation*}
The gauge transformation $g=\diag(|z|^{-\alpha_0^0},\ldots,|z|^{-\alpha_0^{k-1}})$ brings us to a unitary gauge, in which
\begin{equation*}
A \mapsto g^{-1}\dd g + g^{-1}Ag = \ii\dd\theta \diag(\alpha_0^0,\alpha_0^1,\ldots,\alpha_0^{k-1}),
\end{equation*}
with $\theta=\arg z$.  In this form the connection clearly has holonomy around $z=0$ described by $\alpha_0^i$.

Given a framed parabolic bundle $E$ with parabolic structure at $P$, we define sheaves $\mathcal{E}^{nP}$ for all $n\in\ZZ$ as follows.  Let $U\subset\RS$ be an open set.  If $P\notin U$ set $\mathcal{E}^{nP}|_U=\mathcal{E}_U$.  If $P\in U$, let $z_P$ be a local coordinate that vanishes at $P$, and let $q,r$ be integers with $0\leq r<k_P$ such that $n=qk_P+r$.  Let $\mathcal{E}^{nP}|_U$ be the set of meromorphic sections $\sigma$ of $E$ over $U$ such that $z_P^{\;-q}\sigma$ is holomorphic near $P$ and $(z_P^{\;-q}\sigma)(P)\in E^r_P$.  Then there is a filtration
\begin{equation}
\cdots \mathcal{E}^{(k_P+1)P}\subset \mathcal{E}^{k_P P}\subset \mathcal{E}^{(k_P-1)P}\subset\cdots\subset \mathcal{E}^{1P}\subset\mathcal{E}\subset\mathcal{E}^{-1P}\subset\cdots.
\end{equation}
We also define associated weights $\alpha_P^n = \alpha_P^r+q$.  Then
\begin{equation}
\ldots>\alpha_P^{k_P+1}>\alpha_P^{k_P}>\alpha_P^{k_P-1}>\ldots> \alpha_P^1>\alpha_P^0>\alpha_P^{-1}>\ldots,
\end{equation}
If $h$ is any compatible metric and $U\subset\RS$ is an open set containing $P$ then $\mathcal{E}^{nP}|_U$ is the set of holomorphic sections $s$ of $E|_{U\setminus\{P\}}$ such that $h(s,s)=O(|z_P|^{2\alpha_P^n})$.

%Each of the sheaves $\mathcal{E}^{nP}$ is locally free and of the same rank as $\mathcal{E}$.  To show this, let $P'$ be any point in the Riemann surface.  If $P'\neq P$ one can choose an open neighbourhood $U$ of $P'$ that doesn't contain $P$; then $\mathcal{E}^{nP}|_U\cong \mathcal{E}|_U\cong (\mathcal{O}_U)^{\oplus k}$, where $k$ is the rank of $E$.  If $P'=P$ suppose for notational simplicity that the parabolic structure at $P$ is full, and choose a compatible holomorphic frame $e_0,\ldots,e_{k-1}$ on an open neighbourhood $U$ of $P$.  Any section $\sigma\in\mathcal{E}^{nP}|_U$ can be written uniquely as $\sum_{i=0}^{r-1}z_P^{\;q+1}\sigma_ie_i+\sum_{i=r}^{k-1} z_P^{\;q}\sigma_ie_i$ for holomorphic functions $\sigma_i$, so again $\mathcal{E}^{nP}_U\cong (\mathcal{O}_U)^{\oplus k}$.

Each of the sheaves $\mathcal{E}^{nP}$ is locally free\footnote{For example, if $E\to\CC$ has a full parabolic structure at $z=0$ and $e_0,\ldots,e_{k-1}$ is a compatible frame near $z=0$, then $e_1,\ldots,e_{k-1},\frac{1}{z} e_0$ is a frame for $\mathcal{E}^{1P}$.} and of the same rank as $\mathcal{E}$.  The holomorphic vector bundle $F$ defined by the locally free sheaf $\mathcal{E}^{nP}$, such that $\mathcal{F}\cong\mathcal{E}^{nP}$, is called the \emph{twist} of $E$ by $nP$.  This bundle carries a natural parabolic structure at $P$ such that
\begin{equation}
\mathcal{F}^{mP} \cong \mathcal{E}^{(n+m)P}\quad \forall m\in\ZZ
%,\quad 0\leq m\leq k_P.
\end{equation}
The associated weights are
\begin{equation}
\label{twist weights}
\beta_P^m = \alpha_P^{n+m}.
\end{equation}
Since by definition $\mathcal{F}|_{\RS\setminus\{P\}}=\mathcal{E}|_{\RS\setminus\{P\}}$, the bundles $F|_{\RS\setminus\{P\}}$ and $E|_{\RS\setminus\{P\}}$ are canonically isomorphic.  In the case of line bundles $L$ twisting is equivalent to tensoring with the line bundle associated with the divisor $-nP$ and adding $n$ to the parabolic weights $\alpha_P^i$; we use the notation $\mathcal{L}^{nP}=\mathcal{L}[-nP]$ in this case.

Similarly, for any divisor $\sum_{P\in\mathcal{P}}n_P P$ supported in the set $\mathcal{P}$ of parabolic points, one can define a twist $\mathcal{E}^{\sum_P n_P P}$ of $\mathcal{E}$. Twisting defines an action of the free abelian group $\ZZ^{\mathcal{P}}$ generated by the parabolic points on the moduli space of framed parabolic vector bundles, and the associated equivalence classes are called \emph{unframed parabolic vector bundles}.  An unframed parabolic vector bundle has a unique representative such that the parabolic weights all lie in the interval $[0,1)$, so unframed parabolic vector bundles can equivalently be defined as framed parabolic vector bundles satisfying this constraint.  Most literature refers to what we have called unframed parabolic vector bundles as ``filtered sheaves'' \cite{mochizuki2011,mochizuki} or simply ``parabolic vector bundles''; we have introduced the distinction between framed and unframed bundles for later convenience.

If $h$ is a hermitian metric on $E|_{\RS\setminus\{\mathcal{P}\}}$ and $F$ is a twist of $E$ then $h$ induces a metric on $F|_{\RS\setminus\{\mathcal{P}\}}$, because $F|_{\RS\setminus\{\mathcal{P}\}}\cong E|_{\RS\setminus\{\mathcal{P}\}}$ canonically.  The weights \eqref{twist weights} have been defined in such a way that $h$ is compatible with the parabolic structure on $F$ if and only if it is compatible with the parabolic structure on $E$.

%The dual $E^\ast$ of a parabolic vector bundle $E$ is also a parabolic vector bundle.  The filtration at a parabolic point $P$ is given by
%\begin{equation}
%\label{dual filtration}
%(E_P^\ast)^i := \{ v\in E_P^\ast\::\:v(u)=0\;\forall u\in E_P^{k_P-i} \},
%\end{equation}
%and the associated weights are $(\alpha_P^\ast)^i=-\alpha_P^{k_P-1-i}$.  It follows from this definition that $(\mathcal{E}^\ast)^{nP}\cong(\mathcal{E}^{-nP})^\ast$ $\forall n\in\ZZ$, and that if $h$ is any compatible hermitian metric on $E$ then the dual metric on $E^\ast$ is compatible with the dual parabolic structure.

An important invariant of framed parabolic vector bundles is the \emph{parabolic degree}, defined by
\begin{equation}
\pardeg(E) = \deg(E)+\sum_{P\in\mathcal{P}}\sum_{i=0}^{k_P-1}\alpha_P^i\dim(E_{i}/E_{i+1}).
\end{equation}
This is invariant under twist, so descends to an invariant of unframed parabolic vector bundles.

\subsection{Parabolic Higgs bundles and Hitchin's equations}

To construct monopole chains we will need a parabolic vector bundle $E$ and a meromorphic section $\phi$ of $\mathrm{End}(E)$.  The section $\phi$ will have prescribed behaviour at the parabolic points, and the next proposition summarises this behaviour.
\begin{proposition}\label{prop:HBC}
Let $E$ be a framed parabolic vector bundle of rank $k$ with full parabolic structure at a parabolic point $P$.  Let $\phi$ be a meromorphic section of $\mathrm{End}(E)$ which is holomorphic away from the parabolic points.  Then the following are equivalent:
\begin{enumerate}[(i)]
\item\label{HBC1} $\phi$ induces isomorphisms of stalks $(\mathcal{E}^{nP})_P\to(\mathcal{E}^{(n-1)P})_P$ for all $n\in\ZZ$.
\item\label{HBC2} $\phi$ has a simple pole at $P$.  The residue $\mathrm{Res}_P\phi$ of $\phi$ at $P$ is a surjective map $E_P^0\to E_P^{k-1}$, and $\phi$ induces isomorphisms $E_P^i\to E_P^{i-1}/E_P^{k-1}$ for $0<i<k$.
\item\label{HBC3} Let $z$ be any local holomorphic coordinate that vanishes at $P$.  There is a compatible local holomorphic frame $e_0,\ldots,e_{k-1}$ for $E$ such that the matrix of $\phi$ with respect to this frame takes the form
\begin{equation}
\label{eq:HBC3}
\phi = \left( \begin{array}{c|ccc} -f_{k-1}(z) & & &  \\ \vdots & & \mathrm{Id}_{k-1} & \\ -f_{1}(z) & & & \\ \hline c/z-f_0(z) & 0& \cdots&0 \end{array} \right)
\end{equation}
for some holomorphic functions $f_i(z)$ and non-zero $c\in\CC$.
\end{enumerate}
\end{proposition}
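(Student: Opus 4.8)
The plan is to treat the explicit form \eqref{eq:HBC3} as the concrete anchor and to establish the two equivalences (i)$\Leftrightarrow$(iii) and (ii)$\Leftrightarrow$(iii); since each condition is then equivalent to (iii), all three are equivalent. Every argument is local near $P$: I fix a holomorphic coordinate $z$ with $z(P)=0$ and work in the stalk $\mathcal{E}_P$, a free module of rank $k$ over the local ring $\mathcal{O}_P$ of germs of holomorphic functions at $P$. For a compatible frame $e_0,\dots,e_{k-1}$ (so that $E_P^i=\operatorname{span}\{e_i(P),\dots,e_{k-1}(P)\}$) and $n=qk+r$ with $0\le r<k$, the stalk $(\mathcal{E}^{nP})_P$ is the free $\mathcal{O}_P$-module generated by $z^{q+1}e_0,\dots,z^{q+1}e_{r-1},z^q e_r,\dots,z^q e_{k-1}$; this explicit generating set is what I will feed into each computation.

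The implications out of (iii) are direct verifications. Reading off \eqref{eq:HBC3} in the frame $e_0,\dots,e_{k-1}$ gives $\phi(e_j)=e_{j-1}$ for $1\le j<k$ and $\phi(e_0)=-\sum_{i<k-1}f_{k-1-i}e_i+(c/z-f_0)e_{k-1}$, so $\phi$ has a simple pole whose residue is the rank-one endomorphism sending $e_0(P)\mapsto c\,e_{k-1}(P)$ and killing $e_1(P),\dots,e_{k-1}(P)$; thus $\mathrm{Res}_P\phi$ is onto $E_P^{k-1}$ with kernel $E_P^1$, and the relations $\phi(e_i(P))\equiv e_{i-1}(P)\bmod E_P^{k-1}$ exhibit the isomorphisms $E_P^i\to E_P^{i-1}/E_P^{k-1}$ of (ii). For (i), I apply $\phi$ to the generators of $(\mathcal{E}^{nP})_P$ listed above; $z$-equivariance together with $(\mathcal{E}^{(n+k)P})_P=z(\mathcal{E}^{nP})_P$ reduces the claim to finitely many levels $n$, and at each level the matrix of $\phi$ between the two generating sets agrees with a permutation matrix except in one column, whose distinguished entry reduces to $c\neq0$ at $P$, so the determinant is a unit of $\mathcal{O}_P$ and each $\phi\colon(\mathcal{E}^{nP})_P\to(\mathcal{E}^{(n-1)P})_P$ is an isomorphism.

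For (i)$\Rightarrow$(iii) the construction is clean because (i) lets me \emph{invert} $\phi$ along the filtration. I pick a holomorphic $e_0$ with $e_0(P)\notin E_P^1$ and set $e_j:=\phi^{-j}(e_0)$, using the isomorphisms $\phi\colon(\mathcal{E}^{jP})_P\to(\mathcal{E}^{(j-1)P})_P$; then $e_j\in(\mathcal{E}^{jP})_P$ is holomorphic with $e_j(P)\in E_P^j$, and $\phi(e_j)=e_{j-1}$ by construction. Passing to the one-dimensional graded pieces $\operatorname{gr}_n=(\mathcal{E}^{nP})_P/(\mathcal{E}^{(n+1)P})_P$, on which $\phi$ induces isomorphisms $\operatorname{gr}_n\to\operatorname{gr}_{n-1}$, the nonvanishing of $e_0(P)$ in $\operatorname{gr}_0$ propagates to give $e_j(P)\notin E_P^{j+1}$ for all $j$; hence $e_0,\dots,e_{k-1}$ is a compatible frame. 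The columns $\phi(e_j)=e_{j-1}$ fill the $\mathrm{Id}_{k-1}$ block, while $\phi(e_0)\in(\mathcal{E}^{-1P})_P$ has holomorphic coordinates in the generators $e_0,\dots,e_{k-2},z^{-1}e_{k-1}$, which is exactly the shape \eqref{eq:HBC3}; the coefficient $c$ is nonzero because level $0$ of (i) forces the generator $z^{-1}e_{k-1}$ to lie in the image.

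The last implication (ii)$\Rightarrow$(iii) is the main obstacle, since now I may only push $\phi$ \emph{forward} and no longer have isomorphisms of stalks to invert. I start from a holomorphic $e_{k-1}$ spanning $E_P^{k-1}$ and set $e_{k-1-j}:=\phi^{j}(e_{k-1})$. Holomorphicity is preserved at each step: the residue vanishes on $E_P^m$ for $m\ge1$ (here I use that well-definedness of the induced maps forces $\ker\mathrm{Res}_P\phi=E_P^1$, by a dimension count against surjectivity onto $E_P^{k-1}$), so applying $\phi$ to a section with value in $E_P^m$, $m\ge1$, stays holomorphic, and the simple pole materialises only at the final step $\phi(e_0)$ with $e_0(P)\notin E_P^1$, producing the $c/z$ entry with $c\neq0$. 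The delicate point is that the induced maps control values only modulo the top space $E_P^{k-1}$, so tracking single vectors does not obviously yield a full flag. I resolve this by a downward induction proving that $e_i(P),\dots,e_{k-1}(P)$ is a basis of $E_P^i$: applying the isomorphism $E_P^i\to E_P^{i-1}/E_P^{k-1}$ to this entire basis produces a basis of $E_P^{i-1}/E_P^{k-1}$, which together with the nonzero vector $e_{k-1}(P)\in E_P^{k-1}$ assembles into a basis of $E_P^{i-1}$. This yields the compatible frame, and the matrix of $\phi$ in it is \eqref{eq:HBC3}.
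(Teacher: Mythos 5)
Your proof is correct. It differs from the paper's mainly in logical organization: the paper proves the three-step cycle (i)$\Rightarrow$(ii)$\Rightarrow$(iii)$\Rightarrow$(i), whereas you hub all three conditions through (iii), proving four implications. The overlap is in (ii)$\Rightarrow$(iii): your construction $e_{k-1-j}=\phi^{j}(e_{k-1})$, the use of $\ker\mathrm{Res}_P\phi=E_P^1$ to preserve holomorphicity at each step, and the downward induction assembling $e_i(P),\dots,e_{k-1}(P)$ into a basis of $E_P^i$ are exactly the paper's argument --- indeed you are more explicit than the paper about why well-definedness of the induced maps forces the residue to kill $E_P^1$. Your (iii)$\Rightarrow$(i) shows the matrix of $\phi$ between generating sets of consecutive stalks has unit determinant, where the paper instead exhibits the inverse matrix explicitly; these are equivalent in substance. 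The genuinely new pieces are your direct (iii)$\Rightarrow$(ii) and your (i)$\Rightarrow$(iii) via $e_j=\phi^{-j}(e_0)$ and the one-dimensional graded pieces $(\mathcal{E}^{nP})_P/(\mathcal{E}^{(n+1)P})_P$; the paper avoids both by proving (i)$\Rightarrow$(ii) instead, identifying $E_P^i$ with the stalk quotient $(\mathcal{E}^{iP})_P/(\mathcal{E}^{kP})_P$ and passing the stalk isomorphisms to quotients. The trade-off: the paper's cycle needs only one frame construction, so it is more economical; your route pays for a second construction, but the inverse-based one is cleaner (invertibility on stalks lets nonvanishing propagate through the graded pieces, with none of the ``modulo $E_P^{k-1}$'' bookkeeping that (ii)$\Rightarrow$(iii) requires), and it ties each of (i) and (ii) directly to the concrete normal form \eqref{eq:HBC3}.
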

\begin{proof}
First we show (\ref{HBC1}) implies (\ref{HBC2}).  Choose a holomorphic coordinate $z$ that vanishes at $P$.  Since $z\phi(\mathcal{E}_P)\subseteq (\mathcal{E}^{(k-1)P})_P\subseteq \mathcal{E}_P$, $z\phi$ maps local holomorphic sections to local holomorphic sections, and so $\phi$ has a simple pole.  The filtration of the fibre of $E$ at $P$ is recovered from the stalks of the sheaves $\mathcal{E}^{nP}$ by setting $E_P^i = (\mathcal{E}^{iP})_P/(\mathcal{E}^{kP})_P$ for $0\leq i\leq k$.  Since $\phi$ induces isomorphisms $(\mathcal{E}^{nP})_P\to(\mathcal{E}^{(n-1)P})_P$ it also induces isomorphisms $E_P^i = (\mathcal{E}^{iP})_P/(\mathcal{E}^{kP})_P \to (\mathcal{E}^{(i-1)P})_P/(\mathcal{E}^{(k-1)P})_P=E_P^{i-1}/E_P^{k-1}$.  Finally, $z\phi$ induces a map $E_P^0 = (\mathcal{E}^{0P})_P/(\mathcal{E}^{kP})_P \to (\mathcal{E}^{(k-1)P})_P/(\mathcal{E}^{(2k-1)P})_P\to(\mathcal{E}^{(k-1)P})_P/(\mathcal{E}^{kP})_P=E_P^{k-1}$ which is a composition of surjections, hence the residue of $\phi$ is a surjective map $E_P^0\to E_P^{k-1}$.

Next we show (\ref{HBC2}) implies (\ref{HBC3}).  Let $e_{k-1}$ be a local non-vanishing holomorphic section of $E$ such that $e_{k-1}(P)\in E_P^{k-1}$.  Let $e_{k-1-i}=\phi^ie_{k-1}$ for $0< i<k$.  Since $\phi(E_P^i)\subset E_P^{i-1}$, $e_i$ are all holomorphic in a neighbourhood of $P$ and $e_i(P)\in E_P^i$.  Since the maps $\phi:E_P^i\to E_P^{i-1}/E_P^{k-1}$ are isomorphisms, $e_i(P),\ldots,e_{k-1}(P)$ form a basis for $E_P^i$ and moreover $e_0,\ldots,e_{k-1}$ form a local frame for $E$.  Since $\mathrm{Res}_P\phi:E_P^0\to E_P^{k-1}$ is surjective and $\mathrm{Res}_P\phi(e_i)=0$ for $0<i<k$, it must be that $\mathrm{Res}_P(e_0)=ce_{k-1}$ for some $c\neq 0$.  It follows that the matrix of $\phi$ has the stated form with respect to this frame.

Finally we show (\ref{HBC3}) implies (\ref{HBC1}).  It is clear from the matrix form of $\phi$ that $\phi((\mathcal{E}^{nP})_P)\subseteq (\mathcal{E}^{(n-1)P})_P$.  To show that the map $\phi:(\mathcal{E}^{nP})_P\to(\mathcal{E}^{(n-1)P})_P$ is an isomorphism we just need to exhibit an inverse.  The inverse of the matrix given for $\phi$ is
\begin{equation}
\label{phi normal}
\phi^{-1} = \left( \begin{array}{ccc|c} 0&\cdots&0&z(c-zf_0(z))^{-1} \\ \hline & & & zf_{k-1}(z)(c-zf_0(z))^{-1} \\ &\mathrm{Id}_{k-1}&&\vdots \\ &&&zf_1(z)(c-zf_0(z))^{-1} \end{array}\right)
\end{equation}
It is clear that $\phi^{-1}((\mathcal{E}^{(n-1)P})_P)\subseteq (\mathcal{E}^{nP})_P$, and that this map of stalks is the inverse of the map of stalks induced by $\phi$.
\end{proof}
Note that condition (\ref{HBC1}) (and hence conditions (\ref{HBC2}) and (\ref{HBC3})) is invariant under twist: if $\phi^E$ is a section of $\mathrm{End}(E)$ satisfying (\ref{HBC1}) near a parabolic point, and $F$ is a twist of $E$, then the induced section $\phi^F$ of $\mathrm{End}(F)$ also satisfies condition (\ref{HBC1}).

% definition of cylinder higgs bundle
Now we are ready to state the main definition of this section.
\begin{definition}\label{def:cylinder Higgs bundle}
Let $E\to\CP^1$ be a rank $k$ framed parabolic vector bundle with two parabolic points such that $\pardeg(E)=0$.  Suppose that the parabolic structure at each parabolic point $P$ is full and that the weights satisfy
\begin{equation}
\label{PHB weights definition}
\alpha_P^{i}-\alpha_P^{i-1} = \frac{1}{k}\quad\text{for}\quad 0<i<k.
\end{equation}
Let $\phi$ be a meromorphic section of $\End(E)$ which is holomorphic away from the parabolic points and which satisfies any of the three equivalent conditions of proposition \ref{prop:HBC} at each parabolic point.  Then the equivalence class of a pair $(E,\phi)$ under bundle isomorphisms is called a \emph{framed cylinder Higgs bundle}.  An equivalence class of framed cylinder Higgs bundles under twist is called an \emph{unframed cylinder Higgs bundle}.
\end{definition}

The usual approach to Higgs bundles involves a section of $\End(E)\otimes K$, rather than $\End(E)$.  One can easily obtain such a section from our definition by tensoring $\Phi$ with a section of $K_{\CP^1}$ with simple poles at the parabolic points.  The resulting section $\Phi$ will have poles of order 2 at the parabolic points, and the pair $(E,\Phi)$ is an example of a \emph{wild} Higgs bundle.  In the terminology of \cite{mochizuki2011}, it is an example of a \emph{good filtered Higgs bundle} (but not an unramifiedly good filtered Higgs bundle).  From a different perspective, $(E,\phi)$ could be thought of as a \emph{twisted Higgs bundle} (see e.g.\ \cite{bnr,markman}) with parabolic structures (see e.g.\ \cite{konno}).

%Let us compare this definition with the wider literature on Higgs bundles.  Our boundary conditions for $\phi$ imply that at each parabolic point $P$, $\mathrm{Res}_P\phi(E_{i-1})\subseteq E_{i}$ for $0<i<k$.  Higgs bundles with simple poles and residues of this general type are known as \emph{parabolic} Higgs bundles, and were studied for example in \cite{konno}.  An important distinction in our case is that $\phi$ is a section of $\End(E)$ rather than $\End(E)\otimes K_{\CP^1}$.  Higgs bundles where $\phi$ is a section of $\End(E)\otimes L$ for some line bundle $L$ are known as \emph{twisted} Higgs bundles and have been studied for example in \cite{bnr} and \cite{markman}.  Therefore cylinder Higgs bundles are examples of \emph{twisted parabolic} Higgs bundles.

% Hitchin's equations
Before proceeding to describe the relationship between cylinder Higgs bundles and monopole chains, let us fix some conventions.  By choice of coordinate $w$ we identify $\CP^1$ with $\CC\cup\{\infty\}$, and without loss of generality we assume that the two parabolic points are $w=0$ and $w=\infty$.  The boundary condition \eqref{HBC3} implies that $\det\phi$ has simple poles at each parabolic point, so that
\begin{equation}
\label{eq:det phi}
(-1)^k\det\phi = a_0-\frac{c_{0}}{w}-c_\infty w
\end{equation}
for constants $c_0, a_0, c_\infty$.  We may assume without loss of generality that $w$ is chosen such that $c_0=c_\infty=:c$.

Let us fix $\beta>0$.  We obtain from a cylinder Higgs bundle $(E,\phi)$ a Higgs bundle (in the usual sense) $(E|_{\CC^\ast},\Phi)$ over $\CC^\ast=\CP^1\setminus\{0,\infty\}$ by setting $\Phi=\phi\frac{\dd w}{2\beta w}$.  A hermitian metric $h$ on $E$ which is compatible with the parabolic structure is called \emph{hermitian-Einstein} if
\begin{equation}
\label{eq:HE}
\mathcal{F}^h:=F^h+[\Phi,\Phi^{\ast h}] = 0.
\end{equation}
Here $F^h$ denotes the curvature of the Chern connection $A^h$ of $h$ and $\Phi^{\ast h}$ denotes the hermitian conjugate with respect to $h$.  The commutator of 1-forms is understood in a graded sense: $[\Phi,\Phi^{\ast h}]:=\Phi\wedge\Phi^{\ast h}+\Phi^{\ast h}\wedge\Phi$.  It is sometimes convenient to introduce operators $D''=\bar{\pa}+\Phi$ and $D'_h = \pa^h+\Phi^{\ast h}$; then the quantity $\mathcal{F}^h$ defined in \eqref{eq:HE} is equal to $D''D'_h+D'_hD''$, and can be understood as the curvature of the connection $D''+D'_h$.

%Note that the hermitian-Einstein equation and its boundary conditions are invariant under twist: if $h^E$ is a compatible hermitian-Einstein metric on $(E,\phi^E)$ and $(F,\phi^F)$ is a twist of $(E,\phi^E)$ then the induced metric $h^F$ on $F$ is also a compatible hermitian-Einstein metric.  Therefore it makes sense to speak of a compatible hermitian-Einstein metric on an unframed cylinder Higgs bundle.

Given existence of a hermitian-Einstein metric, the hermitian-Einstein equation \eqref{eq:HE} and the condition that $\phi$ is holomorphic may be rewritten in a unitary gauge with respect to the real coordinates $x^1,x^2$ defined by $w=\exp(\beta(x^1+\ii x^2))$ as:
\begin{align}
\label{eq:Hit1}
F_{12} &= \frac{\ii}{2}[\phi,\phi^\dagger] \\
\label{eq:Hit2}
0 &=  \frac{\pa\phi}{\pa x^1} + \ii\frac{\pa\phi}{\pa x^2} + [A_1+\ii A_2,\phi].
\end{align}

These equations are known as Hitchin's equations.  Cherkis and Kapustin \cite{ck} established a bijection between monopole chains and solutions of Hitchin's equations on the cylinder subject to the following conditions:
\begin{enumerate}
\item[H1.] $\Tr(\phi(w)^\alpha)$ for $\alpha=1,\ldots,k-1$ extend to holomorphic functions on $\CP^1$;
\item[H2.] $\det(\phi(w))$ extends to a meromorphic function on $\CP^1$ with simple poles at $0$ and $\infty$;
\item[H3.] $|F_{12}|^2 = O(|x^1|^{-3})$ as $|x^1|\to\infty$.
\end{enumerate}

Solutions of Hitchin's equations satisfying H1, H2, H3 correspond to cylinder Higgs bundles.  More precisely,
\begin{theorem}\label{thm:KH}
\begin{itemize}
\item[(i)] Every unframed cylinder Higgs bundle admits a compatible hermitian-Einstein metric which is  unique up to scaling.
\item[(ii)] The norm of the curvature of the Chern connection of the metric constructed in (i), measured using the cylindrical metric $x^i\dd x^i$ on $\RR\times S^1$, decays faster than any exponential function of $|x^1|$ as $|x^1|\to\infty$.  In particular, the corresponding solution of Hitchin's equations satisfies H1, H2 and H3.
\item[(iii)] Conversely, given any solution of Hitchin's equations on the cylinder satisfying H1, H2, H3, the underlying bundle $E$ and section $\phi$ can be extended to a cylinder Higgs bundle on $\CP^1$.
\end{itemize}
\end{theorem}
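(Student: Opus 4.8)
The plan is to treat Theorem~\ref{thm:KH} as a wild, parabolic Kobayashi--Hitchin correspondence and to prove its three parts separately. Throughout I identify $\RR\times S^1$ with $\CC^\ast$ via $w=\exp(\beta(x^1+\ii x^2))$, so that $|x^1|\to\infty$ corresponds to $w\to 0$ or $w\to\infty$, i.e.\ to approaching the two parabolic points. For part~(i) I would first establish stability. By \eqref{eq:det phi}, and since the companion form \eqref{eq:HBC3} makes every $\Tr\phi^\alpha$ holomorphic, the characteristic polynomial of $\phi$ has bounded symmetric functions except for $\det\phi$, which has a simple pole; the dominant balance $\lambda^k\sim\pm c/w$ shows that the $k$ eigenvalues form a single Puiseux branch $\lambda\sim(c/w)^{1/k}$, totally ramified of order $k$ over each parabolic point. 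Hence the monodromy of the eigenlines about each puncture is a single $k$-cycle, and a proper $\phi$-invariant parabolic subsheaf would have to select a monodromy-invariant proper subset of the $k$ sheets, which is impossible. Thus $(E,\phi)$ is parabolically stable of slope $\pardeg(E)/k=0$, and existence and uniqueness up to scaling of a compatible hermitian--Einstein metric follow from the existence theorem for good filtered Higgs bundles of \cite{mochizuki2011}; for an independent argument one constructs a model metric near each $P$ from the diagonal Ansatz following \eqref{metric and weights}, glues to an approximate solution with rapidly decaying $\mathcal F^{h_0}$, and solves \eqref{eq:HE} by a continuity method in which stability supplies the $C^0$ estimate, uniqueness following from the maximum principle applied to $\mathcal F^{h_1}-\mathcal F^{h_2}$.

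For part~(ii) the decay rate is controlled by the separation of the eigenvalues. As computed above, the eigenvalues of $\phi$ have magnitude $|w|^{-1/k}\sim e^{\beta|x^1|/k}$ near $w=0$ (and $|w|^{1/k}\sim e^{\beta|x^1|/k}$ near $w=\infty$) and, being distinct $k$-th roots, are pairwise separated by the same order. The hermitian--Einstein metric renders $\phi$ asymptotically normal, so it converges to the diagonal model whose Chern connection is flat, exactly as in the computation following \eqref{metric and weights}. Linearising \eqref{eq:HE} about this model, the off-diagonal component $s_{ij}$ of the metric perturbation acquires a position-dependent mass $|\lambda_i-\lambda_j|\sim e^{\beta|x^1|/k}$, and a WKB estimate then forces $s_{ij}$, and hence $F^h$, to decay like $\exp\bigl(-c'e^{\beta|x^1|/k}\bigr)$. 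This doubly exponential rate beats every $e^{-N|x^1|}$, yielding the stated decay and in particular conditions H1--H3.

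For part~(iii) I would run the extension in reverse. A solution of Hitchin's equations on $\CC^\ast$ satisfying H1--H3 is a tame harmonic bundle, so the extension theory for such bundles provides a holomorphic extension of $E$ and $\phi$ across $w=0,\infty$, with parabolic weights read off from the growth rates of the harmonic metric; H3 supplies the curvature decay needed to control the transition functions and to make the limiting parabolic bundle well defined. Conditions H1 and H2 fix the leading behaviour of $\phi$, so that $\det\phi$ has simple poles and the residue has the rank-one surjective structure of Proposition~\ref{prop:HBC}. The full $k$-fold ramification of the eigenvalues forces the weights to be equally spaced by $1/k$ --- most transparently after pulling back along the unramifying cover $w=u^k$, where the weights become integer-spaced and descend with spacing $1/k$ --- while $\mathcal F^h=0$ together with integrability at the punctures integrates, via Chern--Weil with boundary terms, to $\pardeg(E)=0$. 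The extended data is therefore a cylinder Higgs bundle in the sense of Definition~\ref{def:cylinder Higgs bundle}.

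The main obstacle, and the step on which I would concentrate, is the analytic core common to parts~(i) and~(ii): constructing the hermitian--Einstein metric on the non-compact cylinder with an irregular singularity at each end, and then upgrading the convergence to the stated rate. Existence demands weighted estimates adapted to the exponential growth of $\phi$, while the decay must exploit the growing eigenvalue separation --- a position-dependent mass rather than a fixed spectral gap --- which is precisely what produces a doubly exponential, rather than merely exponential, rate. By comparison the stability computation in~(i) and the extension in~(iii), though technically intricate, follow established patterns once the totally ramified cyclic structure of $\phi$ forced by \eqref{eq:HBC3} and \eqref{eq:det phi} has been exploited.
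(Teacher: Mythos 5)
Your architecture largely coincides with the paper's appendix: the stability argument via the cyclic monodromy of the eigenvalues is the same one used there, existence is likewise built on a model metric of the diagonal form \eqref{def:h0} glued in near the punctures (the paper feeds this into Simpson's heat-flow theorem rather than a continuity method), and the decay in (ii) is indeed driven by the growing eigenvalue separation. However, part (ii) of your proposal has a genuine gap: the WKB/linearisation step presupposes that the hermitian-Einstein metric converges to the diagonal model at all, and the position-dependent mass $|\lambda_i-\lambda_j|$ cannot deliver this, because it acts only on the components of the perturbation $\psi$ that fail to commute with $\phi$. The components of $\psi$ along the commutant of $\phi$ --- the span of the powers of the shift matrix $\Sh$ of \eqref{def:shift}, i.e.\ the diagonal directions in your eigenframe --- receive no mass whatsoever from the Higgs term, and nothing in your proposal controls them. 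This is precisely what Proposition \ref{prop:decay 1} supplies: because the parabolic weights are spaced by $1/k$ (equation \eqref{PHB weights definition}), in the quasi-periodic gauge those components satisfy \eqref{eq:psi quasi-periodic} and so carry fractional Fourier frequencies in the circle direction, whence the kinetic term $|\pa_t\psi|^2$ alone gives them a fixed positive mass; this yields a preliminary, plain exponential decay of all of $\psi$, and only then can the growing-mass argument (Proposition \ref{prop:decay 2}) be run. Note also that the super-exponential conclusion holds only for the quantities built from the commutator (hence for $F^h$); the commutant part of $\psi$ itself decays merely exponentially, so the mechanism must be applied to the right objects, as the paper does with its functions $\mathcal{V}_i$.

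In part (iii), calling the solution a \emph{tame} harmonic bundle is incorrect and the step would fail as stated: the eigenvalues of $\Phi=\phi\,\frac{\dd w}{2\beta w}$ blow up like $|w|^{-1-1/k}$, so the punctures are irregular (wild) singularities and Simpson's tame extension theory does not apply; one needs the good wild harmonic bundle theory of \cite{mochizuki2011} (theorem 21.3.1), which is exactly what the paper cites. Your subsequent steps (unramifying cover $w=u^k$, $1/k$-spaced weights, Chern--Weil for $\pardeg(E)=0$) are consistent with that theory, so the repair is routine, but the hypothesis matters. Finally, your uniqueness sketch in (i) is garbled: $\mathcal{F}^{h_1}-\mathcal{F}^{h_2}$ vanishes identically for two hermitian-Einstein metrics, so there is nothing to apply a maximum principle to; the paper instead uses convexity of the Donaldson-Simpson functional along the path $h_t$ interpolating between the two metrics, and in either approach the endgame --- that a holomorphic endomorphism commuting with $\Phi$ must be a constant multiple of the identity --- again requires the cyclic-monodromy argument you invoked for stability, a step your proposal omits.
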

Part (i) of this theorem follows from the fact that every good filtered Higgs bundle admits a wild harmonic metric.  This is proved in \cite{mochizuki} in a rather general setting.  The corresponding statement for \emph{unramifiedly} good filtered Higgs bundles over Riemann surfaces was proved earlier in \cite{biquardboalch}, and the statement for good filtered Higgs bundles over Riemann surfaces is easily deduced from this by taking a ramified covering.  For completeness, we give a direct proof of part (i) of the theorem in an appendix.

Part (ii) of this theorem follows from proposition 7.2.9 of \cite{mochizuki2011}.  Again, a direct proof of this part is given in the appendix to this paper.

Part (iii) of the theorem follows from theorem 21.3.1 of \cite{mochizuki2011}, which states that every good wild harmonic bundle admits a natural filtration.  According to \cite{mochizuki2011}, in the case of Riemann surfaces the proof is simpler than that given in \cite{mochizuki2011} and follows from earlier work of Simpson.

Combined with Cherkis--Kapustin's results on the Nahm transform, theorem \ref{thm:KH} gives a natural correspondence between monopole chains and cylinder Higgs bundles.

\section{Spectral data}
\label{sec:3}

In the previous section we saw that monopole chains correspond to cylinder Higgs bundles, i.e.\ twisted Higgs bundles over $\CP^1$ with prescribed parabolic structures.  In this section we describe how cylinder Higgs bundles correspond to spectral data consisting of curves in $\CP^1\times\CP^1$ equipped with parabolic line bundles.  Spectral curves are a standard feature of the theory of Higgs bundles \cite{hitchin}, and their relevance to monopole chains was already highlighted in \cite{ck}.  The novel contribution of this section is the incorporation of parabolic structures.

Let $(E,\phi)$ be a framed cylinder Higgs bundle.  The associated spectral curve $S\subset\mathbb{CP}^1\times\mathbb{CP}^1$ is defined by the equation
\begin{equation}
\det(\zeta \mathrm{Id}_E -\phi(w)) = 0.
\end{equation}
More precisely, let $\pi_w,\pi_\zeta:\CP^1\times\CP^1\to\CP^1$ be the projections onto the first and second factors, so that $\pi_w(w,\zeta)=w$, $\pi_\zeta(w,\zeta)=\zeta$ for $w,\zeta\in\CC\subset\CP^1$.  Let $z_0,z_1$ be holomorphic sections of $\mathcal{O}(1)\to\CP^1$ such that $z_0/z_1=\zeta$.  Then $S\subset\CP^1\times\CP^1$ is the vanishing set of the section $\det(\pi_\zeta^\ast z_0\mathrm{Id}_E-\pi_\zeta^\ast z_1\pi_w^\ast \phi)$ of $\pi_\zeta^\ast\mathcal{O}(k)$.

Let us consider the form of the spectral curve in more detail.  Near the point $w=0$ we may choose a trivialisation so that $\phi$ takes the form given in equation \eqref{eq:HBC3} (with local coordinate $z=w$).  Then
\begin{equation}
\det(\zeta\mathrm{Id}_E-\phi) = -\frac{c}{w}+\zeta^k+\sum_{i=0}^{k-1}\zeta^if_i(w).
\end{equation}
Thus the coefficient of $\zeta^i$ for $i>0$ is a holomorphic function of $w$ near $w=0$, while the coefficient of $\zeta^0$ has a simple pole.  A similar analysis near $w=\infty$ shows that the coefficients of $\zeta^i$ for $i>0$ are also holomorphic near $w=\infty$, and the coefficient of $\zeta^0$ again has a simple pole.  It follows that the coefficients of $\zeta^i$ are constant functions of $w$ for $i>0$, while the coefficient of $\zeta^0$ is $-c(w+w^{-1})$ plus a constant \footnote{recall that the coefficients of $w$ and $w^{-1}$ were fixed to be equal by our choice of coordinate $w$ -- see the discussion around equation \eqref{eq:det phi}}.  Thus the equation defining the spectral curve takes the form \cite{ck}
\begin{equation}
\label{spectral curve}
-cw-\frac{c}{w}+\zeta^k+\sum_{i=0}^{k-1}a_i\zeta^i = 0
\end{equation}
for constants $a_i\in\CC$.  Observant readers may recognise equation \eqref{spectral curve} as the spectral curve for Toda mechanics\footnote{I am grateful to S.~Ruisenaars for this observation}.  This reflects the fact that, regarded as an integrable system, the moduli space of Higgs bundles on a cylinder is the Toda model \cite{marshakov}.

The spectral curve $S$ is irreducible and hence an integral scheme.  To see this, write the defining equation as a polynomial in $w$ with coefficients in $\CC[\zeta]$:
\begin{equation}
w^2-c^{-1}w\left(\zeta^k+\sum_{i=0}^{k-1}a_i\zeta^i\right)+1.
\end{equation}
This satisfies Eisenstein's criterion, because there exists a degree one polynomial that divides the coefficient of $w$ but does not divide the coefficients of $w^2$ or $w^0$.  Therefore the polynomial is irreducible, and $S$, which equals the closure in $\CP^1\times\CP^1$ of the associated affine variety, is also irreducible.

In fact, for generic values of the coefficients $a_i$ the spectral curve is nonsingular.  The map $S\to\CP^1$ given by $(w,\zeta)\mapsto\zeta$ is two-to-one, so this curve is hyperelliptic.  Its genus is $k-1$ \cite{ck}.  For all values of $a_i$ the curve contains the points $P_0,P_\infty$ with coordinates $(w,\zeta)=(0,\infty)$ and $(\infty,\infty)$.

In cases where $S$ is a nonsingular curve it carries a natural line bundle $L$.  This is defined to be the cokernel of the map
\begin{equation}
\pi_\zeta^\ast z_0\mathrm{Id}_E-\pi_\zeta^\ast z_1\pi_w^\ast \phi:\pi_w^\ast E\otimes\pi_\zeta^\ast\mathcal{O}(-1)\to \pi_w^\ast E.
\end{equation}
The fibre of $L$ at a point $(w,\zeta)\in S\cap\CC^\ast\times\CC$ is then
\begin{equation}
L_w = E_w/\operatorname{Im}\big(\phi(w)-\zeta\mathrm{Id}:E_w\to E_w\big).
\end{equation}
The sheaf of holomorphic sections of $L$ is denoted $\mathcal{L}$.  If $S$ is singular there is no line bundle but one still has a torsion free invertible sheaf $\mathcal{L}$.

The bundle $E$ and endomorphism $\phi$ can be recovered from the spectral data \cite{hitchin,bnr}.  Let $\pi:S\to\CP^1$ be the $k$-to-one map $\pi(w,\zeta)=w$.  The push-forward $\pi_\ast(\mathcal{L})$ is a locally free and of rank $k$, so defines a vector bundle over $\CP^1$.  Now any holomorphic section of $E$ over an open set $U\subset\CP^1$ determines a section of $\pi^\ast E$ over $\pi^{-1}(U)\subset S$ via pull-back, and hence determines a section of $L$ over $\pi^{-1}(U)$ via the map $E\to L$.  Thus there is a natural map
\begin{equation}
\mathcal{E} \to \pi_\ast(\mathcal{L}).
\end{equation}
This map induces an isomorphism from $E$ to the bundle determined by $\pi_\ast(\mathcal{L})$ \cite{hitchin,bnr}, so $E$ can be recovered from the spectral data.  The endomorphism of $L|_{S\setminus\{P_0,P_\infty\}}$ determined by multiplication with $\zeta$ induces an endomorphism of $\mathcal{E}|_{\CC^\ast}\cong \pi_\ast(\mathcal{L}|_{S\setminus\{P_0,P_\infty\}})$ which agrees with $\phi$, so $\phi$ can also be recovered from the spectral data.

Note that the preceding construction differs from that of \cite{hitchin}, which instead takes $L$ to be the kernel of $L$.  The two approaches are however related, as explained in \cite{bnr}.

Having described the relationship between $(E,\phi)$ and $(S,L)$, we now introduce parabolic structures on $L$ and explain how the parabolic structures on $E$ can be recovered from them.  Our construction is similar to one studied in \cite{ab}.  First we introduce filtrations on $L$ at the points $P=P_0,P_\infty$ by setting
\begin{equation}
\mathcal{L}^{iP} = \mathcal{L}[-iP].
\end{equation}
This induces filtrations on $\pi_\ast\mathcal{L}$ by setting
\begin{equation}
(\pi_\ast\mathcal{L})^{i\pi(P)} := \pi_\ast(\mathcal{L}[-iP]).
\end{equation}
The function $\tilde{\zeta}=1/\zeta$ on $S$ has zeros of order 1 at the points $P=P_0,P_\infty$, so multiplying sections with $\tilde{\zeta}$ induces surjections of stalks,
\begin{equation}
\tilde{\zeta}:\mathcal{L}[-iP]_P\to\mathcal{L}[-(i+1)P]_P
\end{equation}
Therefore we have an isomorphism of stalks,
\begin{equation}
((\pi_\ast\mathcal{L})^{i\pi(P)})_{\pi(P)}=\pi_\ast(\tilde{\zeta}^i\mathcal{L})_{\pi(P)},\quad i\geq 0.
\end{equation}
On the other hand, we know from the boundary conditions of $\phi$ that the stalk $(\mathcal{E}^{i\pi(P)})_{\pi(P)}$ of $\mathcal{E}^{i\pi(P)}$ is equal to the image of $\mathcal{E}_P$ under $\phi^{-i}$.  Since the endomorphism $\phi^{-1}$ corresponds to multiplying with $\tilde{\zeta}$ we conclude that the filtration $\pi_\ast(\mathcal{L}[-iP])$ agrees with the filtration $\mathcal{E}^{i\pi(P)}$.  Thus the filtration of $E$ can be recovered from its spectral data.

We have seen that $\mathcal{L}$ has natural filtrations at $P_0$ and $P_\infty$ corresponding to the filtrations of $\mathcal{E}$.  In order to define parabolic structures at these points we introduce weights
\begin{equation}
\label{PHB weights explicit}
\alpha_P^0 = k \alpha_{\pi(P)}^0,\quad P=P_0,P_\infty.
\end{equation}
The weights of $\mathcal{E}$ can easily be recovered from the weights of $\mathcal{L}$ using the formulae
\begin{equation}
\alpha_{\pi(P)}^i = \frac{\alpha_P^0+i}{k},\quad 0\leq i<k.
\end{equation}
The following proposition shows that this association of parabolic weights on $\mathcal{E}$ and $\mathcal{L}$ is natural:

\begin{proposition}
Let $h$ be a compatible hermitian metric on $L\to S$ and let $\mathcal{E}=\pi_\ast\mathcal{L}$.  For any open set $U\subset\CC^\ast$ and section $\sigma\in\pi_\ast\mathcal{L}|_{\pi^{-1}(U)}$ define a function
\begin{equation}
\pi_\ast h(\sigma,\sigma):U\to\RR,\quad \pi_\ast h(\sigma,\sigma)(z) = \sum_{(w,\zeta)\in\pi^{-1}(\{w\})} h(\sigma,\sigma)(w,\zeta).
\end{equation}
Then $\pi_\ast h$ defines a hermitian metric on $E$ near the parabolic points which is compatible with the parabolic structures.
\end{proposition}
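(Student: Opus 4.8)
The statement has two parts. Away from the branch locus of $\pi$ there is nothing to the metric claim: for such $w$ the fibre $E_{w}=(\pi_\ast\mathcal L)_{w}$ is the direct sum $\bigoplus_{p\in\pi^{-1}(w)}L_{p}$, and by construction $\pi_\ast h$ is the orthogonal direct-sum metric, which is positive definite and varies smoothly with $w$. The real content is compatibility near $Q:=\pi(P)$ for $P\in\{P_0,P_\infty\}$, and I would prove it by verifying the characterisation $\mathcal E^{iQ}|_U=\{\sigma\in\mathcal E|_U:\pi_\ast h(\sigma,\sigma)=O(|w|^{2\alpha_Q^{i}})\}$ which is equivalent to \eqref{metric and weights}, using the coordinate $w$ at $P_0$ and $1/w$ at $P_\infty$, with $\alpha_Q^{i}=(\alpha_P^0+i)/k$ as in \eqref{PHB weights explicit}.

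The first step is the local geometry of $\pi$ at $P$. Writing $\tilde\zeta=1/\zeta$ and substituting into \eqref{spectral curve} gives $w=c\,\tilde\zeta^{\,k}(1+O(\tilde\zeta))$ near $P_0=(0,\infty)$, so $\tilde\zeta$ is a local coordinate on $S$ and $\pi$ is totally ramified of order $k$ at $P$ (and likewise at $P_\infty$). Taking a $k$-th root I may replace $\tilde\zeta$ by a local coordinate $s$, a unit multiple of $\tilde\zeta$, with $w=s^{k}$, so that $\pi^{-1}(w)=\{\omega^{j}w^{1/k}:0\le j<k\}$ where $\omega=e^{2\pi\ii/k}$. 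Fix a holomorphic frame $e$ of $L$ near $P$; since $L$ carries the single weight $\alpha_P^0$, compatibility of $h$ gives both the upper bound $h(e,e)=O(|s|^{2\alpha_P^0})$ and the one-sided lower bound that $h(e,e)$ is not $O(|s|^{2\alpha})$ for any $\alpha>\alpha_P^0$. A holomorphic section of $\mathcal E=\pi_\ast\mathcal L$ near $Q$ is $\sigma=f(s)\,e$ for a holomorphic $f$, and $\sigma\in\mathcal E^{iQ}=\pi_\ast(\mathcal L[-iP])$ exactly when $\nu:=\operatorname{ord}_{s=0}f\ge i$.

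The two inclusions then come from matching orders of vanishing. For the decay direction, each sheet contributes $|f(\omega^{j}w^{1/k})|^{2}h(e,e)(\omega^{j}w^{1/k})=O(|w|^{2\nu/k})\,O(|w|^{2\alpha_P^0/k})=O(|w|^{2\alpha_Q^{\nu}})$, so $\pi_\ast h(\sigma,\sigma)=O(|w|^{2\alpha_Q^{\nu}})\subseteq O(|w|^{2\alpha_Q^{i}})$ whenever $\nu\ge i$; this is made transparent by the root-of-unity identity
\[
\sum_{j=0}^{k-1}\bigl|f(\omega^{j}w^{1/k})\bigr|^{2}=k\sum_{r=0}^{k-1}|w|^{2r/k}\,|g_r(w)|^{2},
\]
with $g_r(w)=\sum_{q\ge0}c_{qk+r}w^{q}$, which displays the splitting of $\pi_\ast h$ into $k$ graded pieces carrying weights $\alpha_Q^{0},\dots,\alpha_Q^{k-1}$ and thereby explains the rescaling $\alpha_P^0=k\alpha_Q^0$. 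For the converse, suppose $\nu<i$ and fix $\alpha\in(\alpha_Q^{i-1},\alpha_Q^{i}]$, so $\alpha>\alpha_Q^{\nu}$. Choosing, for a suitably small $\epsilon>0$, a sequence $s^{(m)}\to0$ along which $h(e,e)/|s|^{2\alpha_P^0+2\epsilon}$ is unbounded (possible since $h(e,e)$ is not $O(|s|^{2\alpha_P^0+2\epsilon})$) and setting $w^{(m)}=(s^{(m)})^{k}$, positivity of the fibre sum gives $\pi_\ast h(\sigma,\sigma)(w^{(m)})\ge|f(s^{(m)})|^{2}h(e,e)(s^{(m)})$; since $|f|^{2}\asymp|s|^{2\nu}$ near $s=0$, the ratio to $|w^{(m)}|^{2\alpha}=|s^{(m)}|^{2k\alpha}$ is unbounded, whence $\pi_\ast h(\sigma,\sigma)\neq O(|w|^{2\alpha})$ and $\sigma\notin\mathcal E^{iQ}$ is detected.

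Combining the two inclusions gives $\mathcal E^{iQ}|_U=\{\sigma:\pi_\ast h(\sigma,\sigma)=O(|w|^{2\alpha_Q^{i}})\}$ for $1\le i\le k$, which is \eqref{metric and weights}, and the same argument applies verbatim at $P_\infty$. I expect the crux to be exactly the total ramification of $\pi$ at the parabolic points: because all $k$ sheets collide there, the fibrewise sum blends the steps of the filtration, and one must check that the collision produces precisely the full flag with weights spaced by $1/k$ as required by \eqref{PHB weights definition}. The one-sided nature of the compatibility condition is the accompanying technical nuisance; it is resolved by using the upper bound on $h(e,e)$ for the decay inclusion and the positivity of the fibrewise sum (so that a single sheet already obstructs decay) for the reverse inclusion.
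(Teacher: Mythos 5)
Your proof is correct and follows essentially the same route as the paper's: both rest on the total ramification of $\pi$ at $P_0,P_\infty$ (so that $|w|\asymp|\tilde{\zeta}|^{k}$), with the decay direction obtained by summing $k$ comparable sheet contributions and the converse by bounding a single sheet from above by the fibrewise sum. The only substantive difference is presentational: where the paper transfers the condition $\pi_\ast h(\sigma,\sigma)=O(|w|^{2\alpha})$ upstairs to $h(\sigma,\sigma)=O(|\tilde{\zeta}|^{2k\alpha})$ and then cites the compatibility of $h$ with the filtration $\mathcal{L}[-iP]$ wholesale, you unpack that compatibility by hand, trivialising $L$ by a local frame $e$ and tracking the vanishing order of $f$ together with the one-sided sharpness of the bound on $h(e,e)$.
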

\begin{proof}
Close to (but not at) the parabolic points the map $\pi:S\to\CP^1$ is $k$-to-1, and there is a canonical decomposition
\[ E_w\cong\bigoplus_{(w,\zeta)\in\pi^{-1}(\{w\})}L_{(w,\zeta)}.\]
Then $\pi_\ast h$ is equal to the sum of the hermitian metrics on these summands, and is in particular a hermitian metric.

Now we show that this metric is compatible with the parabolic structure of $\mathcal{E}$.  Let $P=P_0$ or $P_\infty$ be one of the parabolic points of $S$.  Then $\tilde{\zeta}=\zeta^{-1}$ is a local coordinate on $S$ that vanishes at $P$, and $z=w$ or $w^{-1}$ is a local coordinate on a neighbourhood $U$ of $\pi(P)\in\CP^1$ that vanishes at $\pi(P)$.  In these coordinates the projection $\pi$ can be written $\tilde{\zeta}\mapsto z(\tilde{\zeta})$, and we know that $|z|=O(|\tilde{\zeta}|^k)$ and $|\tilde{\zeta}|=O(|z|^{1/k})$ as $\tilde{\zeta}\to 0$.

Let $\sigma\in\pi_\ast\mathcal{L}|_U$ and $\alpha\in\RR$.  Suppose that $h(\sigma,\sigma)(\tilde{\zeta})=O(|\tilde{\zeta}|^{2k\alpha})$ as $\tilde{\zeta}\to 0$.  Then
\[
\pi_\ast h(\sigma,\sigma)(z) = \sum_{\tilde{\zeta}\in\pi^{-1}(\{z\})} h(\sigma,\sigma)(\tilde{\zeta})=O(|\tilde{\zeta}|^{2k\alpha})=O(|z|^{2\alpha}).
\]
Conversely, suppose that $\pi_\ast h(\sigma,\sigma)(z)=O(|z|^{2\alpha})$.  Then
\[
h(\sigma,\sigma)(\tilde{\zeta})\leq \pi_\ast h(\sigma,\sigma)(z(\tilde{\zeta})) = O(|z|^{2\alpha})=O(|\tilde{\zeta}|^{2k\alpha}).
\]
%So
%\[ h(\sigma,\sigma)(\tilde{\zeta})=O(|\tilde{\zeta}|^{2k\alpha}) \iff \pi_\ast h(\sigma,\sigma)(z)=O(|z|^{2\alpha}). \]
Therefore, for $\alpha^{i-1}_{\pi(P)}<\alpha\leq\alpha^i_{\pi(P)}$,
\begin{align*} \pi_\ast h(\sigma,\sigma)=O(|z|^{2\alpha})&\iff h(\sigma,\sigma)=O(|\tilde{\zeta}|^{2k\alpha}) \\ &\iff \sigma\in \mathcal{L}[-iP]|_{\pi^{-1}(U)}=\mathcal{E}^i|_U. \end{align*}
So $\pi_\ast h$ is compatible with the parabolic structure of $\mathcal{E}$.
\end{proof}

%The local coordinates $\tilde{\zeta}=\zeta^{-1}$ and $z=w^{\pm 1}$ near $P=P_0,P_\infty$ are related by $z=\tilde{\zeta}^k/c+O(\tilde{\zeta}^{k-1})$, so
%\begin{equation}
%\mathcal{E}^{k\pi(P)} = \pi_\ast(\mathcal{L}[-kP])=\pi_\ast(\mathcal{L})[-\pi(P)]=\mathcal{E}[-\pi(P)].
%\end{equation}
%Therefore this filtration on the rank $k$ bundle $E$ is full.
%
%The function $\zeta$ on $S$ has poles of order 1 at the points $P=P_0,P_\infty$, so multiplying sections with $\zeta$ induces isomorphisms of stalks,
%\begin{equation}
%\zeta:\mathcal{L}[-iP]_P \to \mathcal{L}[(1-i)P]_P.
%\end{equation}
%It follows that the corresponding endomorphism $\phi$ of $\mathcal{E}$ induces

We summarise this discussion by stating the precise relationship between cylinder Higgs bundles and their spectral data.
\begin{theorem}\label{thm:spectral}
There is a one-to-one correspondence between the moduli space of framed rank $k$ cylinder Higgs bundles and the moduli space of spectral data $(S,L)$, where $S$ is a curve in $\CP^1$ of the form \eqref{spectral curve} and $L\to S$ is a framed parabolic line bundle (or torsion free invertible sheaf) with parabolic structures at the points $P_0$, $P_\infty$ such that
\begin{equation}
\pardeg(L) = k-1.
\end{equation}
This correspondence respects the actions of $\ZZ\times\ZZ$ given by twisting at the parabolic points, so induces a correspondence between the moduli spaces of unframed cylinder Higgs bundles and unframed spectral data.
\end{theorem}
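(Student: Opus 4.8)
The plan is to establish the claimed bijection by exhibiting mutually inverse constructions, the forward map $(E,\phi)\mapsto(S,L)$ and the inverse map $(S,L)\mapsto(E,\phi)$, and then to verify that the bookkeeping of parabolic weights, degrees, and twisting actions matches up. Almost all of the local analytic content has already been assembled in the preceding discussion, so the proof is largely a matter of organising that material and checking the two global bookkeeping claims: the parabolic degree relation $\pardeg(L)=k-1$ and the compatibility with the two twisting actions.

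First I would define the forward map. Given a framed cylinder Higgs bundle $(E,\phi)$, the spectral curve $S$ is the vanishing locus of $\det(\pi_\zeta^\ast z_0\mathrm{Id}_E-\pi_\zeta^\ast z_1\pi_w^\ast\phi)$, which by the computation around \eqref{spectral curve} has exactly the stated form. The sheaf $\mathcal{L}$ is the cokernel sheaf described above (a line bundle when $S$ is smooth, a torsion-free rank-one sheaf in general), and its parabolic structure at $P_0,P_\infty$ is given by the filtration $\mathcal{L}^{iP}=\mathcal{L}[-iP]$ together with the weights $\alpha_P^0=k\,\alpha_{\pi(P)}^0$ from \eqref{PHB weights explicit}. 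For the inverse map, given $(S,L)$ I would set $E$ to be the bundle determined by the locally free rank-$k$ sheaf $\pi_\ast\mathcal{L}$, let $\phi$ be the endomorphism induced by multiplication by $\zeta$, and install the parabolic structure on $E$ via the filtration $\mathcal{E}^{i\pi(P)}=\pi_\ast(\mathcal{L}[-iP])$ with weights recovered by $\alpha_{\pi(P)}^i=(\alpha_P^0+i)/k$. The verification that these two constructions are mutually inverse on the underlying holomorphic data $(E,\phi)\leftrightarrow(S,L)$ is exactly the classical spectral correspondence of \cite{hitchin,bnr} recalled above, while the matching of filtrations on $E$ and $L$ is precisely the stalk computation carried out in the text (using that $\phi^{-1}$ acts as multiplication by $\tilde\zeta=1/\zeta$, which vanishes to order $1$ at $P_0,P_\infty$). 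The preceding proposition shows the weight assignment is the natural one at the level of compatible hermitian metrics, so I would cite it to confirm that a weight $\alpha$ on $E$ corresponds to the weight $k\alpha$ on $L$, consistent with \eqref{PHB weights explicit}.

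The main obstacle is the degree bookkeeping: I must show that the constraint $\pardeg(E)=0$ defining a cylinder Higgs bundle is equivalent, under the correspondence, to $\pardeg(L)=k-1$. The plan here is to relate $\deg E=\deg\pi_\ast\mathcal{L}$ to $\deg\mathcal{L}$ via a Grothendieck--Riemann--Roch or Riemann--Hurwitz computation for the $k$-to-one map $\pi:S\to\CP^1$. Since $S$ is the hyperelliptic-type curve of genus $g=k-1$ established above, the pushforward degree formula reads $\deg\pi_\ast\mathcal{L}=\deg\mathcal{L}+1-g-k$ (equivalently $\deg\mathcal{L}-\deg(\det\pi_\ast\mathcal{O}_S)$, with $\deg(\det\pi_\ast\mathcal{O}_S)$ determined by the ramification of $\pi$). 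Combining this with the weight relation $\alpha_P^0=k\alpha_{\pi(P)}^0$ (so that the parabolic correction terms on the two sides differ by the factor $k$ that exactly compensates the $1/k$ spacing of the weights on $E$ from \eqref{PHB weights definition}), the condition $\pardeg(E)=0$ should translate into $\pardeg(L)=g=k-1$. I would carry out this degree computation carefully, paying attention to the contribution of the sheaf $\mathcal{O}(-1)$ twist in the definition of the cokernel and to the two ramification points $P_0,P_\infty$, as this is where sign and offset errors are most likely to creep in.

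Finally, for the equivariance under the $\ZZ\times\ZZ$ twisting actions I would observe that twisting $E$ by $n\,\pi(P)$ corresponds under the spectral correspondence to twisting $L$ by $nP$: this is immediate from the defining relations $\mathcal{E}^{i\pi(P)}=\pi_\ast(\mathcal{L}[-iP])$ and $\mathcal{L}^{iP}=\mathcal{L}[-iP]$, together with the twist-invariance of condition \eqref{HBC1} noted after Proposition \ref{prop:HBC}. Hence the forward and inverse maps intertwine the two actions, and the bijection descends to the quotients, yielding the stated correspondence between unframed cylinder Higgs bundles and unframed spectral data.
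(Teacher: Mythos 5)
Your proposal is correct and follows essentially the same route as the paper: the bijection and twist-equivariance are exactly the content of the preceding discussion (the Hitchin--BNR spectral correspondence plus the stalk/filtration matching), and the only new content is the degree relation, which you obtain from the pushforward formula $\deg\pi_\ast\mathcal{L}=\deg\mathcal{L}+1-g-k$ --- equivalent to the paper's own Grothendieck--Riemann--Roch equation, which with $g=k-1$ gives $\deg(L)=\deg(E)+2k-2$ --- followed by the same weight bookkeeping ($\sum_{i}(\alpha_P^0+i)/k=\alpha_P^0+(k-1)/2$ at each of the two points) yielding $\pardeg(L)=\pardeg(E)+(k-1)=k-1$. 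The only thing left implicit in your write-up is actually carrying out that weight sum, but the ingredients you cite are the correct ones and the computation goes through as you indicate.
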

\begin{proof}
The only part that has not been proved in the preceding discussion is the statement about the parabolic degree of $L$.  By the Grothendieck-Riemann-Roch theorem,
\begin{equation}
\frac{c_1(S)}{2} + c_1(L) = k\frac{c_1(\CP^1)}{2} + c_1(E).
\end{equation}
Now $c_1(L)=\deg(L)$, $c_1(E)=\deg(E)$, $c_1(\CP^1)=2$ and $c_1(S)=2-2g(S)= 4-2k$, so
\begin{equation}
\deg(L) = \deg(E) + 2k-2.
\end{equation}
From the definition of parabolic degree and equation \eqref{PHB weights explicit},
\begin{align}
\pardeg(L) &= \deg(L)+\alpha_{P_0}^0+\alpha_{P_\infty}^0 \\
\pardeg(E) &= \deg(E)+\sum_{i=0}^{k-1}\left(\frac{\alpha_{P_0}^0 + i}{k}+\frac{\alpha_{P_\infty}^0 + i+1-k}{k}\right) \\
&= \deg(E) + \alpha_{P_0}^0+\alpha_{P_\infty}^0 + (k-1).
\end{align}
Since $\pardeg(E)=0$, 
\begin{equation}
\pardeg(L) = \pardeg(E)-(k-1) +2-2k = k-1.
\end{equation}
\end{proof}

We have defined parabolic structures on $L\to S$ using cylinder Higgs bundles.  The spectral curve $S$ and line bundle $L$ can equivalently be defined directly in terms of the monopole chain \cite{ck}.  Mochizuki \cite{mochizuki} has identified parabolic structures associated with monopole chains, and these should induce parabolic structures on $L$.  It would be interesting to know whether the two parabolic structures on $L$ associated with Higgs bundles and monopole chains agree.

\section{Spectral data with cyclic symmetry}
\label{sec:4}

\subsection{Group actions on the moduli space}

In this section we will study fixed points of groups which act naturally on the moduli space of cylinder Higgs bundles.  We begin by describing the action of these groups, and their interpretation for monopole chains.

First, there is a group $\U(1)_R$ which acts by multiplication on $\phi$:
\begin{equation}
e^{\ii\theta}\cdot(E,\phi) = (E,e^{\ii\theta}\phi).
\end{equation}
This action multiplies the determinant of $\phi$ by a phase and in particular maps the parameter $c$ to $e^{\ii k\theta}c$ (see equation \eqref{eq:det phi}).  Cherkis and Kapustin \cite{ck} regard this parameter $c$ (which is $e^{-\beta\mathfrak{v}}$ in their notation) as fixed, and they study moduli spaces of solutions of Hitchin's equations with a fixed value of $c$.  So from their perspective, only the subgroup $\ZZ_{k}$ of $\U(1)_R$ acts on the moduli space, and in general elements of $\U(1)_R$ map from one moduli space to another.  We however do not give the parameter $c$ special status, so $\U(1)_R$ acts on the moduli spaces of framed and unframed cylinder Higgs bundles.

Under this action the spectral curve $S$ maps to its image under the transformation
\begin{equation}
(w,\zeta)\mapsto (w,e^{\ii\theta}\zeta)
\end{equation}
of $\CP^1\times\CP^1$.  The line bundle $L$ maps to its pull-back under the inverse of this map.  For monopole chains on $\RR^2\times S^1_\beta$, the action of $\U(1)_R$ corresponds to rotation of the plane $\RR^2$.

Next, there is a group $\ZZ_2$ which acts on $(E,\phi)$ as pull back under the map $w\mapsto -w$.  Note that under this map the coefficients of $w$ and $1/w$ in our expression \eqref{eq:det phi} for $\det(\phi)$ are multiplied by -1, so the action respects the condition that these coefficients are equal.  The spectral curve $S$ is mapped to its image under the map
\begin{equation}
(w,\zeta) \mapsto (-w,\zeta),
\end{equation}
acting on $\CP^1\times\CP^1$, and the line bundle $L$ is mapped to its image under pull-back.

The corresponding action on monopole chains is to twist with a line bundle over $\RR^2\times S^1_\beta$ equipped with a flat connection whose holonomy about the circle is $-1$.  This is equivalent to what is known in the physics literature as a ``large'' gauge transformation.  The analogous action on the moduli space of calorons (i.e.\ instantons on $\RR^3\times S^1$) is sometimes known as the rotation map \cite{cork,nye}.

Finally, there is an action of $\RR$ on the moduli space of framed cylinder Higgs bundles given by adding real numbers to the parabolic weights:
\begin{equation}
\label{eq:T action}
(\alpha_{\pi(P_0)}^i,\alpha_{\pi(P_\infty)}^i)\mapsto (\alpha_{\pi(P_0)}^i+\chi,\alpha_{\pi(P_\infty)}^i-\chi).
\end{equation}
We remind the reader that the points $\pi(P_0),\pi(P_\infty)\in\CP^1$ are those with coordinates $w=0,\infty$.  This action is defined in such a way that the parabolic degree is unchanged.  If $\chi=n\in\ZZ$ then, up to twist, this action is equivalent to
\begin{equation}
(\mathcal{E},\alpha_{\pi(P_0)}^i,\alpha_{\pi(P_\infty)}^i) \mapsto (\mathcal{E}^{-kn\pi(P_0)+kn\pi(P_\infty)},\alpha_{\pi(P_0)}^i,\alpha_{\pi(P_\infty)}^i)
\end{equation}
because $\alpha_P^i\pm n=\alpha_P^{i\pm kn}$.  Now $\mathcal{E}^{-kn\pi(P_0)+kn\pi(P_\infty)}\cong \mathcal{E}[n\pi(P_0)-n\pi(P_\infty)] \cong \mathcal{E}$ since the line bundle on $\CP^1$ associated to the divisor $\pi(P_0)-\pi(P_\infty)$ is trivial.  Therefore $\ZZ\subset\RR$ acts trivially and there is a well-defined action of $\U(1)_T:=\RR/\ZZ$ on the moduli space of unframed cylinder Higgs bundles.  The corresponding action on spectral data is
\begin{equation}
(S,L,\alpha_{P_0}^0,\alpha_{P_\infty}^0) \mapsto (S,L,\alpha_{P_0}^0+k\chi,\alpha_{P_\infty}^0-k\chi).
\end{equation}

If $h$ is a hermitian-Einstein metric on $E$ compatible with the parabolic weights $(\alpha_{\pi(P_0)}^i,\alpha_{\pi(P_\infty)}^i)$ then the unique hermitian-Einstein metric compatible with the weights in equation \eqref{eq:T action} is $|w|^{2\chi}h$.  The Chern connection of $|w|^{2\chi}h$ differs from that of $h$ by $\chi\dd w/w = \chi\beta(\dd x^1+\ii\dd x^2)$.  Transforming to a unitary gauge, we see that the action of $\U(1)_T$ on solutions of Hitchin's equations is
\begin{equation}
(A_1,A_2,\phi)\mapsto (A_1,A_2+\ii\chi\beta\mathrm{Id}_k,\phi).
\end{equation}
For monopole chains on $\RR^2\times S^1_\beta$, this corresponds to translation in the circle $S^1_\beta=\RR/\beta\ZZ$ by $\beta\chi$.

\subsection{Maximal symmetry}

We have seen that there is a natural action of the group
\begin{equation}
\ZZ_2\times\U(1)_R\times\U(1)_T
\end{equation}
on the moduli space of unframed cylinder Higgs bundles.  We now seek points in this moduli space with non-trivial stabiliser.  Our first result is
\begin{lemma}
Let $G\subset\ZZ_2\times\U(1)_R\times\U(1)_T$ be a subgroup which fixes a point in the moduli space of unframed cylinder Higgs bundles of rank $k$.  Then the image of $G$ under the projection $\ZZ_2\times\U(1)_R\times\U(1)_T\to\ZZ_2\times\U(1)_R$ is a subgroup of the cyclic group of order $2k$ generated by
\begin{equation}
\label{R definition}
R:(w,\zeta)\mapsto (-w,\exp(-\ii\pi/k)\zeta).
\end{equation}
If the image of $G$ equals the whole of this cyclic group then the spectral curve $S$ of the fixed point is nonsingular and given by
% \begin{equation}
% cw+\frac{c}{w}-\zeta^k = 0.
% \end{equation}
\begin{equation}
\label{symmetric spectral curve}
cw^2+c-w\zeta^k = 0.
\end{equation}
\end{lemma}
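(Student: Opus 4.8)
The plan is to reduce the whole statement to a computation with the explicit family of curves \eqref{spectral curve}. The crucial observation is that the spectral curve $S$ is a well-defined invariant of the \emph{unframed} cylinder Higgs bundle. Indeed, away from the parabolic points any twist $F$ of $E$ is canonically isomorphic to $E$ and carries the same section $\phi$, so the characteristic coefficients $a_i$ and $c$ of \eqref{spectral curve} — which may be computed on $\CC^\ast=\CP^1\setminus\{0,\infty\}$ — are unchanged by twisting and by bundle isomorphism; recall also $c\neq0$ from condition (\ref{HBC3}) of proposition \ref{prop:HBC}. Moreover the $\U(1)_T$ factor acts trivially on $S$. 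Hence, writing $p\colon\ZZ_2\times\U(1)_R\times\U(1)_T\to\ZZ_2\times\U(1)_R$ for the projection, if $g\in G$ has $p(g)=(\epsilon,e^{\ii\theta})$ with $\epsilon=\pm1$, then the spectral curve of $g\cdot[(E,\phi)]$ is $T_{\epsilon,\theta}(S)$, where $T_{\epsilon,\theta}\colon(w,\zeta)\mapsto(\epsilon w,e^{\ii\theta}\zeta)$; since equal points of the moduli space have equal spectral curves, $g$ fixing the point forces $T_{\epsilon,\theta}(S)=S$. This decouples the problem from the line bundle and the weights.

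First I would write the defining function as $F(w,\zeta)=\zeta^k+\sum_{j=0}^{k-1}a_j\zeta^j-c(w+w^{-1})$. Invariance $T_{\epsilon,\theta}(S)=S$ is equivalent, since $S$ is irreducible and $F$ is of the fixed shape \eqref{spectral curve}, to a proportionality $F(\epsilon w,e^{\ii\theta}\zeta)=\lambda F(w,\zeta)$ for some $\lambda\in\CC^\ast$. Matching the coefficient of $\zeta^k$ gives $\lambda=e^{\ii k\theta}$, and matching the $w+w^{-1}$ term (using $c\neq0$ and $\epsilon^{-1}=\epsilon$) gives $\epsilon=\lambda$. Thus $e^{\ii k\theta}=\epsilon\in\{\pm1\}$, so $\theta=\pi n/k$ and $\epsilon=(-1)^n$ for some $n\in\ZZ$; these are precisely the powers of the generator $R$ of \eqref{R definition}, and they form a cyclic group of order $2k$ as $n$ ranges mod $2k$. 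As $p(G)$ is the image of a subgroup under a homomorphism it is itself a subgroup, and being contained in $\langle R\rangle$ it is a subgroup of this cyclic group. This establishes the first assertion.

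For the second assertion I would specialise to the case $R\in p(G)$, i.e.\ $(\epsilon,\theta)=(-1,-\pi/k)$. Matching the coefficient of $\zeta^j$ in $F(\epsilon w,e^{\ii\theta}\zeta)=\lambda F(w,\zeta)$ then reads $a_j\bigl(e^{\ii(k-j)\theta}-1\bigr)=0$; with $\theta=-\pi/k$ one has $e^{\ii(k-j)\theta}=-e^{\ii j\pi/k}$, which equals $1$ only when $j\equiv k\pmod{2k}$, impossible for $0\leq j<k$. Hence every $a_j$ vanishes and $S$ is cut out by $\zeta^k=c(w+w^{-1})$, i.e.\ by $cw^2+c-w\zeta^k=0$ after clearing the pole, which is exactly \eqref{symmetric spectral curve}.

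It remains to verify that this curve is nonsingular in $\CP^1\times\CP^1$, and this is the most computational step. Viewing $g=w\zeta^k-cw^2-c$ as a bidegree $(2,k)$ polynomial and working in the four standard product charts, I would show the Jacobian $(\partial_w g,\partial_\zeta g)$ never vanishes on $\{g=0\}$: in the finite chart $\partial_\zeta g=kw\zeta^{k-1}=0$ together with $g=0$ forces $\zeta=0$ and $w=\pm\ii$, where $\partial_w g=\zeta^k-2cw=\mp2c\ii\neq0$; and the fibres over $w=0$, $w=\infty$, $\zeta=\infty$ meet the curve only in $P_0=(0,\infty)$ and $P_\infty=(\infty,\infty)$, at which a short computation in the coordinates $1/\zeta$, $1/w$ gives a nonvanishing derivative. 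The main obstacle is precisely this boundary analysis at $P_0$ and $P_\infty$, since the interior is immediate and the rest is phase bookkeeping. As a consistency check, a smooth bidegree $(2,k)$ curve has genus $(2-1)(k-1)=k-1$, matching the genus recorded after \eqref{spectral curve}.
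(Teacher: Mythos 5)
Your proposal is correct and follows essentially the same route as the paper's proof: both reduce the lemma to matching coefficients of the explicit family \eqref{spectral curve} under the transformation $(w,\zeta)\mapsto(\epsilon w,e^{\ii\theta}\zeta)$, using $c\neq 0$ to pin down $\epsilon e^{\ii k\theta}$ and then observing that invariance under $R$ kills every $a_j$. The one genuine addition on your side is that you actually verify the nonsingularity of $cw^2+c-w\zeta^k=0$ (interior Jacobian check plus the boundary analysis at $P_0$, $P_\infty$), a claim the lemma makes but the paper's own proof never addresses; your preliminary observation that the spectral curve is twist-invariant, hence well defined on the unframed moduli space, also makes explicit a step the paper leaves implicit.
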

\begin{proof}
Recall that the equation of the spectral curve takes the form $cw^2+c-w\left(\zeta^k+a_{k-1}\zeta^{k-1}+\ldots\right)$.  Written this way, the coefficients of $w^2$ and $w^0$ are unchanged under transformations in $\ZZ_2\times\U(1)_R\times\U(1)_T$, so a transformation fixes the curve if and only if it fixes the remaining coefficients of the remaining terms.

Now $\U(1)_T$ acts trivially on the curve, while $(\pm1,e^{\ii\theta})\in\ZZ_2\times\U(1)_R$ multiplies the coefficient of $w\zeta^j$ by $\pm e^{-\ii j\theta}$.  The coefficient of $w\zeta^k$ is non-zero, and clearly invariant if and only if $(\pm1,e^{\ii\theta})=R^n$ for some $n$.  Finally, the remaining coefficients $a_i$ are invariant under $R$ if and only if they are all zero.
\end{proof}

Motivated by this result, we say that a stabiliser subgroup $G\subset\ZZ_2\times \U(1)_R\times\U(1)_T$ is \emph{maximal} if its image in $\ZZ_2\times \U(1)_R$ is generated by the transformation $R$ defined in \eqref{R definition}.  From the perspective of monopole chains on $\RR^2\times S^1_\beta$, maximal groups are those which act on $\RR^2$ as the cyclic group of order $2k$.

\subsection{Classification}

The goal of the remainder of this section is to classify points in the moduli space of cylinder Higgs bundles with maximal symmetry group.  If a cylinder Higgs bundle has maximal stabiliser group then its spectral curve $S$ is of the form \eqref{symmetric spectral curve} and its parabolic line bundle must be invariant under the lifted action of $R$ up to twist: thus
\begin{equation}
((R^{-1})^\ast\mathcal{L},\alpha_{P_0}^0+k\chi,\alpha_{P_\infty}^0-k\chi) = (\mathcal{L}[-lP_\infty-mP_0],\alpha_{P_0}^0+m,\alpha_{P_\infty}+l)
\end{equation}
for some $\chi\in\RR$ and $l,m\in\ZZ$.  Clearly this equation is solved by choosing $m=-l$, $\chi=-l/k$ and choosing $\mathcal{L}$ such that
\begin{equation}
\label{line bundle fixed}
R^\ast \mathcal{L} = \mathcal{L}[lP_\infty-lP_0].
\end{equation}
Thus in order to classify cylinder Higgs bundles with maximal symmetry we need to classify line bundles $L$ solving equation \eqref{line bundle fixed}.  A cylinder Higgs bundle whose line bundle solves this equation will be invariant under the action of the order $2k$ cyclic subgroup of $\ZZ_2\times\U(1)_R\times\U(1)_T$ generated by
\begin{equation}
(-1,\exp(\pi\ii/k),\exp(2l\pi\ii /k)).
\end{equation}
This group will be denoted $\ZZ_{2k}^{(2l)}$.

In order to solve equation \eqref{line bundle fixed} we employ the Abel-Jacobi map, which gives an explicit parametrisation of line bundles on $S$ of fixed degree.  Given bases $\omega^j$ and $\delta_i$ for $H^0(S,\Omega^{1,0})$ and $H_1(S,\mathbb{Z})$, with $j=1,\dots,g$ and $i=1,\dots,2g$, the period lattice $\Pi\subset \CC^g$ is the lattice generated by the vectors
\begin{equation}
\label{periods}
\Pi_i = \left(\int_{\delta_i}\omega^1,\dots,\int_{\delta_i}\omega^g\right).
\end{equation}
The Jacobian $\mathcal{J}$ is defined to be the quotient $\CC^g/\Pi$.  The Abel-Jacobi map $\mu$ sends degree 0 divisors to points in the Jacobian and is defined by the equations
\begin{equation}
\mu(P-Q) = \left(\int_{Q}^P\omega^1,\dots,\int_{Q}^P\omega^g\right)\quad\mbox{and}\quad \mu(D+D')=\mu(D)+\mu(D').
\end{equation}
A divisor lies in the kernel of this map if and only its induced line bundle is holomorphically trivial, so the Abel-Jacobi map is a bijection from the space of degree 0 line bundles to the Jacobian.

%1 inch = 72pt
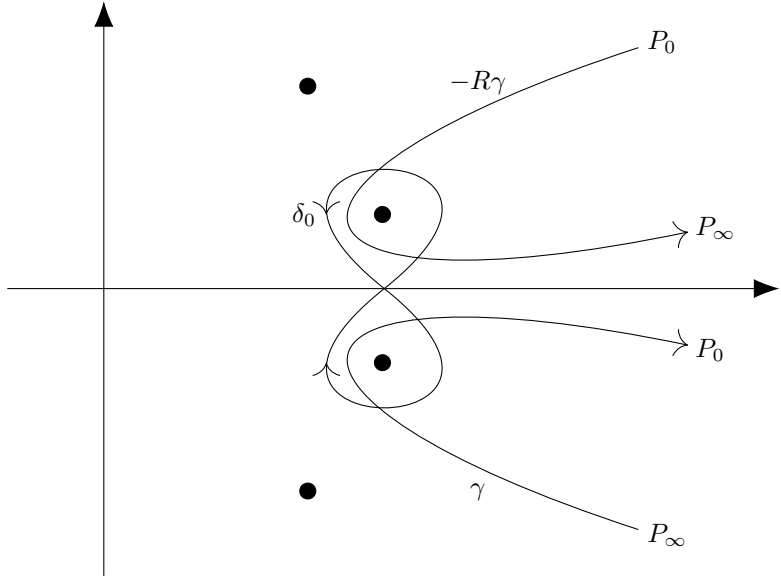
\begin{figure}[h]
\begin{center}
\begin{tikzpicture}
%axes
\draw[arrows={-Latex[scale=2.0]}] (-0.5in,0) -- (3.5in,0);
\draw[arrows={-Latex[scale=2.0]}] (0,-1.5in) -- (0,1.5in);
%branch points
\filldraw[rotate=15,fill=black] (108pt,0pt) circle[radius=3pt];
\filldraw[rotate=45,fill=black] (108pt,0pt) circle[radius=3pt];
\filldraw[rotate=-15,fill=black] (108pt,0pt) circle[radius=3pt];
\filldraw[rotate=-45,fill=black] (108pt,0pt) circle[radius=3pt];
%curves
\draw[arrows={<[scale=2.0]-},rotate=15] (3in,-36pt) .. controls (54pt,-27pt) and (54pt, 27pt) .. (3in,36pt);
\draw[arrows={->[scale=2.0]},rotate=-15] (3in,-36pt) .. controls (54pt,-27pt) and (54pt, 27pt) .. (3in,36pt);
\draw (105pt,0pt) .. controls (30pt,60pt) and (180pt,60pt) .. (105pt,0pt);
\draw (105pt,0pt) .. controls (30pt,-60pt) and (180pt,-60pt) .. (105pt,0pt);
\draw[arrows={<[scale=2.0]-}] (83.3pt,28pt) -- (83.3pt,29pt);
\draw[arrows={<[scale=2.0]-}] (83.3pt,-28pt) -- (83.3pt,-29pt);
%labels	
\draw (200pt,-93pt) node[right]{$P_\infty$};
\draw (200pt,93pt) node[right]{$P_0$};
\draw (218pt,-23pt) node[right]{$P_0$};
\draw (218pt,23pt) node[right]{$P_\infty$};
\draw (140pt,-70pt) node[below]{$\gamma$};
\draw (140pt,70pt) node[above]{$-R\gamma$};
\draw (83pt,28pt) node[left]{$\delta_0$};
\end{tikzpicture}
\end{center}
\caption{Curves used to construct the homology basis for $S$ (see text for details).  The curve $S$ is represented as a two-sheeted branched covering over $\CC$ using the map $S\ni(w,\zeta)\mapsto \zeta$, and the branch points are indicated by solid circles.}\label{sketch}
\end{figure}

Now we describe convenient choices of basis for the homology and cohomology groups of the curve $S$ defined by \eqref{symmetric spectral curve}.  It is useful to regard $S$ as a branched double cover over $\CC$, with covering map $(w,\zeta)\mapsto \zeta$ and branch points the roots of $\zeta^k=2c$.  Let $\gamma$ denote the curve in $ S$ which starts at %$P_0$ and ends at $P_\infty$ 
$P_\infty$ and ends at $P_0$ and whose image in the $\zeta$-plane encloses one branch point, as depicted in figure \ref{sketch}.  We denote by $-R\gamma$ the image of $\gamma$ under the action $R$ with reversed orientation, and by $\gamma-R\gamma$ the closed curve obtained by joining $\gamma$ and $-R\gamma$.  Then $\gamma-R\gamma$ is homologous in $H_1(S,\ZZ)$ to the curve $\delta_0$ depicted in figure \ref{sketch}.  The $2k$ images $\delta_i=R^i\delta_0$ of this curve under the action of $\mathbb{Z}_{2k}$ provide a spanning set for $H_1(S,\ZZ)$ (but they are not linearly independent).

A convenient basis for $H^0( S,\Omega^{1,0})$ is
\begin{equation}
 \omega^j = C_j \zeta^{j-1}\frac{\dd\zeta}{w-\zeta^k/2c},\quad j=1,\ldots,k,
\end{equation}
with $C_j\in\CC$ denoting some normalisation constants which are yet to be chosen (see page 255 of \cite{gh}).  This cohomology basis transforms nicely under the action of $R^{-1}$ defined in \eqref{R definition}:
\begin{equation}
\label{R on omega}
 R^\ast\omega^j = -e^{-j\pi\ii/k}\omega^j.
\end{equation}
It follows that the integrals of $\omega^j$ over the curves $R^i\gamma$ and $\delta_i$ are determined by its integral over $\gamma$:
\begin{align}
\int_{R^i\gamma} \omega^j &= \int_{\gamma} (R^i)^\ast\omega^j \\
&= (- e^{-j\pi\ii/k})^i \int_\gamma \omega^j \\
\int_{\delta_i}\omega^j &= \int_{R^i\gamma} \omega^j - \int_{R^{i+1}\gamma}\omega^j \\
&= \big((- e^{-j\pi\ii/k})^i-(- e^{-j\pi\ii/k})^{i+1}\big)\int_\gamma\omega^j.
\end{align}
We will choose the constants $C_j$ so that
\begin{equation}
\int_\gamma\omega^j=1.
\end{equation}
We remark that the integral on the left of this equation is guaranteed to be non-zero, since if it was not, the integral of $\omega^j$ over all the curves $\delta_i$ would be zero, contradicting the fact that $\omega^j$ represents a nontrivial class in $H^0(S,\Omega^{1,0})$ and $\delta_i$ span $H_1(S,\ZZ)$.

With these choices of bases, the generators of the period lattice defined in \eqref{periods} are
\begin{equation}
\label{Pi explicit}
\Pi_i = \rho^i(1-\rho) \left( \begin{array}{c} 1 \\ 1 \\ \vdots \\ 1 \end{array}\right), \quad i=0,\dots,2k-1,
\end{equation}
where $\rho$ is the matrix representing the action of $R$:
\begin{equation}
\rho = -{\rm diag}(e^{-\pi\ii/k},e^{-2\pi\ii/k},\ldots,e^{-(k-1)\pi\ii/k}).
\end{equation}

The divisor %$(P_\infty-P_0)$
$(P_0-P_\infty)$ that appears in equation \eqref{line bundle fixed} corresponds under the Abel-Jacobi map to the point in the Jacobian represented by the following vector:
\begin{equation}
\Gamma_0 = \left(\begin{array}{c} \int_\gamma \omega^1 \\ \int_\gamma \omega^2 \\ \vdots \\ \int_\gamma \omega^{k-1} \end{array} \right) = \left(\begin{array}{c} 1 \\ 1 \\ \vdots \\ 1 \end{array}\right).
\end{equation}
Addition of this vector generates an action of $\ZZ$ on the Jacobian, and the quotient by this action will be denoted $\mathcal{J}/\langle \Gamma_0\rangle$.  By equation \eqref{Pi explicit} $\rho\Gamma_0-\Gamma_0\in\Pi$, so multiplication with the matrix $\rho$ gives a well-defined action on $\mathcal{J}/\langle \Gamma_0\rangle$.  Any solution $(\mathcal{L},l)$ of \eqref{line bundle fixed} determines a fixed point of $\rho$ in $\mathcal{J}/\langle \Gamma_0\rangle$, so finding these fixed points helps to solve equation \eqref{line bundle fixed}.  The fixed points are classified in the following:
\begin{proposition}
There are precisely $k$ fixed points of $\rho$ in $\mathcal{J}/\langle \Gamma_0\rangle$.  They are represented by the vectors,
\begin{equation}
l(1-\rho)^{-1} \Gamma_0,\quad l=0,1,\ldots,k-1.
\end{equation}
\end{proposition}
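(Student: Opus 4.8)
The plan is to recast the statement as a fixed-point count for the linear automorphism $\rho$ of the real torus $T:=\CC^{k-1}/\Lambda$, where $\Lambda:=\Pi+\ZZ\Gamma_0$, so that $\mathcal{J}/\langle\Gamma_0\rangle=T$ and a class $[x]$ is fixed by $\rho$ exactly when $(\rho-1)x\in\Lambda$. I would first record that $\rho^{2k}=\mathrm{Id}$ and that $1$ is not an eigenvalue of $\rho$ (its eigenvalues are $-e^{-\ii j\pi/k}=e^{\ii\pi(k-j)/k}$ with $0<k-j<k$), so $1-\rho$ is invertible. Next I would rewrite the lattice: putting $e_i:=\rho^i\Gamma_0$, the generators \eqref{Pi explicit} are $\Pi_i=e_i-e_{i+1}$ with indices read mod $2k$, so a telescoping argument gives $\Lambda=\sum_{i=0}^{2k-1}\ZZ e_i$, and applying $1-\rho$ yields $(1-\rho)\Lambda=\sum_i\ZZ\Pi_i=\Pi$. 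Because $\Pi$ is the full-rank period lattice of the genus $k-1$ curve $S$ and $1-\rho$ is an $\RR$-linear isomorphism, $\Lambda=(1-\rho)^{-1}\Pi$ is automatically a full-rank lattice, so no separate discreteness argument is needed.

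With the lattice understood, I would verify the candidates and then count all fixed points. For $x_l=l(1-\rho)^{-1}\Gamma_0$ a direct computation gives $(\rho-1)x_l=-l\Gamma_0\in\Lambda$, so every $x_l$ is fixed. For the total number, the fixed set is $(\rho-1)^{-1}\Lambda/\Lambda$, which the isomorphism $\rho-1$ identifies with $\Lambda/(\rho-1)\Lambda$; hence its cardinality is the index $[\Lambda:(\rho-1)\Lambda]=|\det_\RR(\rho-\mathrm{Id})|=|\det_\CC(\rho-\mathrm{Id})|^2$.

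It then remains to evaluate this determinant. Writing $\omega=e^{-\ii\pi/k}$ and using $\omega^k=-1$, I have $\det_\CC(\rho-\mathrm{Id})=\prod_{j=1}^{k-1}(-\omega^j-1)=(-1)^{k-1}P$ with $P:=\prod_{j=1}^{k-1}(1+\omega^j)=\prod_{m=k+1}^{2k-1}(1-\omega^m)$, so the count equals $|P|^2$. Using the standard identity $\prod_{m=1}^{2k-1}(1-\omega^m)=2k$, the factor $1-\omega^k=2$, and the symmetry $\omega^{2k-m}=\overline{\omega^m}$ (which shows $\overline{P}=\prod_{m=1}^{k-1}(1-\omega^m)$), I would conclude $|P|^2=2k/2=k$, so there are exactly $k$ fixed points. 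Finally, distinctness of $x_0,\ldots,x_{k-1}$ follows from the same circle of ideas: $[\Lambda:\Pi]=|\det_\RR(1-\rho)|=|P|^2=k$, and since $\Lambda/\Pi$ is cyclic generated by the common image of the $e_i$, the class of $\Gamma_0$ has order $k$; therefore $(l-l')\Gamma_0\in\Pi$ iff $k\mid(l-l')$, and applying $(1-\rho)^{-1}$ shows $x_l\equiv x_{l'}$ iff $k\mid(l-l')$. Having produced $k$ distinct fixed points out of exactly $k$, these must be all of them.

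The main obstacle is the cyclotomic evaluation, which does double duty: it simultaneously pins down the total number of fixed points ($|\det_\RR(\rho-\mathrm{Id})|=k$) and the index $[\Lambda:\Pi]=k$ needed for distinctness. The care required lies in the bookkeeping of the shifted index range $m=k+1,\ldots,2k-1$ and in correctly passing between the complex determinant and the real determinant of the underlying $2(k-1)$-dimensional map (the squaring). Everything else is formal lattice and module manipulation.
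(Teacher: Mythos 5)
Your proof is correct, and it shares the paper's skeleton --- realising the fixed-point set as $(\rho-1)^{-1}\Lambda/\Lambda \cong \Lambda/(\rho-1)\Lambda$, where your $\Lambda=\Pi+\ZZ\Gamma_0$ is exactly the paper's lattice $\Gamma$ and $(1-\rho)\Lambda=\Pi$ --- but the key computation, the index $[\Lambda:\Pi]$, is evaluated by a genuinely different method. The paper does it with explicit integral linear algebra: it first proves that $\Gamma_0,\ldots,\Gamma_{2k-3}$ are linearly independent over $\RR$ (a polynomial-roots argument), so that $\Pi_0,\ldots,\Pi_{2k-3}$ is a $\ZZ$-basis of $\Pi$, and then expands $\Gamma_0=\sum_{i=0}^{k-2}\frac{k-1-i}{k}(\Pi_{2i}+\Pi_{2i+1})$ to read off that $l\Gamma_0\in\Pi$ precisely when $k\mid l$; hence $\Gamma/\Pi$ is cyclic of order exactly $k$. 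You instead take full-rankness of $\Pi$ from the general theory of the Jacobian (legitimate here, since the paper already invokes Abel--Jacobi theory and the preceding lemma guarantees $S$ is nonsingular of genus $k-1$), apply the covolume fact $[\Lambda:(\rho-1)\Lambda]=|\det\nolimits_\RR(\rho-\mathrm{Id})|=|\det\nolimits_\CC(\rho-\mathrm{Id})|^2$, and evaluate the determinant cyclotomically via $\prod_{m=1}^{2k-1}(1-\omega^m)=2k$ and $1-\omega^k=2$. Your route bypasses the paper's two most laborious steps (the linear-independence lemma and the fractional expansion of $\Gamma_0$), and the single evaluation $|\det(1-\rho)|=k$ does double duty: it gives the total count and, combined with the cyclicity of $\Lambda/\Pi$ (generated by the common class of the vectors $\rho^i\Gamma_0$), the distinctness of the representatives $l(1-\rho)^{-1}\Gamma_0$. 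The paper's route, by contrast, is self-contained at the level of explicit generators: it never needs the index-determinant formula or a priori discreteness of $\Lambda$ (which it effectively re-proves), and it exhibits the generator of $\Gamma/\Pi$ concretely. Both arguments ultimately rest on the same structural fact, namely that $[\Gamma_0]$ generates $\Lambda/\Pi$ and has order exactly $k$.
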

\begin{proof}
We begin with a few linear algebraic observations.  Let
\begin{equation}
\Gamma_i = \left(\begin{array}{c} \int_{R^i\gamma} \omega^1 \\ \int_{R^i\gamma} \omega^2 \\ \vdots \\ \int_{R^i\gamma} \omega^{k-1} \end{array} \right) = \rho^i \left(\begin{array}{c} 1 \\ 1 \\ \vdots \\ 1 \end{array}\right),\quad i=0,\ldots 2k-1.
\end{equation}
The vectors $\Gamma_i$ and $\Pi_i$ are related as follows:
\begin{equation}
\label{Pi and P}
\Pi_i  = \Gamma_i-\Gamma_{i+1} = (1-\rho)\Gamma_i.
\end{equation}
%It follows that $P_i = P_0-\sum_{j=0}^{i-1}\Pi_i$ and that the lattice generated by $P_0,\ldots,P_{2k-1}$ equals $P$.

The $2k$ vectors $\Gamma_0,\ldots,\Gamma_{2k-1}$ are not linearly independent over $\RR$: they satisfy two non-trivial relations,
\begin{equation}
\label{sum P2i}
\sum_{i=0}^{k-1}\Gamma_{2i} = 0\quad\text{and}\quad \sum_{i=0}^{k-1}\Gamma_{2i+1} = 0,
\end{equation}
because roots of unity sum to zero.  However, the vectors $\Gamma_0,\ldots,\Gamma_{2k-3}$ are linearly independent over $\RR$.  To show this, suppose that $\sum_{i=0}^{2k-3}\alpha_i\Gamma_i=0$ for some real numbers $\alpha_i$.  Then additionally $\sum_{i=0}^{2k-3}\alpha_i\bar{\Gamma_i}=0$, so
\begin{equation}
\sum_{i=0}^{2k-3} \alpha_i(-e^{-\pi \ii j/k})^i = 0\quad\text{and}\quad \sum_{i=0}^{2k-3} \alpha_i(-e^{\pi \ii j/k})^i = 0 \quad\forall j=1,\ldots,k-1,
\end{equation}
in other words, the polynomial $\sum_{i=0}^{2k-3} \alpha_i x^i$ has $2k-2$ distinct roots.  Since the degree of this polynomial is less than or equal to $2k-3$ it must be zero, so the coefficients $\alpha_i$ must vanish.

Since the vectors $\Gamma_0,\ldots,\Gamma_{2k-3}$ are linearly independent and $(1-\rho)$ is invertible, by equation \eqref{Pi and P} the vectors $\Pi_0,\ldots,\Pi_{2k-3}$ are linearly independent and by equations \eqref{Pi and P} and \eqref{sum P2i} they generate $\Pi$.

Let $\Gamma$ be the lattice generated by $\Pi_0,\ldots,\Pi_{2k-3}$ and $\Gamma_0$.  Then $\mathcal{J}/\langle \Gamma_0\rangle=\CC^{k-1}/\Gamma$.  Moreover, $\Gamma$ equals the lattice generated by $\Gamma_0,\ldots,\Gamma_{2k-3}$, because by \eqref{Pi and P} the $\Pi_i$ can be expressed in terms of the $\Gamma_i$, and conversely $\Gamma_i = \Gamma_0-\sum_{j=0}^{i-1}\Pi_j$.

The fixed points of $\rho$ in $\CC^{k-1}/\Gamma$ are represented by solutions $z\in\CC^{k-1}$ of
\begin{equation}
\rho z = z \mod \Gamma.
\end{equation}
The set of solutions $z\in\CC^{k-1}$ to this equation is the lattice $(1-\rho)^{-1}\Gamma$, so the set of solutions in $\CC^{k-1}/\Gamma$ is the quotient $((1-\rho)^{-1}\Gamma)/\Gamma$; we need to determine the size of this set.  The quotient $((1-\rho)^{-1}\Gamma)/\Gamma$ is an abelian group which is clearly isomorphic to $\Gamma/(1-\rho)\Gamma$.  Note that the lattices $(1-\rho)\Gamma$ and $\Pi$ are equal, since by equation \eqref{Pi and P} $\Pi_i = (1-\rho)\Gamma_i$.  Thus the number of fixed points equals the size of the abelian group $\Gamma/\Pi$.

Equation \eqref{Pi and P} implies that 
\begin{equation}
\label{P_i and P_{i+1}}
\Gamma_i=\Gamma_{i+1}\mod\Pi,
\end{equation}
and hence that the group $\Gamma/\Pi$ is generated by $\Gamma_0$.  Equations \eqref{P_i and P_{i+1}} and \eqref{sum P2i} imply that $k\Gamma_0=0\mod\Pi$, so the order of $\Gamma_0$ in $\Gamma/\Pi$ divides $k$.  We claim that the order of $\Gamma_0$ equals $k$.  It will follow that $\Gamma/\Pi$ has size $k$, and that there are $k$ fixed points of the form stated in the proposition.

To prove our claim we derive an expression for $\Gamma_0$ in terms of the basis $\Pi_0,\ldots,\Pi_{2k-3}$ with the help of equation \eqref{sum P2i}:
\begin{eqnarray}
\Gamma_0 &=& \frac{1}{k}\sum_{i=0}^{k-1} \Gamma_{2i} + \sum_{i=0}^{k-2} \frac{k-1-i}{k}(\Pi_{2i}+\Pi_{2i+1}) \\
&=& \sum_{i=0}^{k-2} \frac{k-1-i}{k}(\Pi_{2i}+\Pi_{2i+1}).
\end{eqnarray}
It is clear from this expression that $l\Gamma_0$ can be written as a linear combination of the $\Pi_i$ with integer coefficients if and only if $l=0\mod k$.  Since the basis vectors $\Pi_{0},\ldots,\Pi_{2k-3}$ generate $\Pi$, the order of $\Gamma_0$ in $\CC^{k-1}/\Pi$ is $k$ as claimed.
\end{proof}

The preceding proposition allows us to prove:
\begin{theorem}\label{thm:cyclic spectral}
For fixed $|c|>0$ and $k\in\NN$ there are, up to the action of $\ZZ_2\times \U(1)_R\times\U(1)_T$, precisely $k$ distinct unframed cylinder Higgs bundles with maximal symmetry.  They are fixed by the groups $\ZZ_{2k}^{(2l)}$, with $l=0,\ldots,k-1$.
\end{theorem}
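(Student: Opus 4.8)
The plan is to move the entire problem onto the spectral side and then feed it into the preceding proposition. By Theorem \ref{thm:spectral} the moduli space of unframed cylinder Higgs bundles is in bijection with that of unframed spectral data $(S,L)$, so I would argue throughout with pairs $(S,L)$. First I would note that, by the preceding lemma, a maximally symmetric bundle has spectral curve of the form \eqref{symmetric spectral curve}. Applying the $\U(1)_R$ action, which on such a curve rescales $c\mapsto e^{\ii k\theta}c$, I would fix the phase of $c$, so that for given $|c|>0$ the curve $S$ is determined up to the group action; the residual subgroup of $\ZZ_2\times\U(1)_R$ preserving this standardised $S$ is then exactly the cyclic group $\langle R\rangle$ of order $2k$.

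Next I would translate the line-bundle condition \eqref{line bundle fixed}, namely $R^\ast\mathcal{L}=\mathcal{L}[lP_\infty-lP_0]$, into Abel--Jacobi language. The transformation law \eqref{R on omega} shows that $R^\ast$ acts on $H^0(S,\Omega^{1,0})$, and hence on the Jacobian $\mathcal{J}$, as the matrix $\rho$, while $\Gamma_0=\mu(P_0-P_\infty)$ identifies the twist $\mathcal{O}(lP_\infty-lP_0)$ with $-l\Gamma_0$. Writing $z$ for the image of $\mathcal{L}$ under $\mu$, condition \eqref{line bundle fixed} becomes $\rho z = z-l\Gamma_0 \pmod{\Pi}$. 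The twist by $\mathcal{O}(P_0-P_\infty)$ is the degree-zero combination of $-1$ at $P_0$ and $+1$ at $P_\infty$, hence a genuine element of the twisting group $\ZZ\times\ZZ$ defining unframed equivalence; quotienting by it replaces $\mathcal{J}$ with $\mathcal{J}/\langle\Gamma_0\rangle$ and turns the relation into the fixed-point equation $\rho z=z$ treated in the preceding proposition. Thus solutions of \eqref{line bundle fixed}, taken modulo unframed equivalence and over all $l$, correspond bijectively to fixed points of $\rho$ in $\mathcal{J}/\langle\Gamma_0\rangle$; by the proposition there are exactly $k$ of them, represented by $l(1-\rho)^{-1}\Gamma_0$, and the point indexed by $l$ satisfies $(1-\rho)z=l\Gamma_0$, hence solves \eqref{line bundle fixed} with that same integer $l$ and determines the stabiliser $\ZZ_{2k}^{(2l)}$.

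Finally I would convert this count of fixed points into a count of orbits. Since $\ZZ_2\times\U(1)_R\times\U(1)_T$ is abelian, the stabiliser of a point is constant along its orbit; the $k$ fixed points carry the pairwise distinct stabilisers $\ZZ_{2k}^{(2l)}$, $l=0,\ldots,k-1$ (with $l$ entering only modulo $k$ because $\chi=-l/k$ and $\U(1)_T=\RR/\ZZ$), so they lie in $k$ distinct orbits. Conversely, every maximally symmetric bundle yields, after standardising $S$ and reducing $L$ modulo unframed equivalence, one of these $k$ fixed points, and the residual group $\langle R\rangle\times\U(1)_T$ fixes that point; hence there are no further orbits. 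Together with Theorem \ref{thm:spectral}, which ensures each fixed point arises from an actual unframed cylinder Higgs bundle, this gives precisely $k$ such bundles up to the group action.

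I expect the crux to be the dictionary step of the second paragraph: one must check carefully that the pullback $R^\ast$ on line bundles is exactly multiplication by $\rho$ on the Jacobian, getting the orientation of $R$ and of $\gamma$ right so that the integer $l$ in \eqref{line bundle fixed} coincides with the index of the fixed point $l(1-\rho)^{-1}\Gamma_0$ and hence selects the correct group $\ZZ_{2k}^{(2l)}$; and that the quotient by $\langle\Gamma_0\rangle$ is neither coarser nor finer than the unframed twist equivalence. The counting and the linear-independence arguments that underpin it are already delivered by the preceding proposition.
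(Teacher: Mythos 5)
Your proposal is correct and follows essentially the same route as the paper's proof: standardise the spectral curve \eqref{symmetric spectral curve} using the lemma and the $\U(1)_R$ action, translate \eqref{line bundle fixed} via the Abel--Jacobi dictionary into the fixed-point equation for $\rho$ on $\mathcal{J}/\langle\Gamma_0\rangle$, invoke the preceding proposition to obtain the $k$ points $l(1-\rho)^{-1}\Gamma_0$, and verify $\rho\,l(1-\rho)^{-1}\Gamma_0 = l(1-\rho)^{-1}\Gamma_0 - l\Gamma_0$ to identify the stabilisers $\ZZ_{2k}^{(2l)}$. The one step you gloss over, which the paper makes explicit, is the parabolic weights: since $\deg L=0$ and $\pardeg L = k-1$ force $\alpha_{P_0}^0+\alpha_{P_\infty}^0=k-1$, the weights are unique up to the $\U(1)_T$ action, and this is needed to conclude that each fixed point in the Jacobian corresponds to a single orbit of spectral data rather than a continuum.
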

\begin{proof}
The spectral curve $S$ of a maximally symmetric cylinder Higgs bundle must be the one given in equation \eqref{symmetric spectral curve}.  The group $\U(1)_R$ alters the phase of the parameter $c$, so for fixed $|c|$ and up to the action of $\U(1)_R$ this curve is unique.

By twisting we can arrange that the line bundle $L$ has degree zero, so must be one of the line bundles identified in the previous proposition.  Any such line bundle corresponds to a point in the Jacobian represented by a vector
\begin{equation}
j\Gamma_0 + l(1-\rho)^{-1}\Gamma_0,\quad j\in\ZZ,\, l=0,\ldots,k-1.
\end{equation}
Recall that twisting with the divisor $P_\infty-P_0$ corresponds to adding $\Gamma_0$ in the Jacobian.  Thus by twisting we may arrange that $j=0$, and up to equivalence there are precisely $k$ possibilities for the line bundle $L$, labelled by $l$.

Since $\deg L=0$ and $\pardeg L=k-1$, the parabolic weights must satisfy $\alpha_{P_0}^0+\alpha_{P_\infty}^0=k-1$.  It follows that the weights are unique up to the action of $\U(1)_T$.

Finally, since
\begin{equation}
\rho l(1-\rho)^{-1}\Gamma_0 = l(1-\rho)^{-1}\Gamma_0 -l\Gamma_0,
\end{equation}
$R^\ast \mathcal{L}=\mathcal{L}[lP_\infty-lP_0]$.  Therefore, from the discussion surrounding equation \eqref{line bundle fixed} the cylinder Higgs bundle is invariant under the action of $\ZZ_{2k}^{(2l)}$.
\end{proof}

As an immediate consequence we obtain
\begin{corollary}\label{cor:classification}
For fixed $k\in\NN$, $l=0,\ldots,k-1$ and $|c|>0$ there exists a monopole chain invariant under the action of $\ZZ_{2k}^{(2l)}$.  This monopole chain is unique up to the action of $\ZZ_2\times U(1)_T \times U(1)_R$.  There are no monopole chains invariant under the action of $\ZZ_{2k}^{(2l+1)}$.
\end{corollary}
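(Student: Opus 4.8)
The plan is to deduce the corollary from Theorem~\ref{thm:cyclic spectral} by transporting its classification of maximally symmetric cylinder Higgs bundles along the chain of correspondences $\mathcal{M}_{\text{spec}}^k\to\mathcal{M}^k_{\text{Higgs}}\to\mathcal{M}^k_{\text{Hit}}\to\mathcal{M}_{\text{mon}}^k$ and then reading off the induced symmetry groups on the monopole side. The first thing I would establish is that this composition is a bijection which is equivariant for $\ZZ_2\times\U(1)_R\times\U(1)_T$: Theorem~\ref{thm:spectral} and Theorem~\ref{thm:KH} furnish the first two arrows and the Nahm transform of \cite{ck} the third, while the first part of this section identifies the action on cylinder Higgs bundles with the action on monopole chains by (large gauge transformation, rotation of $\RR^2$, translation of $S^1_\beta$). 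Consequently a cylinder Higgs bundle has stabiliser $G$ if and only if the corresponding monopole chain is invariant under the image of $G$ under this dictionary. I note that existence of the invariant monopole chains will follow unconditionally, whereas the uniqueness and non-existence clauses use bijectivity (in particular surjectivity of $\mathcal{M}^k_{\text{Higgs}}\to\mathcal{M}^k_{\text{mon}}$), i.e.\ the converse directions provided by Theorem~\ref{thm:KH}(iii) and \cite{ck}.

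Next I would make the dictionary explicit on the relevant finite subgroups. The group $\ZZ_{2k}^{(n)}\subset\ZZ_2\times\U(1)_R\times\U(1)_T$ is generated by $(-1,e^{\pi\ii/k},e^{n\pi\ii/k})$; under the dictionary its $\U(1)_R$-factor $e^{\pi\ii/k}$ produces a rotation of $\RR^2$ by $\pi/k$, its $\U(1)_T$-factor $e^{n\pi\ii/k}$ (that is $\chi=n/(2k)$) produces a translation of $S^1$ by $n/(2k)$, and the $\ZZ_2$-factor $-1$ is the large gauge transformation that the lemma forces to accompany the rotation, since the image in $\ZZ_2\times\U(1)_R$ of any stabiliser must lie in the cyclic group $\langle R\rangle$ generated by $R$ as in \eqref{R definition}. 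This is exactly the geometric generator $(\zeta,\chi+\ZZ)\mapsto(e^{\pi\ii/k}\zeta,\chi+n/(2k)+\ZZ)$ of $\ZZ_{2k}^{(n)}$ from the introduction, so the two uses of the notation $\ZZ_{2k}^{(n)}$ agree.

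For even $n=2l$ the existence and uniqueness statements then follow at once. Theorem~\ref{thm:cyclic spectral} supplies, for each $l=0,\ldots,k-1$, a maximally symmetric cylinder Higgs bundle fixed by $\ZZ_{2k}^{(2l)}$ and unique up to $\ZZ_2\times\U(1)_R\times\U(1)_T$; transporting it along the correspondence yields a monopole chain invariant under $\ZZ_{2k}^{(2l)}$ and unique up to the same group, giving the first two assertions. For the non-existence of $\ZZ_{2k}^{(2l+1)}$-invariant chains I would argue as follows: any monopole chain invariant under $\ZZ_{2k}^{(n)}$ corresponds to a cylinder Higgs bundle whose stabiliser has image all of $\langle R\rangle$ in $\ZZ_2\times\U(1)_R$, so by the lemma and Theorem~\ref{thm:cyclic spectral} it is one of the classified bundles and its spectral data satisfy the invariance condition preceding \eqref{line bundle fixed}. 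Matching the parabolic weights in that condition forces the $\U(1)_T$-shift to obey $k\chi=m\in\ZZ$; since $\chi=n/(2k)$, this reads $n/2\in\ZZ$, which fails for odd $n$, so no invariant chain exists.

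I expect the main obstacle to be the equivariant dictionary of the first two paragraphs, namely verifying that the abstract $\ZZ_2\times\U(1)_R\times\U(1)_T$-action corresponds to the geometric rotation-and-translation action on $\RR^2\times S^1$ with the stated normalisation, and explaining cleanly why the $\ZZ_2$ large-gauge factor must accompany the order-$2k$ rotation generator $e^{\pi\ii/k}$ rather than the rotation being realised by $\U(1)_R$ alone. Once that dictionary is secure, the parity obstruction $n/2\in\ZZ$ is immediate from the weight-matching already carried out in the proof of Theorem~\ref{thm:cyclic spectral}, and existence and uniqueness are a formal transport of that theorem.
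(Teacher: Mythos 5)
Your proposal is correct and takes essentially the same approach as the paper: the paper obtains the corollary as an ``immediate consequence'' of Theorem~\ref{thm:cyclic spectral}, transported through the correspondences of Theorems~\ref{thm:spectral} and~\ref{thm:KH} and the Nahm transform of \cite{ck}, which is precisely your argument. Your explicit checks --- the matching of the two uses of the notation $\ZZ_{2k}^{(n)}$, the parity obstruction $k\chi\in\ZZ$ forcing $n$ even, and the observation that existence is unconditional while uniqueness and non-existence require surjectivity of $\mathcal{M}^k_{\text{Higgs}}\to\mathcal{M}^k_{\text{mon}}$ (a point the paper itself flags in the introduction) --- simply spell out what the paper leaves implicit.
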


\section{Construction of monopole chains}
\label{sec:5}

In this section we present pictures of the $\ZZ_{2k}$-symmetric monopole chains whose spectral data we have just discussed.  The construction of the monopole chains proceeds in three stages: (i) find the associated Higgs bundle; (ii) solve the hermitian-Einstein equation to find a solution of Hitchin's equations; (iii) apply the Nahm transform to construct the monopole chain.  Only the first stage can be accomplished explicitly; the remaining two are implemented numerically.  We first describe how the monopole chains are constructed, and finish with a discussion of the qualitative features of these monopole chains.

% We begin by fixing some notation.  Let $k\in\NN$ be the monopole charge and fix $l\in\{0,\ldots,k-1\}$.  Let
% \begin{equation}
% \label{conventions}
% m = \gcd(k,l),\quad n=\frac{k}{m},\quad \omega=e^{2\pi\ii/k}.
% \end{equation}
% We identify $\CC^\ast$ with $\CC/(2\pi\ii/\beta)\ZZ$ by writing $w=e^{\beta s}$ for $s\in\CC$.

\subsection{Higgs bundle and metric with cyclic symmetry}

We identify $\CC^\ast$ with $\CC/(2\pi\ii/\beta)\ZZ$ by writing $w=e^{\beta s}$.  We work in a holomorphic trivialisation over the covering space $\CC$, so that the Higgs field $\phi$ is represented by a rank $k$ holomorphic matrix-valued function of $s\in\CC$.  Since this defines a section of a bundle over $\CC/(2\pi\ii/\beta)\ZZ$ it must be periodic up to a holomorphic gauge transformation $U$:
\begin{equation}\label{phi periodic}
\phi(s+2\pi\ii/\beta) = U(s)^{-1}\phi(s)U(s)
\end{equation}
In order to have $\ZZ_{2k}$ symmetry it must satisfy
\begin{equation}\label{phi invariant}
\omega\phi(s+\pi\ii/\beta) = V(s)^{-1}\phi(s)V(s)
\end{equation}
where
\begin{equation}
\omega := \exp(\pi\ii/k)
\end{equation}
and $V$ is a holomorphic invertible matrix-valued function.  Like $\phi$, this function $V$ represents a section of a bundle over $\CC/(2\pi\ii/\beta)$ so must be periodic up to a gauge transformation in the following sense:
\begin{equation}\label{V periodic}
V(s+2\pi\ii/\beta) = U(s)^{-1}V(s)U(s+\pi\ii/\beta).
\end{equation}
Note that the argument of $U$ on the right is $s+\pi\ii/\beta$ rather than $s$ for consistency with equations \eqref{phi periodic} and \eqref{phi invariant}: $U(s)$ represents a map from the fibre $E_{s+2\pi\ii/\beta}$ at $s+2\pi\ii/\beta$ to $E_s$, and $V(s)$ represents a map from $E_{s+\pi\ii/\beta}$ to $E_s$, so both sides of equation \eqref{V periodic} compose nicely to give maps from $E_{s+3\pi\ii/\beta}$ to $E_{s+2\pi\ii/\beta}$.

Thus to construct a $\ZZ_{2k}$-invariant Higgs bundle we need to solve equations \eqref{phi periodic}, \eqref{phi invariant} and \eqref{V periodic}.  Combining equations \eqref{phi invariant} and \eqref{phi periodic}, we see that
\begin{equation}
\label{def:W}
\omega^2\phi(s)=W(s)^{-1}\phi(s)W(s),\text{ where }W(s):=V(s)V(s+\pi\ii/\beta)U(s)^{-1}.
\end{equation}
This equation places constraints on $W$: since $\det(\phi)\neq0$, $\omega^2$ must be an eigenvalue of the adjoint action of $W^{-1}$ with multiplicity at least $k$.  This means that the eigenvalues of $W$ must be distinct and their ratios must be $k$th roots of unity.  We may therefore choose to work in a gauge in which
\begin{equation}
\label{soln:W}
W(s) = w(s)\diag(1,\omega^2,\omega^4,\ldots,\omega^{2k-2})
\end{equation}
for some function $w(s)$.  It follows that
\begin{equation}
\label{soln:phi}
\phi(s) = c^{1/k}\Sh^{-1}\diag(\phi_0(s),\phi_1(s),\ldots,\phi_{k-1}(s))
\end{equation}
where $\Sh$ is 	 the ``shift'' matrix introduced in equation \eqref{def:shift}.

The gauge transformation $V(s)$ in equation \eqref{phi invariant} needs to preserve this form of $\phi$.  Therefore it must be of the form
\begin{equation}
\label{soln:V}
V(s) = \Sh^l\diag(V_0(s),V_1(s),\ldots,V_{k-1}(s))
\end{equation}
for some $l\in\{0,1,\ldots,k-1\}$.  %Then the right hand side of equation \eqref{phi invariant} is
%\begin{equation}
%g^{-1}\phi g = \diag(\gamma_{-1}\gamma_0^{-1}\nu_l,\gamma_0\gamma_1^{-1}\nu_{l+1},\ldots,\gamma_{k-2}\gamma_{k-1}^{-1}\nu_{l-1}).
%\end{equation}
Then equation \eqref{phi invariant} becomes
\begin{equation}
\label{phi invariant 2}
\omega \phi_j(s+\pi\ii/\beta) = V_{j}(s)V_{j-1}(s)^{-1}\phi_{j+l}(s),\quad j=0,\ldots,k-1
\end{equation}
In this equation and those that follow, indices are to be understood modulo $k$: thus for example $V_{-1}=V_{k-1}$.

From the equation \eqref{eq:det phi} we see that
\begin{equation}
\label{product phij}
e^{-\beta s}+e^{\beta s} = c^{-1}(-1)^{k-1}\det\phi = \prod_{j=0}^{k-1}\phi_j(s).
\end{equation}
The zeros of the function on the left of this equation are the points $s_p=(2p-1)\pi\ii/2\beta$ for $p\in\ZZ$.  Each of these must be a zero of precisely one of the functions $\phi_j$.  By applying a gauge transformation if necessary, we may assume without loss of generality that $-\pi\ii/2\beta $ is a zero of $\phi_0$.  Then equation \eqref{phi invariant 2} tells us that $(2p-1)\pi\ii/2\beta$ is a zero of $\phi_{-lp}$.  Thus the distribution of zeros amongst the functions $\phi_j$ is completely determined by equation \eqref{phi invariant 2}.

This observation guides the choice of the functions $\phi_j$.  Let
\begin{equation}
\label{soln:phij}
\phi_j(s) = \prod_{\begin{array}{c}\scriptstyle{i\in\ZZ_k} \\ \scriptstyle{il=j\mod k}\end{array}}\mu_i(s),\qquad \mu_j(s)=e^{-\beta s/k}-\omega^{2j+1}e^{\beta s/k}.
\end{equation}
These satisfy equation \eqref{product phij} and have zeros in the desired places.  Moreover,
\begin{equation}
\mu_j(s+\pi\ii/\beta)=\omega^{-1}\mu_{j+1}(s)
\end{equation}
and
\begin{equation}
\phi_j(s+\pi\ii/\beta)=\begin{cases}\omega^{-m}\phi_{j+l}(s) & j=0\mod m \\
\phi_{j+l}(s) & j\neq0 \mod m \end{cases},\label{nu_j periodic}
\end{equation}
where
\begin{equation}
m:= \gcd(k,l).
\end{equation}
Inserting these into equation \eqref{phi invariant 2} gives
\begin{equation}
V_{j}(s)V_{j-1}(s)^{-1} = \begin{cases}\omega^{1-m} & j=0\mod m\\ \omega & j\neq 0\mod m\end{cases}.
\end{equation}
The solution of this equation is
\begin{equation}
\label{soln:Vj}
V_j(s) = v(s) \omega^{j\mod m}
\end{equation}
for some function $v(s)$.

The gauge transformation $U$ is determined by $V$ and $W$ via equation \eqref{def:W}:
\begin{align}
U(s) &= W(s)^{-1}V(s)V(s+\pi\ii/\beta) \\
\label{soln:U}&= \Sh^{2l}\diag(U_0(s),\ldots,U_{k-1}(s))\\
\label{soln:Uj}U_j(s)&:= v(s)v(s+\pi\ii/\beta)w(s)^{-1}\omega^{-4l}\omega^{-2m\operatorname{floor}(j/m)},
\end{align}
where $\operatorname{floor} (j/m)=(j-(j\mod m))/m$ denotes the greatest integer less than or equal to $j/m$.

We have now solved equations \eqref{phi periodic} and \eqref{phi invariant}: $\phi$ is given in equations \eqref{soln:phi} and \eqref{soln:phij}, $V$ is given in equations \eqref{soln:V} and \eqref{soln:Vj}, and $U$ is given in equations \eqref{soln:U} and \eqref{soln:Uj}.  It remainds to solve equation \eqref{V periodic}.  A short calculation using equations \eqref{soln:V}, \eqref{soln:Vj}, \eqref{soln:U}, \eqref{soln:Uj} shows that
\begin{equation}
U(s)^{-1}V(s)U(s+\pi\ii/\beta) = \frac{w(s)v(s+2\pi\ii/\beta)}{w(s+\pi\ii/\beta)v(s)}\omega^{2l}V(s).
\end{equation}
Therefore the remaining equation \eqref{V periodic} is equivalent to
\begin{equation}\label{w periodic}
w(s+\pi\ii/\beta)=\omega^{2l}w(s).
\end{equation}

In order to determine the symmetry type of this solution we now turn our attention to the hermitian metric.  This is represented by a positive hermitian matrix-valued function $h$ satisfying
\begin{align}
\label{h periodic} h(s+2\pi\ii/\beta) &= U(s)^\dagger h(s) U(s) \\
\label{h invariant} |e^{\beta l's/k}|^2h(s+\pi\ii/\beta) &= V(s)^\dagger h(s) V(s).
\end{align}
The first of these says that $h$ defines a metric on the bundle over $\CC/(2\pi\ii/\beta)\ZZ$, and the second says that $h$ is invariant under the action of $\ZZ_{2k}^{(2l')}$, where $l'$ is to be determined.  The equations together imply that
\begin{equation}
\label{h and W}
|e^{2\beta l's/k}|^2h(s) = W(s)^\dagger h(s) W(s),
\end{equation}
where $W$ was defined in equations \eqref{def:W} and \eqref{soln:W}.  Taking determinants, we see that
\begin{equation}
|e^{2\beta l's}|^2 = |w(s)^k|^2.
\end{equation}
It follows that $w(s)$ equals $e^{2\beta l' s/k}$ times a phase.  Comparing with \eqref{w periodic} we see that $l'=l$, so the solution has $\ZZ_{2k}^{(2l)}$ symmetry.  Equation \eqref{h and W} is then solved by
\begin{equation}
\label{soln:h}
h(s) = \exp\diag(\psi_0(s),\psi_1(s),\ldots,\psi_{k-1}(s))
\end{equation}
for real functions $\psi_j(s)$.

We now impose the condition that $\det(h)=1$, or equivalently,
\begin{equation}
\label{sum psij}
\sum_{j=0}^{k-1}\psi_j(s) = 0.
\end{equation}
No generality is lost in doing so: the hermitian-Einstein equation implies that $\ln \det h$ is a harmonic, and therefore equal to the real part of a holomorphic function $f$.  Applying the gauge transformation $\exp(-f(s)/2k)\mathrm{Id}_k$ then ensures that $\det h=1$.

With condition \eqref{sum psij} imposed, and $V$ given in equations \eqref{soln:V} and \eqref{soln:Vj}, equation \eqref{h invariant} is solved by
\begin{equation}
\label{psi_j periodic}
v(s)=e^{\beta ls/k},\quad \psi_j(s+\pi\ii/\beta)=\psi_{j+l}(s).
\end{equation}
It is straightforward to check that these conditions also ensure that equation \eqref{h periodic} is solved.

\subsection{Hermitian-Einstein equation and the Nahm transform}

This completes our description of the $\ZZ_{2k}^{(2l)}$-invariant Higgs bundle.  Switching to a unitary gauge, the solution is
\begin{align}
\phi &= c^{1/k}\Sh^{-1}\nonumber\\
&\quad \diag(e^{(\psi_{k-1}-\psi_0)/2}\phi_0,e^{(\psi_0-\psi_1)/2}\phi_1,\ldots,e^{(\psi_{k-2}-\psi_{k-1})/2}\phi_{k-1}) \\
A_1 &= -\frac{\ii}{2}\frac{\pa}{\pa x_2}\diag(\psi_0,\psi_1,\ldots,\psi_{k-1}) \\
A_2 &= \frac{\ii}{2}\frac{\pa}{\pa x_1}\diag(\psi_0,\psi_1,\ldots,\psi_{k-1}),
\end{align}
with $A$ being the Chern connection and $s=x^1+\ii x^2$.  The cases $l=0$ and (for even $k$) $l=k/2$ of this solution were previously obtained by Maldonado \cite{maldonado2}.  Note that, from equations \eqref{nu_j periodic} and \eqref{psi_j periodic}, these are the cases where the functions $\phi_j$ and $\psi_j$ are invariant under $s\mapsto s+2\pi\ii/\beta$.  The remaining cases of the solution are new.

By construction this solves the second of Hitchin's equations \eqref{eq:Hit2}.  Inserting this into the first of Hitchin's equation \eqref{eq:Hit1} (i.e.\ the hermitian-Einstein equation \eqref{eq:HE}) yields
\begin{equation}
\label{psi PDE}
|c|^{-2/k}\triangle\psi_j = |\phi_{j+1}|^2\exp(\psi_j-\psi_{j+1})-|\phi_j|^2\exp(\psi_{j-1}-\psi_j).
\end{equation}
These are the variational equations for the functional
\begin{equation}
\int_0^{2\pi/\beta}\int_{-\infty}^{\infty} \sum_{j=0}^{k-1}\left(\sfrac12|c|^{-2/k}|\nabla\psi_j|^2+|\phi_{j+1}|^2\exp(\psi_j-\psi_{j+1})\right)\dd x^1\dd x^2,
\end{equation}
which is of course the Donaldson-Simpson functional.  Equations \eqref{psi PDE} are a form of the affine Toda field equations.  This is a consequence of invariance under the subgroup $\ZZ_k\subset\ZZ_{2k}$: it has been known for some time that $\ZZ_k$-invariant Higgs bundles are equivalent to affine Toda equations \cite{baraglia}.

The differential equation \eqref{psi PDE} needs to be supplemented with boundary conditions coming from the parabolic structures at $x^1=\pm\infty$.  Rather than deal with the parabolic structures directly, it is more straightforward to proceed by examining the differential equation \eqref{psi PDE}.  From our earlier work we know that the right hand side, which represents $[\phi,\phi^\dagger]$, should tend to zero as $|x^1|\to\infty$.  The asymptotics of $|\phi_j|^2$ are given by
\begin{equation}
|\phi_j|^2\sim \begin{cases} e^{2\beta |x^1|m/k} & j=0\mod m \\
1 & j\neq 0 \mod m.\end{cases}
\end{equation}
Inserting these into the right hand side of equation \eqref{psi PDE} and equating to zero gives
\begin{equation}
\psi_{j+1}-2\psi_j+\psi_{j-1} = \begin{cases} - \frac{2\beta|x^1|m}{k} & j=0\mod m \\ \frac{2\beta|x^1|m}{k} & j=-1\mod m \\
0 & \text{otherwise}.
\end{cases}
\end{equation}
This difference equation has a unique solution satisfying the constraint \eqref{sum psij}, leading to the asymptotic boundary conditions
\begin{equation}
\label{psi asymptotics}
\psi_j \sim \frac{2\beta|x^1|}{k}\left(\frac{m-1}{2} - (j\mod m) \right)\text{ as }|x^1|\to\infty.
\end{equation}

Thus Hitchin data for cyclic monopole chains can be constructed by solving the differential equation \eqref{psi PDE}, subject to the boundary conditions \eqref{psi asymptotics}.  Although the Hitchin equations are (like all reductions of self-dual Yang-Mills) integrable, we have elected to solve this equation numerically.  We used a heat flow technique to solve a discretised version of the equations \eqref{psi PDE} on a finite cylinder defined by $|x^1|\leq L$ for some $L>0$, with Neumann boundary conditions based on \eqref{psi asymptotics} imposed at $x^1=\pm L$.

Having solved the Hitchin equations, the corresponding monopole chain can be constructed using the Nahm transform.  This involves the operators
\begin{align}
\slashed{D}_{y} &= \begin{pmatrix} (y_1+\ii y_2)\mathrm{Id}_k-\phi & 2(\pa_s+A_s)+y_3 \\ 2(\pa_{\bar s}+A_{\bar s})-y_3 & (y_1-\ii y_2)\mathrm{Id}_k-\phi^\dagger \end{pmatrix},\\
\slashed{D}_{y}^\dagger &= \begin{pmatrix} (y_1-\ii y_2)\mathrm{Id}_k-\phi^\dagger & -2(\pa_s+A_s)-y_3 \\ -2(\pa_{\bar s}+A_{\bar s})+y_3 & (y_1+\ii y_2)\mathrm{Id}_k-\phi \end{pmatrix},
\end{align}
which depend on $y\in\RR_3$ and act on $\CC^{2k}$-valued functions $Z(x_1,x_2)$.  Since in our ansatz the Hitchin data is periodic up to a gauge transformation $U$, these functions $Z$ are required to satisfy
\begin{equation}
Z\left(x^1,x^2+\frac{2\pi}{\beta}\right) = \begin{pmatrix} U(x^1,x^2)^{-1} & 0 \\ 0 & U(x^1,x^2)^{-1}\end{pmatrix} Z(x^1,x^2).
\end{equation}
It is known that the dimensions of the spaces of $L^2$-normalisable solutions of $\slashed{D}_yZ=0$ and $\slashed{D}_y^\dagger Z=0$ are respectively 0 and 2 \cite{ck}.

The Nahm transform is a two-step process; the first step is to find for each $y\in\RR^3$ an $L^2$-orthonormal basis for the kernel of $\slashed{D}_{y}^\dagger$, i.e.\ solutions $Z_1(x;y),Z_2(x;y)$ to $\slashed{D}_y^\dagger Z_a=0$ normalised such that
\begin{equation}
\slashed{D}_{y}^\dagger Z_a=0,\quad \int_0^{2\pi/\beta}\int_{-\infty}^\infty Z_a^\dagger Z_b \dd x^1\dd x^2 = \delta_{ab}.
\end{equation}
In the second step the monopole Higgs field is constructed via
\begin{equation}
\label{NT Higgs field}
\hat{\phi}_{ab}(y) = \ii\int_0^{2\pi/\beta}\int_{-\infty}^\infty x^1\, Z_a^\dagger Z_b\, \dd x^1\dd x^2.
\end{equation}
%In this way the gauge-invariant quantity $\|\hat{\phi}\|^2=\frac12\Tr(\hat{\phi}\hat{\phi}^\dagger)$ can be evaluated on a rectangular lattice.  From this, the energy density for the monopole chain can be constructed using Ward's formula $|\dd^{\hat A}\hat{\phi}|^2=\frac12 \triangle|\phi|^2$.

We carried out this process numerically, using our numerical Hitchin data.  Rather than solve the equation $\slashed{D}_y^\dagger Z=0$ directly, we solved instead the equation $\slashed{D}_y\slashed{D}_y^\dagger Z=0$.  The two equations are equivalent (since $\slashed{D}_y$ has zero-dimensional kernel) but the latter is more amenable to numerical solution because the differential operator involved is positive and of second order.  The formula that we used for $\slashed{D}_y\slashed{D}_y^\dagger$ is
% \begin{align}
% \slashed{D}_y\slashed{D}_y^\dagger &= \begin{pmatrix} \frac32 \phi\phi^\dagger-\frac12\phi^\dagger\phi-\zeta\phi^\dagger-\bar{\zeta}\phi+|\zeta|^2 - \triangle^{A,\chi} & -2(\pa_s\phi+[A_s,\phi]) \\ -2(\pa_s\phi+[A_s,\phi])^\dagger & \frac32\phi^\dagger\phi-\frac12\phi\phi^\dagger-\zeta\phi^\dagger-\bar{\zeta}\phi+|\zeta|^2-\triangle^{A,\chi} \end{pmatrix} \\
% \triangle^{A,\chi} &= \big(\pa_r + A_r\big)^2 + \big(\pa_t + A_t+\ii\chi\mathrm{Id}_k\big)^2.
% \end{align}
\begin{align}
\slashed{D}_y\slashed{D}_y^\dagger &= \begin{pmatrix} [ \phi,\phi^\dagger] - \triangle^{A,\phi,y} & -2(\pa_s\phi+[A_s,\phi]) \\ -2(\pa_s\phi+[A_s,\phi])^\dagger & [\phi^\dagger,\phi]-\triangle^{A,\phi,y} \end{pmatrix} \\
\triangle^{A,\phi,y} &= \big(\pa_1 + A_1\big)^2 + \big(\pa_2 + A_2+\ii y_3\mathrm{Id}_k\big)^2\nonumber\\
&\quad-\frac12\{\phi-y_1-\ii y_2,\phi^{\dagger}-y_1+\ii y_2\}.
\end{align}
(Note that this identity assumes that $A,\phi$ solve Hitchin's equations.)  The operator $\slashed{D}_y\slashed{D}_{y}^\dagger$ was stored as a sparse matrix and its smallest eigenvalues were computed using an algorithm included in the MATLAB software package.  Our numerical approximation to the basis $Z_1,Z_2$ of the kernel was given by the two eigenvectors corresponding to the smallest eigenvalues.  In practice the two smallest eigenvalues were always very close to zero, in the sense that they were less than 1\% of the third-smallest eigenvalue.  This gives an indication of the reliability of our numerical scheme.

Finally, the gauge-invariant quantity $\|\hat{\phi}\|^2 = \sfrac12\Tr(\hat{\phi}\hat{\phi}^\dagger)$ was evaluated for points $y$ in a rectangular lattice by evaluating the integrals \eqref{NT Higgs field}.  From this, the energy density was calculated using Ward's formula $\mathcal{E}=\triangle \|\hat{\phi}\|^2$ \cite{ward81} for the energy density.

\begin{figure}[t]
\begin{center}
\includegraphics[height=110pt]{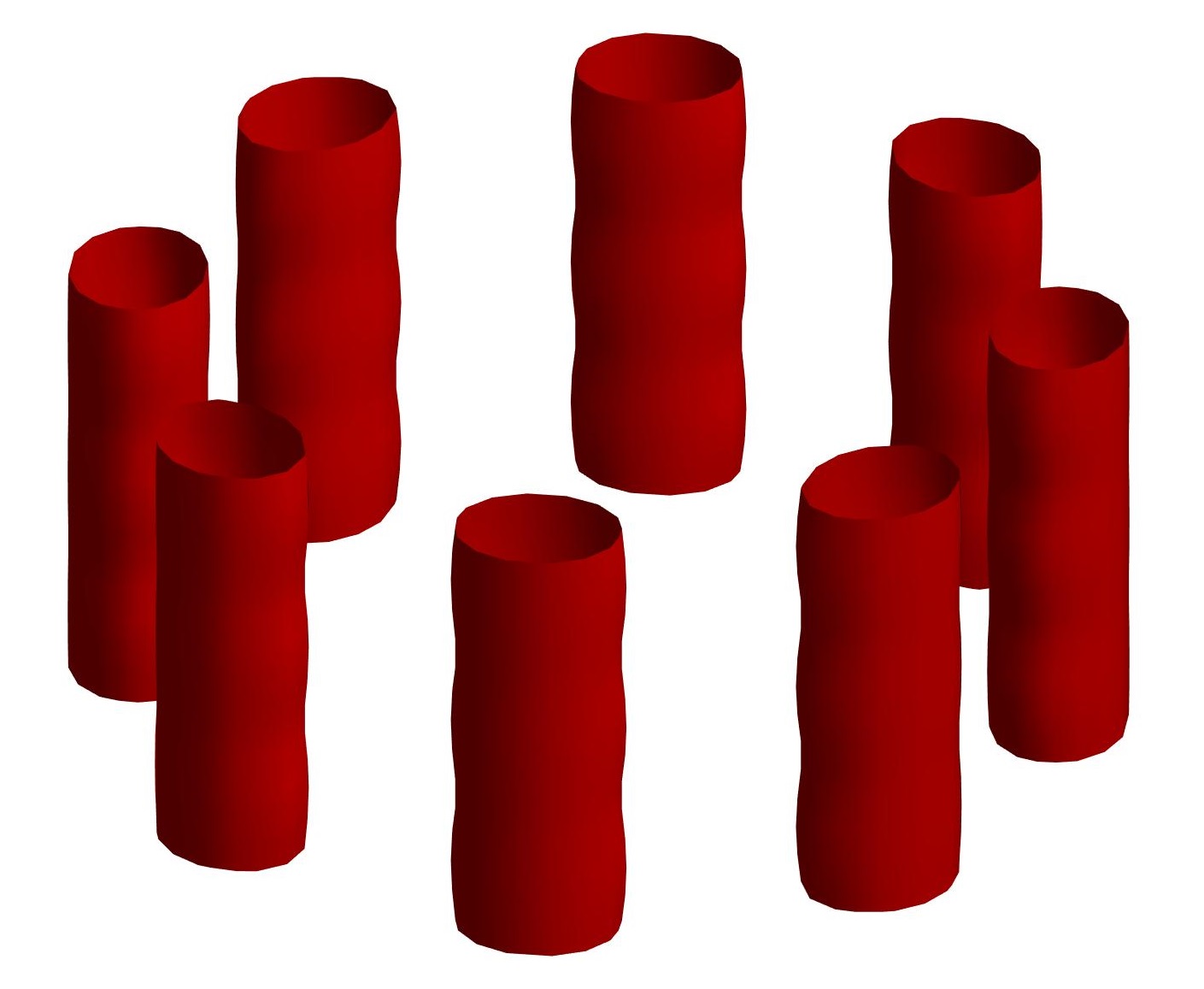}\hspace{0pt}
\includegraphics[height=110pt]{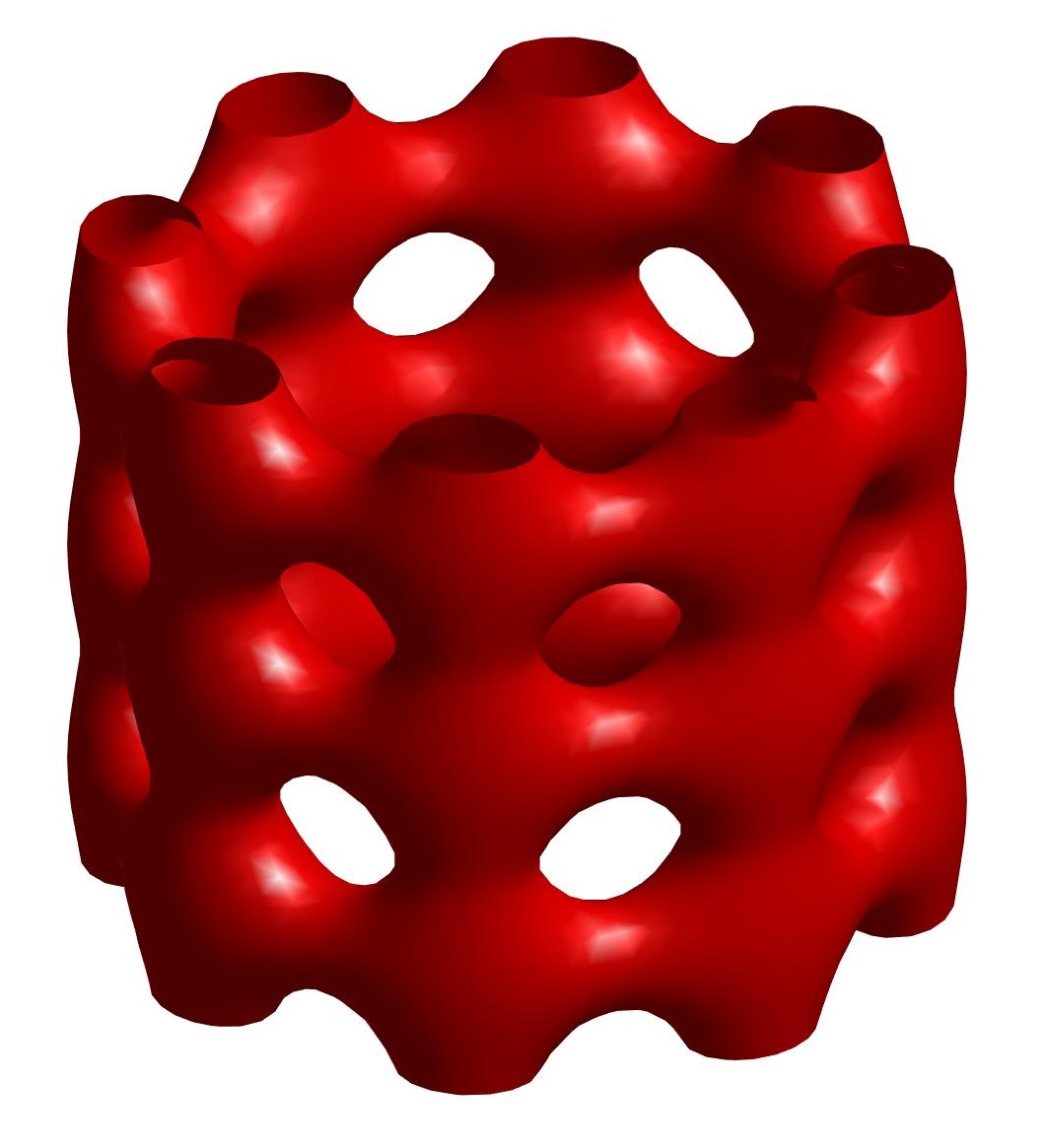}\hspace{0pt}
\includegraphics[height=110pt]{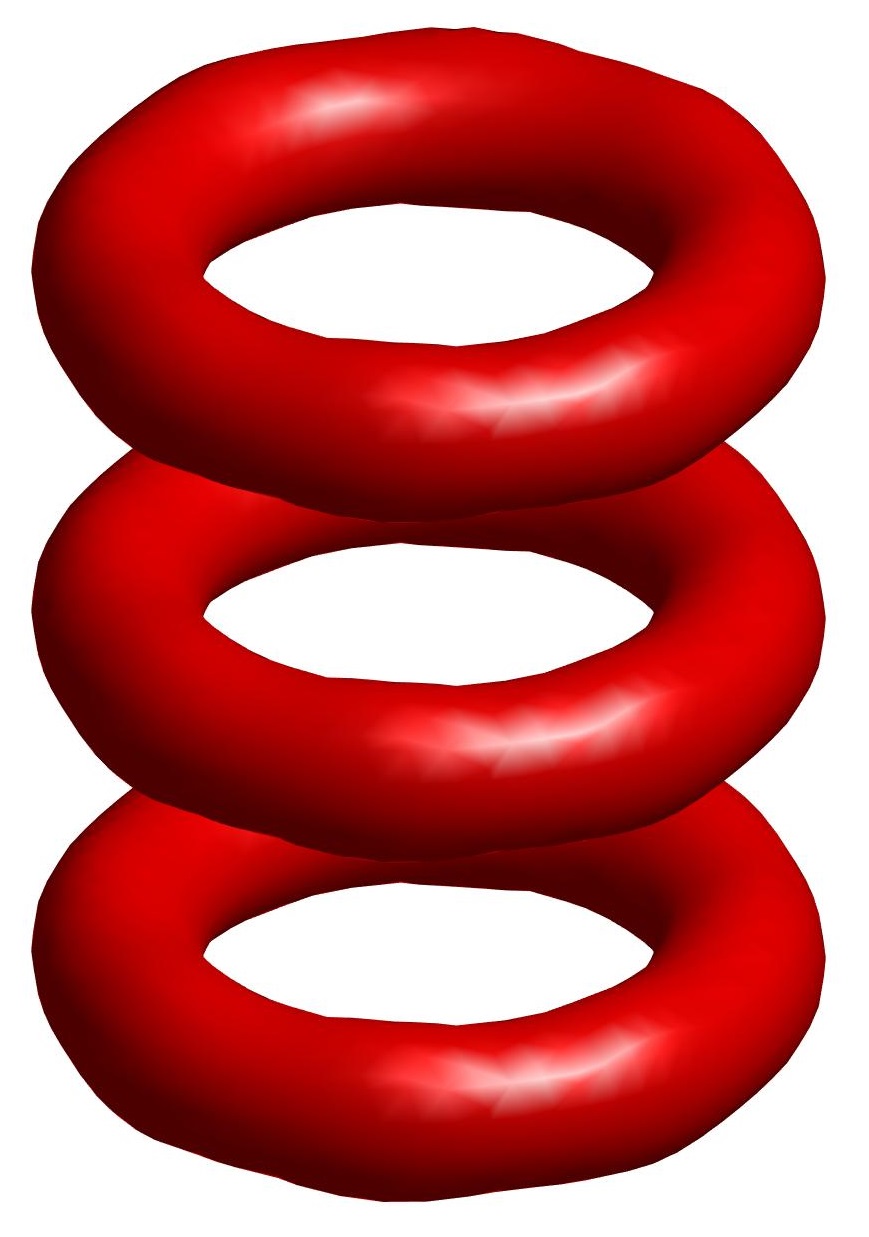}
\caption{Energy isosurfaces for $\ZZ_{8}^{(0)}$-symmetric 4-monopole chains with $c=1$ and $\beta/2\pi=0.07$ (left), $0.14$ (centre) and $0.28$ (right).  The range of the vertical axis is $3\beta$, so the image covers three ``periods''.  Images are not to scale.  The isosurface shown is where the energy density is 0.6 times its maximum value.}
\label{fig:l0}
\end{center}
\end{figure}

\begin{figure}[t]
\begin{center}
\includegraphics[height=144pt]{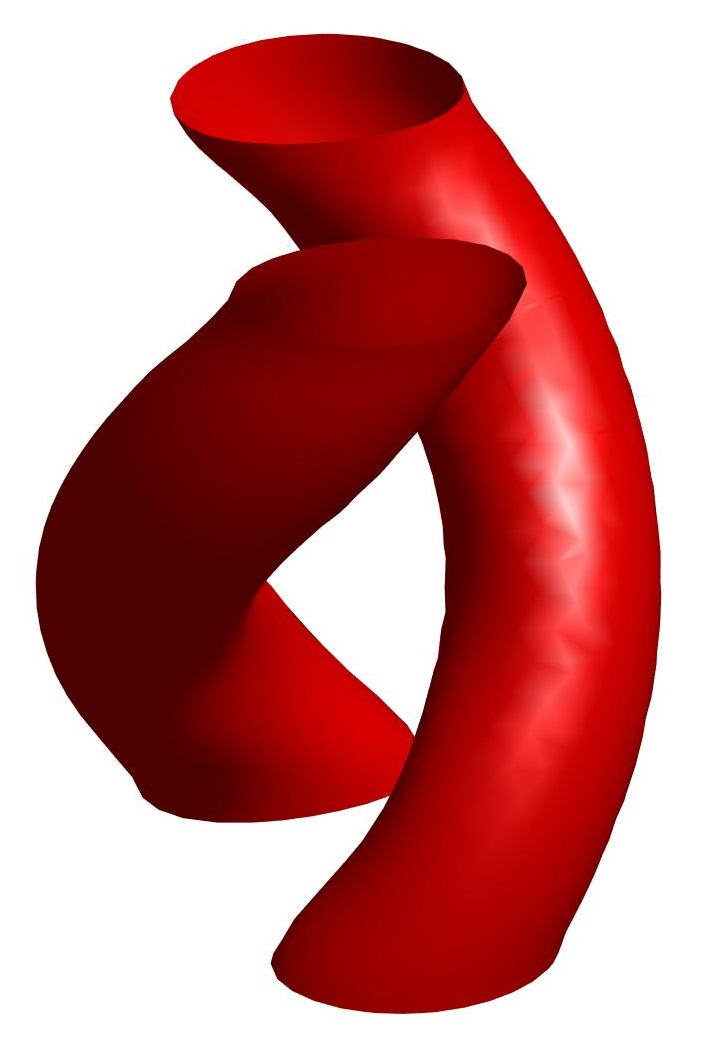}\hspace{18pt}
\includegraphics[height=144pt]{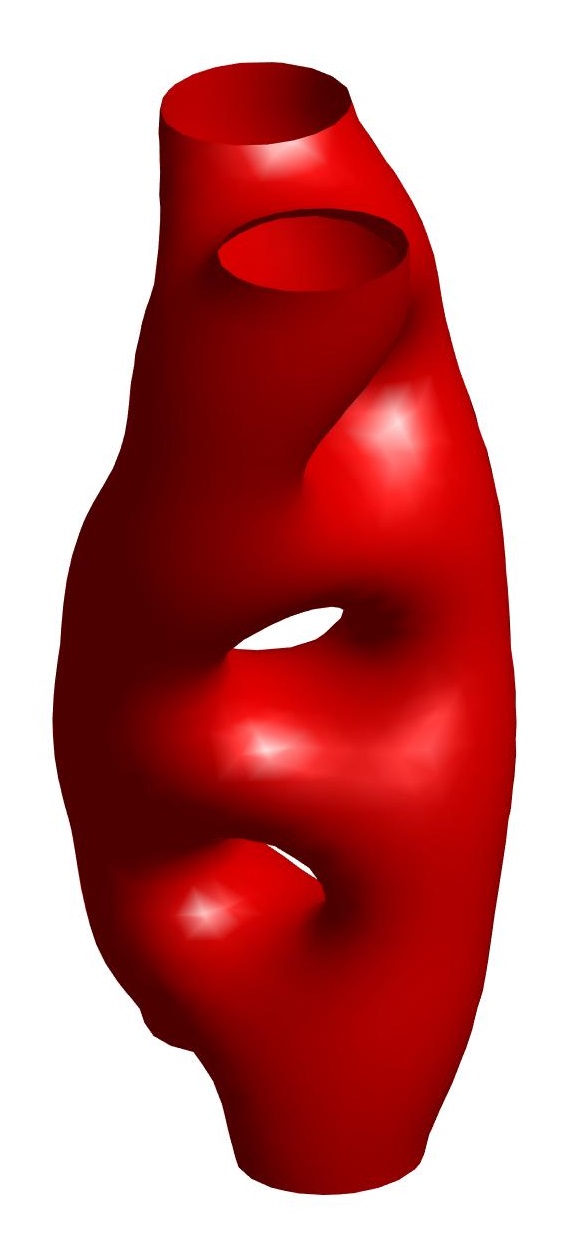}\hspace{18pt}
\includegraphics[height=144pt]{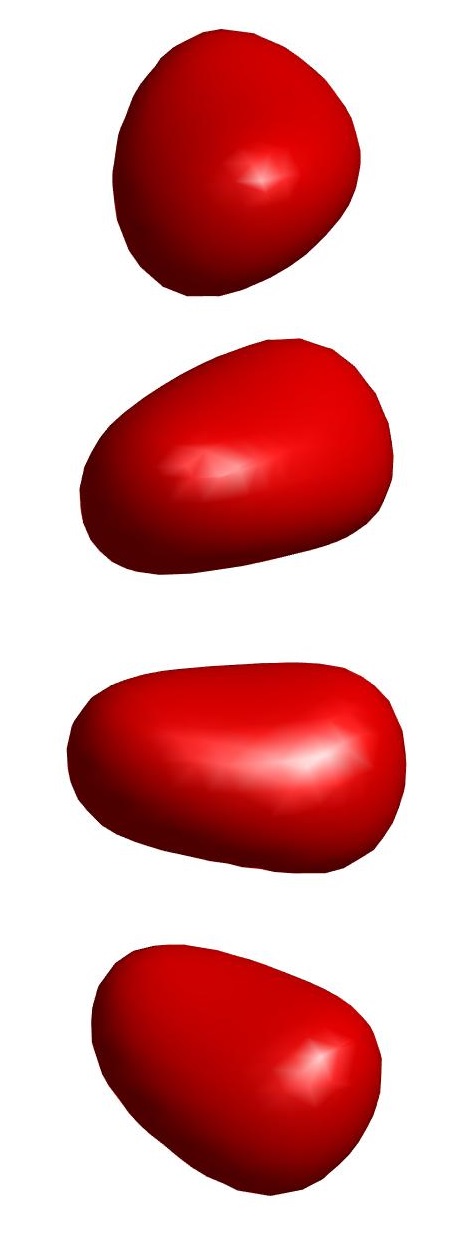}
\caption{Energy isosurfaces for $\ZZ_{8}^{(2)}$-symmetric 4-monopole chains with $c=1$ and $\beta/2\pi=1$ (left), $2$ (centre) and $3$ (right).  The range of the vertical axis is $\beta$.  Images are not to scale.  The isosurface shown is where the energy density is 0.6 times its maximum value.}
\label{fig:l1}
\end{center}
\end{figure}

\begin{figure}[t]
\begin{center}
\includegraphics[height=144pt]{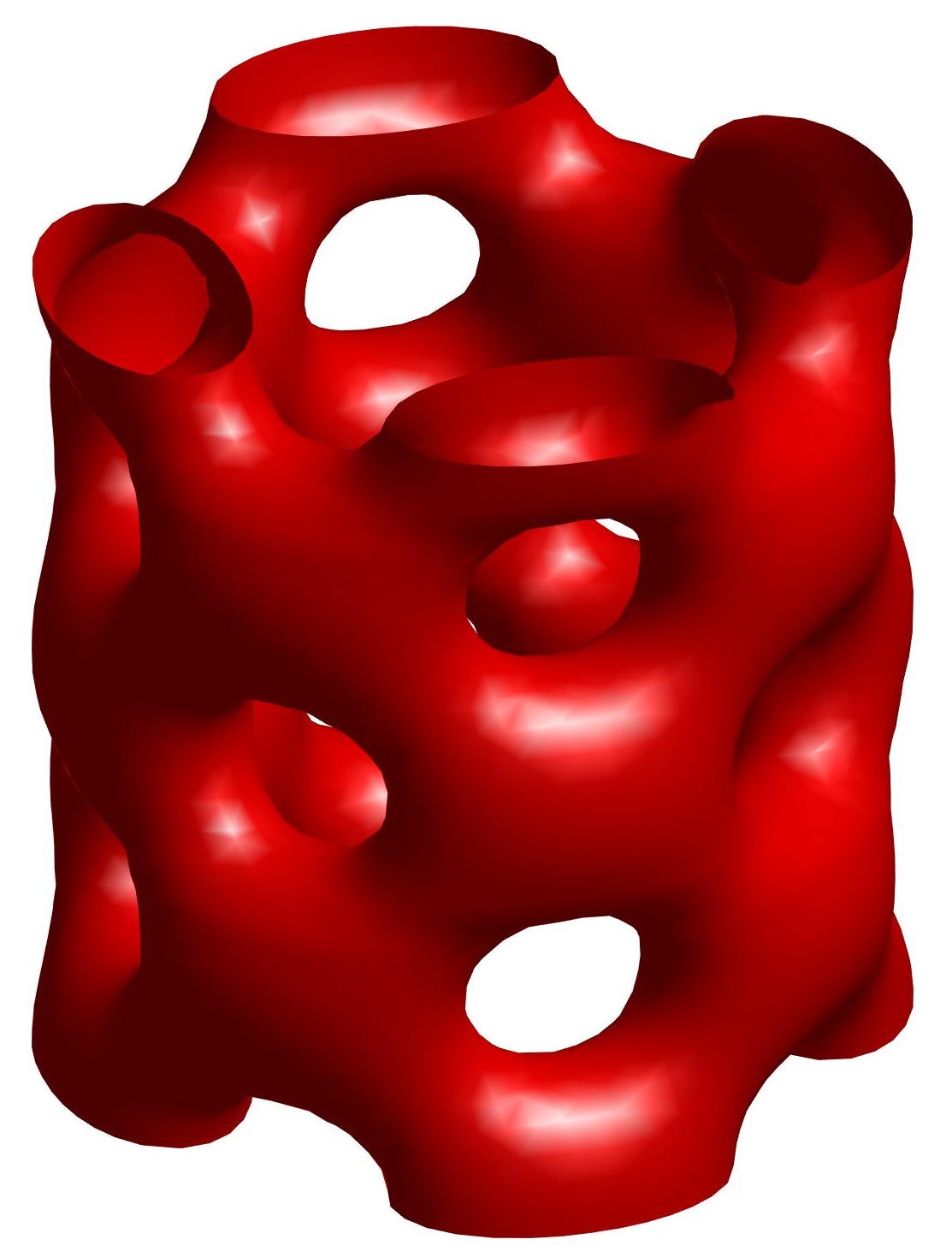}\hspace{18pt}
\includegraphics[height=144pt]{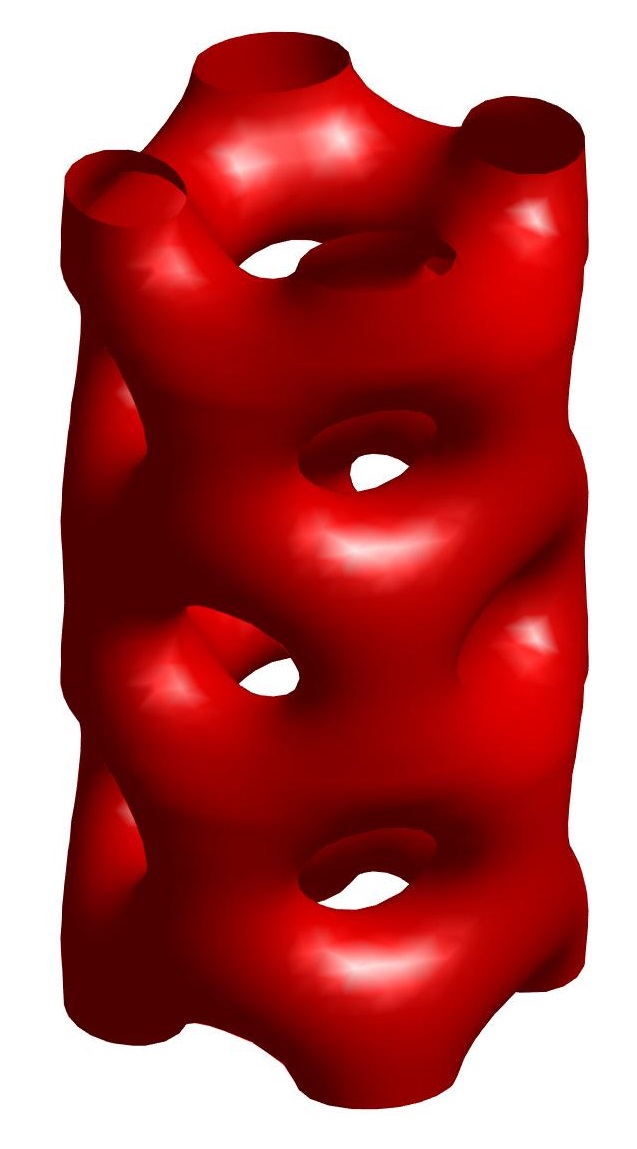}\hspace{24pt}
\includegraphics[height=144pt]{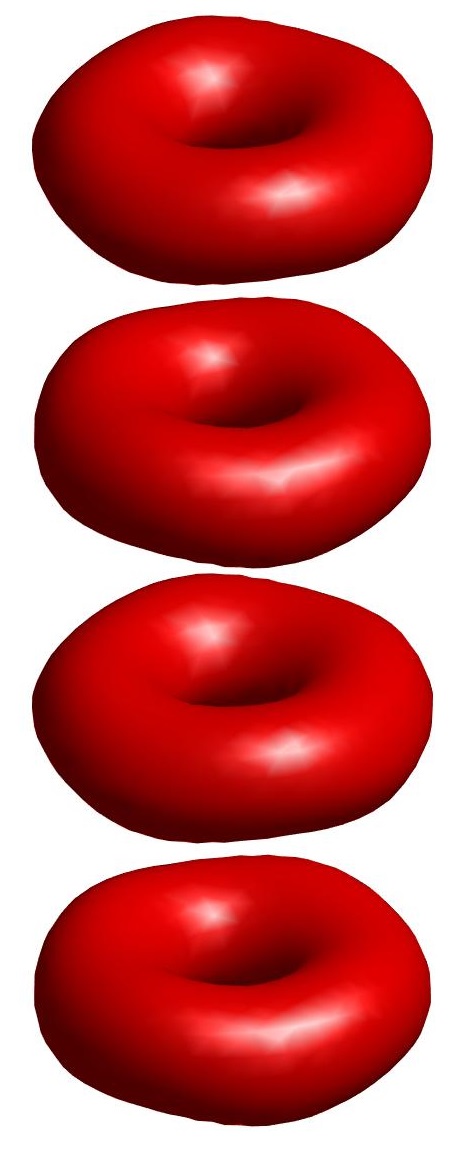}
\caption{Energy isosurfaces for $\ZZ_{8}^{(4)}$-symmetric 4-monopole chains with $c=1$ and $\beta/2\pi=0.3$ (left), $0.6$ (centre) and $1.2$ (right).  The range of the vertical axis is $2\beta$, so the image covers two ``periods''.  Images are not to scale.  The isosurface shown is where the energy density is 0.6 times its maximum value.}
\label{fig:l2}
\end{center}
\end{figure}

\subsection{Features of the monopole chains}

Energy density isosurfaces of monopole chains with $k=4$ and various values of $l$ and $\beta$ are shown in figures \ref{fig:l0}, \ref{fig:l1} and \ref{fig:l2}.  The $\ZZ_{8}^{(6)}$-symmetric chain is not shown, but this corresponds to a reflection of the $\ZZ_{8}^{(2)}$-symmetric chain in figure \ref{fig:l1}.  These are representative of images obtained for other values of $k$ (images of $\ZZ_4$-symmetric 2-monopole chains can be found in \cite{maldonado3}).  These images clearly exhibit the expected order 8 symmetry, and appear to have have additional symmetries: for example, figures \ref{fig:l0} and \ref{fig:l2} have some reflection symmetries.

For large values of $\beta$ the images resemble chains of well-separated monopoles, such that in the limit $\beta\to\infty$ one could obtain a monopole on $\mathbb{R}^3$.  The individual monopoles have charges 4, 1 and 2 in the cases of $\ZZ_{8}^{(0)}$, $\ZZ_{8}^{(2)}$ and $\ZZ_{8}^{(4)}$ symmetry.  The general pattern seems to be that a $\ZZ_{2k}^{(2l)}$-symmetric $k$-monopole chain breaks up into individual monopoles of charge $\gcd(k,l)$.  For small values of $\beta$ the monopole chains in figures \ref{fig:l0} and \ref{fig:l1} appear to separate to codimension-2 defects.

The middle pictures in figures \ref{fig:l0} and \ref{fig:l2} consist of cylindrical shells with hollow interiors, and can be considered examples of magnetic bags \cite{bolognesi}.  The magnitude of the scalar field $\hat{\phi}$ is close to zero on the interior of the shell, and seems to attain the value zero at isolated points on the central axis.  So these are ``cherry bags'', in the terminology introduced in \cite{bhs}.

Finally, we note that the middle image of figure \ref{fig:l2} is very similar to a picture of a Skyrme chain obtained in \cite{hw1}.  It remains to be seen whether any of the other monopole chains constructed here correspond to Skyrme chains.

\newpage

\appendix

\section{Proof of theorem \ref{thm:KH}, parts (i) and (ii)}

This appendix proves parts (i) and (ii) of theorem \ref{thm:KH} using results of Simpson \cite{simpson}.  We begin by considering part (ii), which concerns the behaviour of hermitian-Einstein metrics near parabolic points.

\begin{proposition}\label{prop:cyclic local phi}
Let $(E,\phi)$ be a rank $k$ cylinder Higgs bundle and let $z$ be a local holomorphic coordinate on $\CP^1$ such that $z=0$ is a parabolic point.  Then there exists a compatible holomorphic trivialisation for $E$ near 0 and local holomorphic functions $\phi_{-1},\phi_0,\ldots,\phi_{k-2}$ such that $\phi_{-1}(0)\neq0$ and
\begin{equation}\label{cyclic local phi}
\phi(z)=\sum_{i=-1}^{k-2}\phi_i(z)Z(z)^i\text{ where }Z(z) = \left(\begin{array}{ccc|c}0&\cdots&0&z\\ \hline &&&0 \\ &\mathrm{Id}_{k-1}&&\vdots \\ &&&0 \end{array}\right).
\end{equation}
\end{proposition}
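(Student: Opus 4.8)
The plan is to produce the required frame directly, by rectifying the ramification of the local spectral curve so that the fixed matrix $Z$ appears on the nose. First I would pass from $\phi$ to its inverse: by \eqref{phi normal} the endomorphism $\tilde\zeta:=\phi^{-1}$ is \emph{holomorphic} near $z=0$, and at $z=0$ it is the regular nilpotent sending $e_j\mapsto e_{j+1}$ (with $e_{k-1}\mapsto0$) in the frame of \eqref{eq:HBC3}. Thus $v:=e_0$ is a cyclic vector, $\{v,\tilde\zeta v,\dots,\tilde\zeta^{k-1}v\}$ is a local frame by Nakayama, and $E$ is locally free of rank one over the commutative ring $A:=\CC\{z\}[\tilde\zeta]$. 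Since the expressions $\sum_{i=-1}^{k-2}\phi_i(z)Z^i$ are precisely the endomorphisms in the Laurent extension of $\CC\{z\}[Z]$ carrying at most the prescribed simple pole, it suffices to exhibit a compatible frame in which multiplication by $\tilde\zeta$ becomes the fixed matrix $Z$.

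The key step is to standardise the ramification. From \eqref{eq:HBC3}--\eqref{eq:det phi} the characteristic polynomial of $\phi$ is $\zeta^k+\sum_{i<k}b_i(z)\zeta^i-c/z$ with $b_i$ holomorphic and $c\neq0$; multiplying the Cayley--Hamilton relation by $\tilde\zeta^k=\phi^{-k}$ yields $\tilde\zeta^k=z\,W(\tilde\zeta)$ for a unit $W\in A$ with $W(0)=1/c$. In particular the local spectral curve $\{\tilde\zeta^k=zW\}$ is smooth (the $z$-derivative of its defining equation is the unit $-W$), so $A\cong\CC\{\tilde\zeta\}$ is a local ring and, because $\CC$ is algebraically closed, the unit $W$ admits a holomorphic $k$-th root. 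I then set $\tilde\zeta':=\tilde\zeta\,W^{-1/k}$, a new uniformiser with $\tilde\zeta'=c^{1/k}\tilde\zeta+O(\tilde\zeta^{2})$ and, crucially, $(\tilde\zeta')^k=z$ \emph{exactly}.

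Finally I would assemble the frame and expand. Set $e_j:=(\tilde\zeta')^j v$ for $0\le j\le k-1$. Since $\tilde\zeta'$ has the same regular-nilpotent leading term as $\tilde\zeta$, this is again a frame, and because $(\mathcal E^{iP})_P=\phi^{-i}(\mathcal E_P)=(\tilde\zeta')^i A v$ one gets $E_P^i=\operatorname{span}\{e_i(0),\dots,e_{k-1}(0)\}$, so the frame is compatible. In this frame multiplication by $\tilde\zeta'$ is precisely $Z$ (as $\tilde\zeta' e_j=e_{j+1}$ and $\tilde\zeta' e_{k-1}=(\tilde\zeta')^kv=z\,e_0$), hence multiplication by $(\tilde\zeta')^{-1}$ is $Z^{-1}$. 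Writing $\phi=\tilde\zeta^{-1}=(\tilde\zeta')^{-1}W^{-1/k}$ and expanding the unit $W^{-1/k}=\sum_{j=0}^{k-1}\hat\kappa_j(z)(\tilde\zeta')^j$ (reducing higher powers via $(\tilde\zeta')^k=z$, with $\hat\kappa_0(0)=c^{1/k}$) gives $\phi=\sum_{i=-1}^{k-2}\phi_i(z)Z^i$ with $\phi_i=\hat\kappa_{i+1}$ holomorphic and $\phi_{-1}=\hat\kappa_0$ satisfying $\phi_{-1}(0)=c^{1/k}\neq0$.

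The main obstacle is the middle step: one must check that the local spectral curve is smooth with a single totally ramified branch over $z=0$, so that $\tilde\zeta'$ is a genuine coordinate and the $k$-th root $W^{-1/k}$ exists holomorphically. This is exactly what allows the \emph{fixed} matrix $Z$ (with $Z^k=z$ on the nose) to be produced, rather than merely a matrix satisfying $Z^k=z\cdot(\text{unit})$. A secondary point needing care is that the coordinate change $\tilde\zeta\mapsto\tilde\zeta'$ preserves compatibility of the frame with the parabolic filtration, which holds because $\tilde\zeta$ and $\tilde\zeta'$ generate the same ideals and hence the same stalks $(\mathcal E^{iP})_P$.
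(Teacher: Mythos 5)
Your proof is correct, and it takes a genuinely different route from the paper's. The paper passes to the $k$-fold ramified cover $z=cu^k$, asserts that $\phi$ admits an eigenvector $\sigma(u)$ with prescribed leading behaviour, forms the $k$ rotated eigenvectors $\sigma_i(u)=\sigma(\omega^iu)$, and defines a gauge transformation $g$ carrying the Vandermonde frame $\tau_i$ to $\sigma_i$; it must then check that $g$ is invariant under $u\mapsto\omega u$ (so that it descends to a function of $z$) and that $g(0)$ is invertible and lower triangular (so that the new trivialisation is compatible). You instead work with $\tilde\zeta=\phi^{-1}$, which by \eqref{phi normal} is holomorphic with regular nilpotent value at $z=0$, and exploit the cyclic module structure: the stalk is $Av$ with $A=\CC\{z\}[\tilde\zeta]$, Cayley--Hamilton gives $\tilde\zeta^{k}=zW$ with $W$ a unit, smoothness of the local spectral curve identifies $A$ with the discrete valuation ring $\CC\{\tilde\zeta\}$, and renormalising the uniformiser so that $(\tilde\zeta')^{k}=z$ exactly yields the frame $(\tilde\zeta')^{j}v$ in which multiplication by $\tilde\zeta'$ is precisely $Z$ and $\phi=(\tilde\zeta')^{-1}W^{-1/k}$ expands in the required form. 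Both arguments rest on the same geometric fact --- the local spectral curve is smooth and totally ramified over the parabolic point --- but yours is the local form of the spectral correspondence and is self-contained: it replaces the paper's unproved eigenvector asymptotics (``it can be shown that...'') and the descent and triangularity checks for $g$ by algebra, and compatibility of your frame comes essentially for free from condition (i) of proposition \ref{prop:HBC}, since $(\mathcal{E}^{iP})_P=\tilde\zeta^{i}\mathcal{E}_P=(\tilde\zeta')^{i}Av$. Two cosmetic points: the $z$-derivative of $T^{k}-zW(z,T)$ is $-W-z\partial_z W$ rather than $-W$, but this still equals $-1/c\neq0$ at the origin, which is all you need; and the freeness of $A$ as a $\CC\{z\}$-module on $1,\tilde\zeta',\ldots,(\tilde\zeta')^{k-1}$, which underlies your final expansion of $W^{-1/k}$, follows from monicity of the characteristic polynomial via Weierstrass division. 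The paper's eigenvector formulation has the mild advantage of making explicit the diagonalisation that reappears in section \ref{sec:5}, but as a proof of this proposition your argument is at least as complete.
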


\begin{proof}
We start by choosing a compatible trivialisation of $E$ near $z=0$ as in part (iii) of proposition \ref{prop:HBC}.  The eigenvalues and eigenvectors of $\phi$ are not single-valued functions of $z$, so we work on the $k$-fold covering with coordinate $u$, such that $z=cu^k$.  Then it can be shown that $\phi(cu^k)$ has an eigenvector of the form
\[ \sigma(u) = \begin{pmatrix} u^{k-1}+O(u^{k}) \\ \vdots \\ u+O(u^2) \\ 1 + O(u) \end{pmatrix}. \]
(The corresponding eigenvalue is $1/u+O(1)$.)  Let $\omega=e^{2\pi\ii/k}$; then $\sigma_i(u):=\sigma(\omega^iu)$ for $i=0,\ldots,k-1$ are eigenvectors of $\phi$ and form a local frame.

Let $\tau_i(u)$ be the frame
\[ \tau_i(u) = \begin{pmatrix} (\omega^i u)^{k-1} \\ \vdots \\ \omega^i u \\ 1 \end{pmatrix},\quad i=0,\ldots,k-1, \]
and let $g(u)$ be the invertible matrix-valued function such that $g(u)\tau_i(u)=\sigma_i(u)$.  Then 
\[ g(\omega u)\tau_i(u) = g(\omega u)\tau_{i-1}(\omega u) = \sigma_{i-1}(\omega u) = \sigma_i(u) = g(u)\tau_i(u),\]
so $g(\omega u)= g(u)$ and $g$ can be written as a function of $z=cu^k$.  Therefore $g$ defines a change of trivialisation away from the point $z=0$.  By construction, the eigenvectors of $g^{-1}\phi g$ are $\tau_i(u)$, and it follows that $g^{-1}\phi g$ can be written in the form \eqref{cyclic local phi}.

It remains to show that the trivialisation defined by $g$ is compatible with the parabolic structure at $z=0$.  It suffices to show that $g$ extends to $z=0$, and that $g(0)$ is invertible and lower-triangular.

To this end, note that $\tau_i$ can be written $\tau_i=\sum_je_jV_{ji}$, where $e_j$ are the standard basis vectors for $\CC^k$ and $V_{ji}=(\omega^iu)^{k-1-j}$ are entries of a Vandermonde matrix (with $0\leq i,j<k$).  %It follows that $ge_i=g\tau_j(V^{-1})_{ji}=\sigma_j(V^{-1})_{ji}$.  
From definition of $\sigma_i(u)$ above, and the fact that $(\omega^ju)^{k-1-l}(V^{-1})_{ji}=V_{lj}V^{-1}_{ji}=\delta_{li}$, we obtain that
\[
ge_i = g\tau_j(V^{-1})_{ji}=\sigma_j(V^{-1})_{ji} = \begin{pmatrix}  \vdots \\ O(u^k) \\ 1 \\ O(1) \\ \vdots \end{pmatrix},
\]
with ``1'' in the $i$th position.  Thus at the point $cu^k=z=0$, $g$ is a lower triangular matrix.
%Let for $i=0,\ldots,k-1$ let $d_i$ be the vector with one in its $i$th position and zeros elsewhere.  Then
%\[ d_i=\frac{1}{k}u^{i-k+1}\sum_{j=0}^{k-1}\omega^{(i-k+1)j} \tau_j(u), \]
%because roots of unity sum to zero.  Note that given an analytic function $f(u)=\sum_{l\geq 0} f_lu^l$, the sum $\frac{1}{k}u^{i-k+1}\sum_{j=0}^{k-1}\omega^{(i-k+1)j}f(\omega^j u)$ is an analytic function of $u^k$ whose value at 0 is $f_{k-1-i}$.  From this observation it follows that
%\[ g(u) d_i = \frac{1}{k}u^{i-k+1}\sum_{j=0}^{k-1}\omega^{(i-k+1)j} \sigma(\omega^ju)  = \begin{pmatrix}  \vdots \\ O(z) \\ 1 \\ O(1) \\ \vdots \end{pmatrix}, \]
%%\begin{pmatrix} 0(z) \\ \vdots \\ 0(z) \\ 1 \\ O(1) \\ \vdots \end{pmatrix}, \]
%with ``1'' in the $i$th position.  Therefore $g(0)$ is lower-triangular and invertible as required.

Finally, we note that the holomorphic function $\phi_{-1}(z)$ in equation \eqref{cyclic local phi} cannot vanish at 0, because by definition $\phi$ has a pole at the origin.
\end{proof}

Consider the following metric, defined near a parabolic point $P$ using the trivialisation provided by proposition \ref{prop:cyclic local phi}:
\begin{equation}\label{def:h0}
h_P = \diag(|z|^{2\alpha^0_P},\ldots,|z|^{2\alpha^{k-1}_P}).
\end{equation}
This is compatible with the parabolic structure at 0.  The Chern connection of this metric is
\begin{equation}
A^{h_P}=h_P^{-1}\pa h_P = \diag(\alpha^0_P,\ldots,\alpha^{k-1}_P)\frac{\dd z}{z}.
\end{equation}
It has zero curvature, and non-trivial holonomy about the parabolic point (as the $\alpha^i_P$ are not all zero).

It will prove convenient to consider the same metric in an alternative, non-periodic gauge.  Let $s=r+\ii t$ be a local holomorphic coordinate such that $z=e^{-\beta s}$.  We apply a holomorphic gauge transformation $\phi\mapsto g^{-1}\phi g$, $h_P\mapsto g^\dagger h_P g$ with
\begin{equation}
g=\exp\left(\diag(\alpha_P^0,\ldots\alpha_P^{k-1})\beta s \right).
\end{equation}
The resulting expressions for $\phi$, $h_P$ and $A^{h_P}$ are
\begin{equation}
\label{quasi-periodic 1}
\phi=\sum_{j=-1}^{k-2}\phi_j(e^{-\beta s})e^{-\beta sj/k}\Sh^j,\quad h_P=\mathrm{Id}_k,\quad A^{h_P}=0,
\end{equation}
where
\begin{equation}
\label{def:shift}
\Sh=\left(\begin{array}{ccc|c}0&\cdots&0&1\\ \hline &&&0 \\ &\mathrm{Id}_{k-1}&&\vdots \\ &&&0 \end{array}\right).
\end{equation}
This gauge is quasi-periodic, in the sense that local sections of $E$ are represented by vector-valued functions $v(r,t)$ satisfying
\begin{equation}
\label{quasi-periodic 2}
v(r,t+2\pi/\beta) = \exp(-2\pi\ii\diag(\alpha_P^0,\ldots,\alpha_P^{k-1})) v(r,t).
\end{equation}
Note that $H^{\ast h_P}=H^{\dagger}=H^{-1}$.  It follows that $[\phi,\phi^{\ast h_P}]=0$.  Moreover, since $F^{h_P}=0$ the metric $h_P$ is hermitian-Einstein.

We will use the trivialisation just defined to study the decay properties of a second hermitian-Einstein metric.  Before doing so we state and prove a useful lemma:
\begin{lemma}\label{lemma:decay}
\begin{enumerate}
\item[(a)] Let $f:[0,\infty)\to\RR_{\geq 0}$ be a bounded non-negative real function satisfying $f''\geq m^2 f$ for some $m>0$.  Then $f'(r)\leq 0$ for all $r$ and $f(r)=O(e^{-mr})$ as $r\to\infty$.
\item[(b)] Let $F:[0,\infty)\times S^1\to\RR_{\geq 0}$ be a bounded non-negative real function satisfying $\triangle F \geq 0$, such that $\int_{S^1}F(r,t)\dd t=O(e^{-mr})$ as $r\to\infty$ for some $m>0$.  Then $\sup_t F(r,t)=O(e^{-mr})$ as $r\to\infty$.
\item[(c)] Let $F:[0,\infty)\times S^1\to\RR_{\geq 0}$ be a bounded non-negative real function satisfying $\triangle F \geq m^2 F$ for some $m>0$.  Then $\sup_t F(r,t)=O(e^{-mr})$ as $r\to\infty$.
\end{enumerate}
\end{lemma}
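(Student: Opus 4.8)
The plan is to treat the three parts in the order (a), (b), then (c), with (c) following immediately by combining the first two applied to a circle-average. Throughout I assume $f$ and $F$ are $C^2$, so the differential inequalities are classical.

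For part (a), first note that $f''\geq m^2 f\geq 0$, so $f$ is convex; since a bounded convex function on $[0,\infty)$ cannot have positive slope anywhere (else it would grow without bound), we get $f'\leq 0$. Being non-decreasing (as $f''\geq 0$) and bounded above by $0$, the derivative $f'$ converges to some $\ell\leq 0$, and $\ell<0$ would force $f\to-\infty$, contradicting $f\geq 0$; hence $f'\to 0$. Now introduce the integrating-factor quantity $v:=f'+mf$. A direct computation gives $(e^{-mr}v)'=e^{-mr}(v'-mv)=e^{-mr}(f''-m^2f)\geq 0$, so $e^{-mr}v$ is non-decreasing; since $f$ is bounded and $f'\to 0$, its limit as $r\to\infty$ is $0$, whence $e^{-mr}v\leq 0$ for all $r$. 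Thus $f'+mf\leq 0$, which gives $(e^{mr}f)'=e^{mr}(f'+mf)\leq 0$, so $e^{mr}f$ is non-increasing and $f(r)\leq f(0)e^{-mr}=O(e^{-mr})$.

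For part (b), the key tool is the sub-mean-value inequality for subharmonic functions: since $\triangle F\geq 0$, for any point $(r_0,t_0)$ with $r_0\geq 1$ the value $F(r_0,t_0)$ is at most the average of $F$ over the unit disc centred there. Bounding that disc integral by the integral over the strip $[r_0-1,r_0+1]\times S^1$, which contains the disc, and using Fubini together with the hypothesis $\int_{S^1}F(r,\cdot)=O(e^{-mr})$, we obtain
\[
F(r_0,t_0)\leq \frac1\pi\int_{[r_0-1,r_0+1]\times S^1}F\leq \frac1\pi\int_{r_0-1}^{r_0+1}\Big(\int_{S^1}F(r,t)\,\dd t\Big)\dd r=O(e^{-mr_0}),
\]
uniformly in $t_0$. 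Hence $\sup_t F(r,t)=O(e^{-mr})$.

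Part (c) then reduces to the previous two. Set $\bar F(r):=\int_{S^1}F(r,t)\,\dd t$. Differentiating under the integral and using $\int_{S^1}\partial_t^2 F\,\dd t=0$ on the circle, one finds $\bar F''=\int_{S^1}\triangle F\,\dd t\geq m^2\int_{S^1}F\,\dd t=m^2\bar F$; moreover $\bar F$ is bounded and non-negative, so part (a) gives $\bar F(r)=O(e^{-mr})$. Since $\triangle F\geq m^2 F\geq 0$ makes $F$ subharmonic, part (b) applies and yields $\sup_t F(r,t)=O(e^{-mr})$. The main obstacle is part (b): parts (a) and (c) are essentially ODE and averaging manipulations, but the passage from decay of the circle-average to decay of the pointwise supremum genuinely requires an elliptic estimate, for which the sub-mean-value inequality is the cleanest route. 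The minor technical points to handle are ensuring the comparison disc fits inside the half-cylinder (hence restricting to $r_0\geq 1$, harmless for an $O(e^{-mr})$ statement as $r\to\infty$) and checking that the bound survives when the disc wraps around $S^1$, which it does since the disc remains contained in the strip.
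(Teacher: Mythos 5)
Your proof is correct and follows essentially the same route as the paper's: part (a) rests on showing $f'+mf\leq 0$ and integrating the factor $e^{mr}f$, part (b) is the sub-mean-value inequality for subharmonic functions combined with Fubini and the hypothesis on the circle average, and part (c) reduces to (a) and (b) via the circle average $\bar F$, exactly as in the paper. The only cosmetic differences are that the paper deduces $f'+mf\leq 0$ by a direct exponential-growth contradiction rather than via your convexity argument, and in (b) it averages over a small disc of radius $\epsilon$ (so the disc embeds in the cylinder), sidestepping the wrapping issue you handle by remark -- where, strictly speaking, one needs bounded covering multiplicity of the projected disc rather than mere containment of its image in the strip, though this only costs a harmless constant.
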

\begin{proof}
(a) Consider the function $g(r)=f'(r)+mf(r)$.  This satisfies $g'\geq mg$.  Suppose that $g(a)>0$ for some $a\in[0,\infty)$.  Then $g(r)\geq g(a)e^{m(r-a)}$ for all $r\geq a$.  This implies that $f$ grows exponentially and contradicts the boundedness of $f$.  So $f'(r)+mf(r)\leq 0$ for all $r$.  It follows that $f'\leq 0$, and that $f=O(e^{-mr})$.

(b) Choose a positive constant $C$ such that $\int_{S^1}F(r,t)\dd t\leq Ce^{-mr}$.  Since $\triangle F\geq0$, the value of $F$ at any point $r_0,t_0$ is bounded from above by its average over a ball of radius $\epsilon>0$.  Therefore
\begin{multline}
F(r_0,t_0)\leq \frac{1}{\pi\epsilon^2}\int_{B_\epsilon(r,t)}F(r,t)\dd r\,\dd t \leq \frac{1}{\pi\epsilon^2}\int_{|r-r_0|\leq \epsilon} F(r,t)\dd r\,\dd t \\
\leq \frac{1}{\pi\epsilon^2}\int_{r_0-\epsilon}^{r_0+\epsilon} Ce^{-mr}\dd r=O(e^{-mr}).
\end{multline}

(c) Consider the function $f(r)=\int_{S^1}F(r,t)\dd t$.  This satisfies $f''(r)\geq m^2f(r)$, so by part (a) $f(r)=O(e^{-mr})$ as $r\to\infty$, and by part (b) $\sup_t F(r,t)=O(e^{-mr})$ as $r\to\infty$.
\end{proof}

Now we consider the decay properties of a hermitian-Einstein metric.  In order to state the next result we need to recall the definition of the Donaldson-Simpson functional.  Let $(E,\Phi)$ be a Higgs bundle over a (possibly non-compact) Riemann surface $\RS$.  Let $h_0$ and $h_1$ be two hermitian metrics such that $h_1(\cdot,\cdot)=h_0(\cdot,e^{\psi}\cdot)$ for some section $\psi$ of $\End(E)$ which his hermitian with respect to $h_0$.  Choose a local frame for $E$ consisting of eigenvectors of $\psi$ which is orthonormal with respect to $h_0$, and let $\lambda_i$ denote the associated eigenvalues.  Given any section $X$ of $\mathrm{End}(E)$, we denote the matrix components of $X$ with respect to this basis by $X_{ij}$.  Let $\rho$ be the real analytic function
\begin{equation}\label{def:f}
\rho(x)=\begin{cases} \frac{e^x-x-1}{x^2} & x\neq0 \\ \frac12 & x=0 \end{cases}
\end{equation}
The Donaldson-Simpson density is defined by
%\begin{multline}\label{DS}
%\mathcal{DS}(h_0,h_1) = \ii\Tr(\psi (F^{h_0}+\Phi\wedge\Phi^{\ast h_0}))\\
% + \sum_{i,j}\rho(\lambda_j-\lambda_i)\left\{\bar{\pa}\psi_{ji}\wedge\ast(\bar{\pa}\psi)^{\ast h_0}_{ij} + [\Phi,\psi]_{ji}\wedge\ast[\Phi,\psi]^{\ast h_0}_{ij}\right\},
%\end{multline}
\begin{align}\label{DS}
\mathcal{DS}(h_0,h_1) &= \ii\Tr(\psi\mathcal{F}^{h_0})-\ii\sum_{i,j}\rho(\lambda_j-\lambda_i)D''\psi_{ji}\wedge D'_{h_0}\psi_{ij} \\
&= \ii\Tr(\psi (F^{h_0}+\Phi\wedge\Phi^{\ast h_0}))\nonumber \\
& + \sum_{i,j}\rho(\lambda_j-\lambda_i)\left\{\bar{\pa}\psi_{ji}\wedge\ast(\bar{\pa}\psi)^{\ast h_0}_{ij} + [\Phi,\psi]_{ji}\wedge\ast[\Phi,\psi]^{\ast h_0}_{ij}\right\}.\nonumber
\end{align}
Here $\ast$ denotes the Hodge star with respect to some metric compatible with the complex structure; in particular, $\ast$ acts as multiplication by $-\ii$ (resp.\ $\ii$) on $\Lambda^{1,0}$ (resp.\ $\Lambda^{0,1}$).  Note that the 2-form $\mathcal{DS}(h_0,h_1)$ does not not depend on the choice of orthonormal basis, and is thus defined globally, even though the orthonormal frame may exist only locally.  The Donaldson-Simpson functional is defined by
\begin{equation}
DS(h_0,h_1) = \int_\RS \mathcal{DS}(h_0,h_1)
\end{equation}
Simpson proved the existence of hermitian-Einstein metrics on stable Higgs bundles by studying the gradient flow for this functional.  Note that although our definition of the Donaldson-Simpson functional involves a choice of metric, it is independent of this choice since the action of $\ast$ on 1-forms is conformally invariant.

We are now ready to state our first result on the decay of hermitian-Einstein metrics:
\begin{proposition}\label{prop:decay 1}
.Let $E$ be a cylinder Higgs bundle and let $z=e^{-\beta s}$ be a local holomorphic coordinate such that $z=0$ is a parabolic point $P$, and write $s=r+\ii t$ for real coordinates $r,t$.  Let $h_P$ be a hermitian metric defined as in \eqref{def:h0} and let $\psi$ be a bounded traceless self-adjoint section of $\End(E)$ such that $h(\cdot,e^\psi\cdot)$ is a hermitian-Einstein metric.  Then
\begin{enumerate}
\item[(a)] $\psi$ decays uniformly and exponentially in $r$ as $r\to\infty$;
\item[(b)] $\int_{\{r\geq R\}}\mathcal{DS}(h_P,h)<\infty$ for sufficiently large $R$; and
%\item[(c)] if $D' = \pa^{h_P}+\Phi^{\ast h_P}$ and $D'' = \bar{\pa}+\Phi$ then $|D''\psi|_{h_P}^2$ and $|D''D'\psi|_{h_P}$ are integrable functions.  (Here norms and integrals are defined using the cylindrical metric $\dd r^2+\dd t^2$).
\item[(c)] $|D''\psi|^2$ and $|D''D'_{h_P}\psi|$ are integrable functions, where the norms and integrals are defined using the metric $h_P$ and the cylindrical metric $\dd r^2+\dd t^2$.
\end{enumerate}
\end{proposition}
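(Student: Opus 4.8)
The plan is to work throughout in the quasi-periodic gauge of \eqref{quasi-periodic 1}--\eqref{quasi-periodic 2}, in which $h_P=\mathrm{Id}_k$, the Chern connection of $h_P$ vanishes, and $[\phi,\phi^\dagger]=0$, so that $h_P$ is itself hermitian--Einstein. Writing $H=e^\psi$ for the positive, $h_P$-self-adjoint endomorphism relating the two metrics, the two conditions $\mathcal{F}^{h_P}=0$ and $\mathcal{F}^{h}=0$ combine into the single exact identity $D''\big(H^{-1}D'_{h_P}H\big)=0$, which is the elliptic equation for $\psi$ that drives the whole argument. Expanding to leading order gives $D''D'_{h_P}\psi=-D''R$, where $R:=H^{-1}D'_{h_P}H-D'_{h_P}\psi$ collects the terms at least quadratic in $\psi$ and its derivatives.

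For (a) I would derive from this equation, together with a Bochner identity, a differential inequality of the form
\begin{equation*}
\triangle\,\Tr(\psi^2)\;\geq\;2|\bar\partial\psi|^2+2|[\phi,\psi]|^2+2\langle\psi,\mathcal{L}\psi\rangle,\qquad \mathcal{L}:=[\phi,[\phi^\dagger,\cdot]]+[\phi^\dagger,[\phi,\cdot]],
\end{equation*}
where $\mathcal{L}\geq0$ is the mass operator supplied by the Higgs field, and then show the right-hand side is bounded below by $m^2\Tr(\psi^2)$ for a fixed $m>0$ once $r$ is large. Lemma~\ref{lemma:decay}(c), applied to $F=\Tr(\psi^2)$, then yields exponential decay, and elliptic regularity for $D''(H^{-1}D'_{h_P}H)=0$ upgrades this to uniform exponential decay of $\psi$ and of all its derivatives. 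The mass $m$ has two sources. Since the weights differ by $1/k$ by \eqref{PHB weights definition}, \eqref{quasi-periodic 2} shows that every off-diagonal entry of $\psi$ is a section with nontrivial monodromy, so its $t$-Fourier frequencies are bounded away from $0$ by $\beta/k$, giving a spectral gap for $-\partial_t^2$. The diagonal, $t$-independent part of $\psi$ has no such gap, and here one uses that $\phi$ is regular near the puncture (Proposition~\ref{prop:cyclic local phi}): because its leading term is the cyclic matrix $\Sh^{-1}$, the commutator $[\phi,\psi]$ is nonzero for every nonzero traceless diagonal $\psi$, so $\mathcal{L}$ supplies a very large mass there.

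The main obstacle is to combine these two mechanisms into one uniform lower bound $m>0$, since the monodromy gap lives in the standard frame while $\mathcal{L}$ is diagonal in the $t$-dependent eigenframe of $\phi$, and these frames cannot be diagonalised simultaneously. The key structural fact is that a direction can be massless for $\mathcal{L}$ only if it commutes with $\phi$, i.e.\ lies in $\mathrm{span}\{\mathrm{Id},\phi,\ldots,\phi^{k-1}\}$; but the monodromy of $\phi$ cyclically permutes its $k$ eigensheets (the eigenvalues behave like $\omega^p e^{\beta s/k}$), so each such abelian direction still carries monodromy frequency at least $\beta/k$. Hence no nonzero direction is massless for both operators, and one obtains a uniform $m>0$. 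I expect verifying this non-degeneracy carefully, uniformly in $r$, to be the technically delicate step.

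Parts (b) and (c) follow by integrating the Bochner inequality. Over the truncated cylinder $\{R\leq r\leq R'\}\times S^1$ the divergence theorem gives $\int(|\bar\partial\psi|^2+|[\phi,\psi]|^2)\lesssim\int\triangle\Tr(\psi^2)$, which equals boundary terms, and the contribution at $r=R'$ vanishes as $R'\to\infty$ by the exponential decay of $\psi$ and $\partial_r\psi$ from (a); this proves integrability of $|D''\psi|^2=|\bar\partial\psi|^2+|[\phi,\psi]|^2$, half of (c). For (b) I would substitute into \eqref{DS}: the term $\ii\Tr(\psi\mathcal{F}^{h_P})$ vanishes since $\mathcal{F}^{h_P}=0$, and as $\psi$ is bounded the weights $\rho(\lambda_j-\lambda_i)$ are bounded, so $\mathcal{DS}(h_P,h)$ is controlled pointwise by $|\bar\partial\psi|^2+|[\Phi,\psi]|^2$, integrable over $\{r\geq R\}$ by the previous step. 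The remaining integrability of $|D''D'_{h_P}\psi|=|D''R|$ is the one place where the growth $|\phi|\sim e^{\beta r/k}$ must be tamed: the dangerous factors $[\Phi,\cdot]$ and $[\Phi^\dagger,\cdot]$ annihilate exactly the slowly-decaying $\phi$-commuting directions identified in (a), so every surviving term pairs the growth of $\phi$ with a component of $\psi$ that decays fast enough to win, giving an integrable bound.
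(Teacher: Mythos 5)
Your overall strategy---a Bochner inequality for $\Tr(\psi^2)$, with mass supplied partly by the commutator with the leading cyclic part of $\phi$ and partly by the monodromy of the quasi-periodic gauge---is exactly the strategy of the paper's proof. But there is a genuine gap at the crucial step: you claim a \emph{pointwise} lower bound, that the right-hand side of the Bochner inequality dominates $m^2\Tr(\psi^2)$ for large $r$, so that Lemma \ref{lemma:decay}(c) applies to $F=\Tr(\psi^2)$. This pointwise bound is not obtainable, because the monodromy mechanism is not a pointwise mass. Quasi-periodicity of an entry $\psi_{ij}$ with phase $e^{2\pi\ii(j-i)/k}$ gives only the Wirtinger-type inequality $\int_{S^1}|\pa_t\psi_{ij}|^2\,\dd t\geq (\beta/k)^2\int_{S^1}|\psi_{ij}|^2\,\dd t$; pointwise, $\pa_t\psi$ can vanish at points where $\psi\neq0$. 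Concretely, a section valued near a $\phi$-commuting direction (a combination of powers of $\Sh$, which is massless for your operator $\mathcal{L}$) can have $\pa_t\psi=0$ at isolated points while $\Tr(\psi^2)$ is bounded away from zero there, so no uniform pointwise $m>0$ exists, and your "no common massless direction" argument cannot close this: the two quadratic forms live on different spaces (one on matrices pointwise, one on $L^2(S^1)$), and they can only be combined after integrating over the circle. The paper's proof does precisely this: it establishes the mass inequality for the integrated quantity $n(r)=\int_{S^1}\Tr(\psi^2)\,\dd t$ (its identity \eqref{identity 3}, proved by splitting $\psi$ into diagonal and off-diagonal parts in the quasi-periodic gauge), applies Lemma \ref{lemma:decay}(a) to $n(r)$, and then upgrades to decay of $\sup_t\Tr(\psi^2)$ using only subharmonicity $\triangle\Tr(\psi^2)\geq0$ and the mean-value property, i.e.\ Lemma \ref{lemma:decay}(b). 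This is exactly why the lemma is split into parts (a), (b), (c); part (c) is used later (in proposition \ref{prop:decay 2}), where a genuinely pointwise mass is available.

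A second, smaller gap is your treatment of the integrability of $|D''D'_{h_P}\psi|$: "every surviving term pairs the growth of $\phi$ with a component of $\psi$ that decays fast enough to win" is an expectation, not an argument, and making it precise via elliptic regularity is delicate because the coefficients of the equation grow like $e^{\beta r/k}$, so standard Schauder estimates do not directly give derivative decay at the same rate as $\psi$. The paper sidesteps all of this with an exact algebraic identity (analogous to its identity \eqref{identity}) expressing $D''D'_{h_P}\psi$ in terms of $\mathcal{F}^{h_P}=0$ plus terms quadratic in $D''\psi$ and $D'_{h_P}\psi$ whose coefficients are bounded analytic functions of the eigenvalues of $\psi$; since $\psi$ is bounded this yields the pointwise bound $|D''D'_{h_P}\psi|\leq C|D''\psi|^2$, and integrability follows from part (b) with no regularity theory at all. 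Your derivation of part (b) itself (from $\mathcal{F}^{h_P}=0$ and boundedness of the weights $\rho(\lambda_j-\lambda_i)$) is fine and matches the paper in spirit, though the paper obtains it more directly from the monotonicity $n'(r)\leq0$ produced by Lemma \ref{lemma:decay}(a).
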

\begin{proof}
Let $\lambda_i$ denote the eigenvalues of $\psi$ and let us choose a local frame consisting of eigenvectors of $\psi$ which is orthonormal with respect to $h_P$.  Again, we write $X_{ij}$ for the matrix elements of an endomorphism $X$ with respect to this frame.

The following identity plays a key role in this proof:
\begin{equation}\label{identity}
(e^{-\psi} D'_{h_P}e^\psi)_{ij} = \frac{e^{\lambda_j-\lambda_i}-1}{\lambda_j-\lambda_i}D'_{h_P}\psi_{ij}.
\end{equation}
Here the right hand side is understood to equal $D'_{h_P}\psi_{ij}$ if $\lambda_i=\lambda_j$.  We now present a short proof of this identity.

First, the identity $[\psi,e^\psi]=0$ implies that $[D'_{h_P}e^\psi,\psi]=[D'_{h_P}\psi,e^{\psi}]$ and hence that
\begin{equation}
[e^{-\psi} D'_{h_P}e^\psi,\psi]=e^{-\psi}[D'_{h_P}e^\psi,\psi] = e^{-\psi}[D'_{h_P}\psi,e^{\psi}].
\end{equation}
This implies that $(\lambda_j-\lambda_i)(e^{-\psi}D'_{h_P}e^\psi)_{ij}=(e^{\lambda_j-\lambda_i}-1)D'_{h_P}\psi_{ij}$, and if $\lambda_j\neq\lambda_i$ the result follows.  Second, consider the case that $\lambda_i=\lambda_j$ for some $i,j$ at a given point $z$.  Without loss of generality we may assume that $\lambda_i(z)=\lambda_j(z)=0$, since neither side of equation \eqref{identity} is changed by adding a multiple of the identity matrix to $\psi$.  We have that
\begin{equation}
e^\psi = \mathrm{Id}_k + \psi + \psi \rho(\psi) \psi,
\end{equation}
where $\rho$ is the analytic function defined in equation \eqref{def:f}.  Then
\begin{equation}
D'_{h_P}e^\psi = D'_{h_P}\psi + D'_{h_P}(\psi \rho(\psi)) \psi + \psi \rho(\psi) D'_{h_P}\psi.
\end{equation}
We are interested in the $i,j$-component of this matrix equation.  Since $\psi$ is diagonal and $\psi_{ii}=\psi_{jj}=0$ at the point $z$, the $i,j$-component is $(e^{-\psi}D'_{h_P}e^\psi)_{ij}(z) = D'_{h_P}\psi_{ij}(z)$, which was to be proved.

Let $\mathcal{N}$ be the non-negative real function $\mathcal{N}=\Tr(\psi^2)$ and let $n(r)=\int_{S^1}\mathcal{N}(r,t)\dd t$.  We aim to prove the following two identities for some positive constant $C$:
\begin{align}
\label{identity 2}\triangle\mathcal{N}&\geq4\ast \mathcal{DS}(h_P,h) \\
\label{identity 3}\int_{S^1}\mathcal{DS}(h_P,h)(r,t)\dd t &\geq C n(r).
\end{align}
Here $\triangle=\pa_r^2+\pa_t^2$ is the Laplacian with respect to the cylindrical metric $\dd r^2+\dd t^2$ and $\ast$ denotes Hodge star for the same metric.  It follows from these and from lemma \ref{lemma:decay} that $n(r)$ decays exponentially as $r\to\infty$ and that $n'(r)\leq0$ for all $r$.  Since $\triangle\mathcal{N}\geq0$ the lemma shows moreover that $\sup_{t}\mathcal{N}(r,t)$ decays exponentially with $r$, as claimed in (a).  Moreover, claim (b) will also follow because the fact that $n'(r)\leq0$ will imply that
\begin{equation}
4\int_{r\geq R}\mathcal{DS} \leq \int_{r\geq R}\triangle\mathcal{N}\dd r\dd t = \lim_{r\to\infty}n'(r)-n'(R)\leq -n'(R)<\infty.
\end{equation}

First we prove identity \eqref{identity 2}.  Differentiating $\mathcal{N}$ yields
\begin{equation*}
\pa\mathcal{N} = \Tr\pa^{h_P}(\psi^2)=\Tr D'_{h_P}(\psi^2)=2\Tr(\psi D'_{h_P}\psi)=2\sum_i\lambda_i(D'_{h_P}\psi)_{ii}
\end{equation*}
since $[\Phi^{\ast h_P},\psi^2]$ is traceless.  Then, by equation \eqref{identity},
\begin{equation*}
\pa\mathcal{N}=2\sum_i\lambda_i(e^{-\psi}D'_{h_P}e^\psi)_{ii}=2\Tr(\psi e^{-\psi}D'_{h_P}e^\psi).
\end{equation*}
The hermitian-Einstein equation for $h$ reads
\begin{equation*}
0=\mathcal{F}^{h}=\mathcal{F}^{h_P}+D''(e^{-\psi}D'_{h_P}e^\psi).
\end{equation*}
Therefore differentiating $\mathcal{N}$ again and employing the identity \eqref{identity} yields
\begin{align*}
\bar{\pa}\pa\mathcal{N}&=2\Tr(D''(\psi e^{-\psi}D'_{h_P}e^\psi)) \\
&= 2\Tr(D''\psi\wedge e^{-\psi}D'_{h_P}e^\psi - \psi\mathcal{F}^h) \\
&= 2\sum_{i,j}\frac{e^{\lambda_j-\lambda_i}}{\lambda_j-\lambda_i}D''\psi_{ji}\wedge D'_{h_P}\psi_{ij}-2\Tr(\psi\mathcal{F}^{h_P}).
\end{align*}
The identity \eqref{identity 2} for $\triangle\mathcal{N}=-2\ii\ast\bar{\pa}\pa\mathcal{N}$ then follows from the definition \eqref{DS} of the Donaldson-Simpson density and the fact that the analytic function
\begin{equation*}
\frac{e^x-1}{x}-\frac{e^x-x-1}{x^2}
\end{equation*}
is non-negative for all $x\in\RR$.

%First we prove identity \eqref{identity 2}.  Differentiating $\mathcal{N}$ and applying the identity \eqref{identity} yields
%\begin{equation}
%\pa\mathcal{N} = 2\Tr(\psi\pa^{h_P}\psi) = 2\sum_i\lambda_i(\pa^{h_P}\psi)_{ii} = 2\Tr(\psi e^{-\psi}\pa^{h_P}e^\psi).
%\end{equation}
%The hermitian-Einstein equation for $h$ reads
%\begin{equation}\label{HE local}
%F^{h_P} + \bar{\pa}(e^{-\psi}\pa^{h_P}e^{\psi}) + [\Phi,\Phi^{\ast h_P}] + [\Phi,e^{-\psi}[\Phi^{\ast h_P},e^\psi]] = 0.
%\end{equation}
%Therefore differentiating again yields
%\begin{align}
%\bar{\pa}\pa\mathcal{N} &= -2\ii\bar{\pa}\pa\mathcal{N} \\
%&= 2\Tr(\bar{\pa}\psi \wedge e^{-\psi}\pa^{h_P}e^\psi) -2\Tr\big(\psi( F^{h_P}+[\Phi,\Phi^{\ast h_P}] + [\Phi,e^{-\psi}[\Phi^{\ast h_P},e^\psi]] ) \big) \\
%&= -2\Tr\big(\psi(F^{h_P}+[\Phi,\Phi^{\ast h_P}])\big) + 2\Tr(\bar{\pa}\psi \wedge e^{-\psi}\pa^{h_P}e^\psi + [\Phi,\psi]\wedge e^{-\psi}[\Phi^{\ast h_P},e^\psi] ).
%\end{align}
%Thus using the identities \eqref{identity} and \eqref{identity 1.5} one obtains
%\begin{multline}
%\ast\triangle\mathcal{N} = 4\ii\Tr\big(\psi(F^{h_P}+[\Phi,\Phi^{\ast h_P}])\big) \\
%+4\sum_{i,j}\frac{e^{\lambda_j-\lambda_i}-1}{\lambda_j-\lambda_i}\left(\bar{\pa}\psi_{ji}\wedge\ast(\bar{\pa}\psi)^{\ast h_P}_{ij} + [\Phi,\psi]_{ji}\wedge\ast[\Phi,\psi]^{\ast h_P}_{ij}\right).
%\end{multline}
%Then identity \eqref{identity 2} follows from the fact that the analytic function
%\begin{equation}
%\frac{e^x-1}{x}-\frac{e^x-x-1}{x^2}
%\end{equation}
%is non-negative for all $x\in\RR$.

Now we prove identity \eqref{identity 3}.  Note that, by construction, $F^{h_P}=0$ and $[\Phi,\Phi^{\ast h_P}]=0$.  Since $\psi$ is bounded we know that there is a positive constant $C_1$ such that
\begin{equation}
\label{bound on L2 norm}
\ast \mathcal{DS}(h,h_P) \geq C_1 \ast \Tr \left(\bar{\pa}\psi\wedge\ast(\bar{\pa}\psi)^{\ast h_0} + [\Phi,\psi]\wedge\ast[\Phi,\psi]^{\ast h_0}\right).
\end{equation}
We work in the gauge of equations \eqref{quasi-periodic 1} and \eqref{quasi-periodic 2}.  In this gauge $\psi$ need not be diagonal.  We write $\psi=\psi^D+\psi^\perp$, where $\psi^D$ is diagonal and $\psi^{\perp}$ has zeros on its diagonal.  As a result of equations \eqref{quasi-periodic 2} and \eqref{PHB weights definition}, the entries of $\psi$ satisfy
\begin{equation}\label{eq:psi quasi-periodic}
\psi_{ij}(r,t+2\pi/\beta) = e^{2\pi\ii(\alpha_P^j-\alpha_P^i)}\psi_{ij}(r,t) = e^{2\pi\ii(j-i)/k}\psi_{ij}(r,t).
\end{equation}
Thus the entries of $\psi^\perp$ are quasi-periodic.  By considering the Fourier series or otherwise we deduce that
\begin{equation}\label{inequality 1}
\int_{S^1}\ast\Tr(\bar{\pa}\psi\wedge\ast(\bar{\pa}\psi)^{\ast h_P})\dd t \geq \frac{1}{4}\int_{S^1}\Tr(\pa_t\psi\pa_t\psi^{\ast h_P})\dd t \geq C_2\int_{S^1}\Tr((\psi^\perp)^2).
\end{equation}
for a positive constant $C_2$ that depends on $k$.

Now consider the term $\ast\Tr([\Phi,\psi]\wedge\ast[\Phi,\psi]^{\ast h_P})=\frac{1}{2}\Tr(\psi[\phi,[\phi^{\ast h_P},\psi]])$.  Recall from equation \eqref{quasi-periodic 1} that the leading term in $\phi$ as $r\to\infty$ is the matrix $e^{\beta s/k}\Sh^{-1}$, where $\Sh$ is given in equation \eqref{def:shift}.  It is straightforward to check that the self-adjoint operator $[\Sh^{-1},[(\Sh^{-1})^{\ast h_P},\cdot\,]]=[\Sh^{-1},[\Sh,\cdot\,]]$ acting on traceless diagonal matrices has positive eigenvalues.  Therefore there exists a constant $C_3$ such that for sufficiently large $r$
\begin{equation}
\label{inequality 2}
\ast \Tr([\Phi,\psi]\wedge\ast[\Phi,\psi]^{\ast h_P})\geq C_3 \Tr((\psi^{\perp})^2).
\end{equation}
Identity \eqref{identity 3} follows from inequalities \eqref{inequality 1} and \eqref{inequality 2}.

Having established parts (a) and (b) of the proposition, we now prove part (c).  By equation \eqref{bound on L2 norm} the integral of $|D''\psi|^2$ is bounded above by a multiple of the Donaldson-Simpson functional, which is finite by part (b).  We establish integrability of $|D''D'\psi|$ using the following identity, whose proof is similar to that of \eqref{identity}:
\begin{multline}
\frac{e^{\lambda_j-\lambda_i}-1}{\lambda_j-\lambda_i}(D''D'_{h_P}\psi)_{ij} = \mathcal{F}^{h_P}_{ij} \\
+ \frac{e^{\lambda_j}}{\lambda_i-\lambda_j}\left(\frac{e^{-\lambda_i}-e^{-\lambda_k}}{\lambda_i-\lambda_k}-\frac{e^{-\lambda_j}-e^{-\lambda_k}}{\lambda_j-\lambda_k}\right)D''\psi_{ik}\wedge D'_{h_P}\psi_{kj} \\
+ \frac{e^{-\lambda_i}}{\lambda_i-\lambda_j}\left(\frac{e^{\lambda_i}-e^{\lambda_k}}{\lambda_i-\lambda_k}-\frac{e^{\lambda_j}-e^{\lambda_k}}{\lambda_j-\lambda_k}\right)D'_{h_P}\psi_{ik}\wedge D''\psi_{kj}.
\end{multline}
As has already been observed, $\mathcal{F}^{h_P}=0$.  The coefficients of the remaining three terms are analytic functions which remain bounded as $\lambda_i-\lambda_j$, $\lambda_i-\lambda_k$ or $\lambda_k-\lambda_j$ approach 0.  Moreover, the coefficient $(e^{\lambda_j-\lambda_i}-1)/(\lambda_j-\lambda_i)$ of $(D''D'_{h_P}\psi)_{ij}$ never vanishes.  Since $\psi$ is bounded, it follows that $|D''D'_{h_P}\psi|$ is bounded from above by a constant multiple of $|D''\psi|^2$ and thus is integrable.
\end{proof}

To prove our next result on the decay of hermitian metrics, we need the following lemma:
\begin{lemma}\label{lemma:Vi}
Let $\Sh$ be the $k\times k$ matrix defined in equation \eqref{def:shift}.  Consider the following functions on the space of $k\times k$ hermitian matrices:
\begin{align}
V_1(\psi) &= \Tr(e^{-\psi}\Sh^{-1}e^\psi-\Sh^{-1})(\Sh-e^{-\psi}\Sh e^{\psi}) \\
V_2(\psi) &= \Tr([\Sh^{-1},e^{-\psi}\Sh e^\psi][\Sh ^{-1},e^{-\psi}\Sh e^\psi]) \\
V_3(\psi) &= \Tr([\Sh^{-1},e^{-\psi}\Sh e^\psi][\Sh ,e^{-\psi}\Sh^{-1}e^\psi]).
\end{align}
Then there exist constants $\epsilon>0$ and $C>1$ such that
\begin{equation}
|\psi|<\epsilon \implies 0\leq V_i(\psi) < CV_j(\psi) \quad\forall i,j\in\{1,2,3\}.
\end{equation}
\end{lemma}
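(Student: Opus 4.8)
The plan is to treat the positive-definite matrix $g=e^\psi$ as a hermitian metric and to work systematically with the adjoint it induces, $X^\star:=e^{-\psi}X^\dagger e^\psi$, where $\dagger$ is the ordinary conjugate transpose. Writing $A:=e^{-\psi}\Sh e^\psi$, and using that the shift is unitary ($\Sh^\dagger=\Sh^{-1}$), one gets the relations $\Sh^\star=A^{-1}$, $(\Sh^{-1})^\star=A$, $A^\star=\Sh^{-1}$, $(A^{-1})^\star=\Sh$. These identities are the engine of the argument: they let me recognise each $V_i$ as a genuine squared norm for the inner product $\langle u,v\rangle=u^\dagger e^\psi v$, with $\|X\|_\psi^2:=\Tr(X^\star X)=\Tr(g^{-1}X^\dagger gX)=\|g^{1/2}Xg^{-1/2}\|_F^2\ge 0$.

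First I would prove non-negativity. Setting $P:=A-\Sh$, the identity $P^\star=A^\star-\Sh^\star=\Sh^{-1}-A^{-1}=-(A^{-1}-\Sh^{-1})$ shows that $V_1=-\Tr\big((A^{-1}-\Sh^{-1})(\Sh-A)\big)=\Tr(P^\star P)=\|P\|_\psi^2\ge 0$. Next, $M:=[\Sh^{-1},A]$ satisfies $M^\star=M$ (immediate from the relations above), so $V_2=\Tr(M^2)=\Tr(M^\star M)=\|M\|_\psi^2\ge 0$. For $V_3$, with $N:=[\Sh,A^{-1}]$, I would use the algebraic identity $N=\Sh A^{-1}\,M\,A^{-1}\Sh$ (verified by expanding $A^{-1}MA^{-1}=[A^{-1},\Sh^{-1}]$), together with the facts that $\Sh A^{-1}$ and $A^{-1}\Sh$ are $\star$-self-adjoint and tend to $\mathrm{Id}_k$ as $\psi\to0$. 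Writing $\Sh A^{-1}=\mathrm{Id}_k+\beta$ and $A^{-1}\Sh=\mathrm{Id}_k+\beta'$ gives $V_3=\Tr(MN)=\Tr(M^2)+\text{(terms bounded by }C(\|\beta\|+\|\beta'\|)\,\Tr(M^2))$, whence $V_3=(1+O(\epsilon))V_2$ for $|\psi|<\epsilon$, which yields both $V_3\ge 0$ near $0$ and $V_3\asymp V_2$.

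For comparability I would first diagonalise the cyclic shift, $\Sh=\diag(\lambda_0,\dots,\lambda_{k-1})$ with $\lambda_a$ the distinct $k$-th roots of unity, in a unitary (Fourier) frame that leaves $\dagger$, $\Tr$ and hermiticity intact. A short Taylor expansion then shows each $V_i$ vanishes to second order at $\psi=0$, with leading quadratic forms $Q_1(\psi)=\sum_{a,b}|\lambda_a-\lambda_b|^2|\psi_{ab}|^2$ and $Q_2(\psi)=Q_3(\psi)=\sum_{a,b}|\lambda_a-\lambda_b|^4|\psi_{ab}|^2$. Since $|\lambda_a-\lambda_b|^2$ lies between fixed positive constants for $a\ne b$, these forms are positive semi-definite and mutually comparable, all with the same kernel $K$, namely the circulant matrices (those commuting with $\Sh$). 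Now the norm representations $V_1=\|P\|_\psi^2$ and $V_2=\|[\Sh^{-1},P]\|_\psi^2$ reduce the comparison $V_1\asymp V_2$ to comparing $\|P\|$ with $\|[\Sh^{-1},P]\|$; since $[\Sh^{-1},\,\cdot\,]$ annihilates $K$ and is bounded above and below on its complement (a fact made quantitative by the eigenvalue factors $|\lambda_a-\lambda_b|^2$), this reduces further to controlling the $K$-component of $P$.

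The main obstacle is precisely this degeneration along $K$: because all three $V_i$ vanish identically on circulant $\psi$ (there $A=\Sh$ and $P=0$), the comparison cannot be read off from the quadratic forms near that locus, and one must prevent the higher-order terms from destroying the lower bound. The resolution is that the linear-in-$\psi$ part of $P$ equals $[\Sh,\psi]$, which is purely off-diagonal in the Fourier frame and hence lies in $K^\perp$; consequently the circulant component $P_K$ vanishes to second order in the off-diagonal part $\psi^\perp$. Restricting to $|\psi|<\epsilon$, so that the diagonal part of $\psi$ ranges over a compact set, yields a uniform estimate $\|P_K\|\le C\|\psi^\perp\|^2\le C\epsilon\,\|P^\perp\|$, giving $\|P\|\asymp\|P^\perp\|\asymp\|[\Sh^{-1},P]\|$ and therefore $V_1\asymp V_2$. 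Combined with $V_3=(1+O(\epsilon))V_2$ from the second step, this produces the threshold $\epsilon>0$ and constant $C>1$ asserted in the lemma.
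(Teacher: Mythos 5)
Your strategy is sound and genuinely different from the paper's. The paper proceeds by showing that $V_i$ \emph{and} $\dd V_i$ vanish at every point of the commutant $\Delta$ of $\Sh$ (your $K$), computing the Hessians at $0$ to be $\Tr(XT(X))$ and $\Tr(XT^2(X))$ with $T=[\Sh,[\Sh^{-1},\cdot\,]]$, checking positive-definiteness transverse to $\Delta$, and then Taylor-expanding each scalar function $V_i$ about the nearest point of $\Delta$. You instead work at the operator level: the $\star$-adjoint relations $\Sh^\star=A^{-1}$, $A^\star=\Sh^{-1}$ exhibit $V_1=\|P\|_\psi^2$ and $V_2=\|[\Sh^{-1},P]\|_\psi^2$ as squared norms (so non-negativity of $V_1,V_2$ holds for \emph{all} $\psi$, not just near $\Delta$), and the exact identity $N=\Sh A^{-1}MA^{-1}\Sh$ gives $V_3=(1+O(\epsilon))V_2$ without any Hessian computation. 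Both arguments ultimately rest on the same fact — the $k$-th roots of unity are distinct, so $|\lambda_a-\lambda_b|^2$ is bounded below off the diagonal — and both must confront the degeneracy along circulants.

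It is exactly at that degeneracy that your write-up has a gap. You justify $\|P_K\|\le C\|\psi^\perp\|^2$ by saying that the linear-in-$\psi$ part of $P$ \emph{at the origin} is $[\Sh,\psi]\in K^\perp$, plus compactness in $\psi^D$. That inference is invalid in general: the map $P(x,y)=(xy,\,y)$ on $\RR^2$, with $K=\{y=0\}$ and $K$-component listed first, vanishes on $K$ and has linear part at the origin lying in $K^\perp$, yet its $K$-component $xy$ is not $O(y^2)$ on any neighbourhood of $0$ (take $y=x^2$). What rescues your estimate is a statement you never prove: the $\psi^\perp$-derivative of $P$ lies in $K^\perp$ at \emph{every} point of $K$, not just at $0$ — this is precisely the role played in the paper by its item 1, which establishes $\dd V_i=0$ on all of $\Delta$. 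The repair is short: writing $P=\sum_{n\geq1}\frac{(-1)^n}{n!}\,\mathrm{ad}_\psi^n\,\Sh$ and using $\mathrm{ad}_{\psi^D}\Sh=0$, the terms linear in $\psi^\perp$ are $\mathrm{ad}_{\psi^D}^{\,n-1}\mathrm{ad}_{\psi^\perp}\Sh$ up to constants, and $\mathrm{ad}_{\psi^D}$ preserves the space of matrices with vanishing Fourier-diagonal; hence every contribution to $P_K$ carries at least two factors of $\psi^\perp$, giving the quadratic bound uniformly on $|\psi|\le\epsilon$. (Even more cheaply, the same expansion gives $P=[\Sh,\psi^\perp]+O(|\psi|\,|\psi^\perp|)$, hence $\|P_K\|\le C\epsilon\|\psi^\perp\|$, which is all your final chain of inequalities actually uses.) With that point supplied, your proof is complete.
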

\begin{proof}
Let $U$ denote the set of all $k\times k$ hermitian matrices and let $\Delta$ denote the set of hermitian matrices that commute with $\Sh$.  One can show that $\Delta$ is equal to the intersection of the span of the linearly-independent matrices $\mathrm{Id}_k, \Sh, \Sh^2,\ldots,\Sh^{k-1}$ with $U$.
We will show that:
\begin{enumerate}
\item $V_i(\psi)=0$ and $\dd V_i(\psi)=0$ for $\psi\in \Delta$.
\item The quadratic forms $Q_i$ on $U/\Delta$ defined by the hessians of $V_i$ at $0$ are positive definite.
\end{enumerate}
The result then follows by considering the Taylor expansions of $V_i$ about points $\psi\in\Delta$.

Item 1 follows almost immediately from the observation that the matrices
\begin{equation}
e^{-\psi}\Sh^{-1}e^\psi-\Sh^{-1},\quad \Sh-e^{-\psi}\Sh e^\psi,\quad [\Sh^{-1},e^{-\psi}\Sh e^\psi],\quad [\Sh,e^{-\psi}\Sh^{-1}e^\psi]
\end{equation}
all vanish when $\psi\in\Delta$.

For item 2, we introduce the operator $T(X)=[\Sh,[\Sh^{-1},X]]=[\Sh^{-1},[\Sh,X]]$ acting on hermitian matrices $X$.  A short calculation shows that the hessians of $V_1,V_2,V_3$ are
\begin{align}
\frac{\dd^2}{\dd t^2} V_1(tX)\big|_{t=0}&=\Tr(XT(X)),\\
\frac{\dd^2}{\dd t^2} V_2(tX)\big|_{t=0}=\frac{\dd^2}{\dd t^2} V_3(tX)\big|_{t=0}&=\Tr(XT^2(X)).
\end{align}
Since $\Sh$ is unitary it can be diagonalised, and the eigenvalues of $T$ are of the form $|\lambda_i-\lambda_j|^2$ for eigenvalues $\lambda_i,\lambda_j$ of $\Sh$.  It is straightforward to check that the eigenvalues of $\Sh$ are distinct (in fact they are the roots of unity), so precisely $k$ of the eigenvalues of $T$ are zero.  The corresponding eigenspace is $\Delta$, and $T$ defines a positive definite operator on $U/\Delta$.  It follows that the quadratic forms $Q_i$ on $H/\Delta$ are positive definite.
\end{proof}

Now we state our second result about the decay of hermitian-Einstein metrics, from which part two of theorem \ref{thm:KH} follows:
\begin{proposition}\label{prop:decay 2}
Let $E$ be a cylinder Higgs bundle and let $z=e^{-\beta s}$ be a local holomorphic coordinate such that $z=0$ is a parabolic point, and write $s=r+\ii t$ for real coordinates $r,t$.  Let $h_P$ be a hermitian metric defined as in \eqref{def:h0} and let $\psi$ be a bounded traceless self-adjoint section of $\End(E)$ such that $h=h_P(\cdot,e^\psi\cdot)$ is a hermitian-Einstein metric.  Then $\sup_t|F^{h}|(r,t)$ decays faster than any exponential function as $r\to\infty$.
\end{proposition}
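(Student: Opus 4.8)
The plan is to work throughout in the quasi-periodic gauge \eqref{quasi-periodic 1}--\eqref{quasi-periodic 2}, where $h_P=\mathrm{Id}_k$, and to exploit that the curvature of $h=h_P(\cdot,e^\psi\cdot)$ is governed by the hermitian-Einstein equation. Since $F^{h_P}=0$ and $[\Phi,\Phi^{\ast h_P}]=0$ (all terms of $\phi$ are powers of $\Sh$, which commute), the equation $\mathcal F^{h}=0$ gives $F^{h}=-[\Phi,\Phi^{\ast h}]$, whose $\End(E)$-valued coefficient is $-[\phi,\phi^{\ast h}]$ with $\phi^{\ast h}=e^{-\psi}\phi^{\dagger}e^{\psi}$. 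Expanding $\phi=\sum_{j\geq-1}\phi_j e^{-\beta s j/k}\Sh^{j}$ and using $[\Sh^j,\Sh^{-l}]=0$,
\[
[\phi,\phi^{\ast h}]=\sum_{j,l\geq-1}\phi_j\bar\phi_l\,e^{-\beta(js+l\bar s)/k}\,[\Sh^{j},e^{-\psi}\Sh^{-l}e^{\psi}].
\]
Every commutator here vanishes when $\psi$ commutes with $\Sh$, i.e.\ when $\psi$ lies in the subspace $\Delta$ of lemma \ref{lemma:Vi}; arguing as in that lemma (identical vanishing locus $\Delta$, positive-definite transverse Hessian) each is $\lesssim\sqrt{V_2(\psi)}$ once $|\psi|<\epsilon$. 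As $j+l\geq-2$ the prefactors are $O(e^{2\beta r/k})$, so $\sup_t|F^{h}|\lesssim e^{2\beta r/k}\sqrt{V_2(\psi)}$. Since the property of decaying faster than any exponential is preserved under taking square roots and under multiplication by the single exponential $e^{2\beta r/k}$, it suffices to prove that $V_2(\psi)$ decays faster than any exponential. The crucial structural point is that $F^{h}$ sees $\psi$ only modulo $\Delta$, and it is exactly these transverse directions on which the hermitian-Einstein equation will supply a growing mass.

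To capture this I would study the nonnegative function $G=\Tr(\psi\,T\psi)$, where $T=[\Sh^{-1},[\Sh,\cdot]]$ is the operator of lemma \ref{lemma:Vi}, so $T\succeq0$, $\ker T=\Delta$, and $G\sim V_1(\psi)\sim V_2(\psi)$ for small $\psi$. As $T$ is a fixed self-adjoint operator, $\triangle G=2\Tr(\triangle\psi\,T\psi)+2\big(\Tr(\pa_r\psi\,T\pa_r\psi)+\Tr(\pa_t\psi\,T\pa_t\psi)\big)$, and the second group is nonnegative. I would then substitute the hermitian-Einstein equation in the form $D''(e^{-\psi}D'_{h_P}e^{\psi})=0$ used in proposition \ref{prop:decay 1} to eliminate $\triangle\psi$: its leading part as $r\to\infty$ is $\triangle\psi=c_1 e^{2\beta r/k}|\phi_{-1}(0)|^{2}\,T\psi+(\text{lower order})$ with $c_1>0$, the positive sign being the one consistent with the subharmonicity $\triangle\mathcal N\geq 4\ast\mathcal{DS}\geq0$ of proposition \ref{prop:decay 1}. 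Hence $\Tr(\triangle\psi\,T\psi)$ is, to leading order, $c_1|\phi_{-1}(0)|^2 e^{2\beta r/k}\Tr((T\psi)^2)\geq c_1|\phi_{-1}(0)|^2\lambda_{+}\,e^{2\beta r/k}\,G$, where $\lambda_+>0$ is the least positive eigenvalue of $T$. Because $\psi\to0$ uniformly by proposition \ref{prop:decay 1}(a), the nonlinear remainder and the subleading terms of $\phi$ (each carrying an extra factor $|\psi|$ or $e^{-\beta r/k}$) are dominated by this leading term for large $r$, yielding constants $R_1,c_2>0$ with
\[
\triangle G\geq m(r)^2\,G,\qquad m(r)^2=c_2\,e^{2\beta r/k},\quad r\geq R_1.
\]

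Finally, for any $\mu>0$ choose $R_\mu\geq R_1$ so that $m(r)\geq\mu$ for $r\geq R_\mu$. On $[R_\mu,\infty)\times S^1$ we then have $\triangle G\geq\mu^2 G$ with $G$ bounded (as $\psi$ is bounded) and nonnegative, so lemma \ref{lemma:decay}(c), applied with its origin shifted to $R_\mu$, gives $\sup_t G(r,t)=O(e^{-\mu r})$. As $\mu$ is arbitrary, $G$---and with it $V_2(\psi)$---decays faster than any exponential, and therefore so does $\sup_t|F^{h}|\lesssim e^{2\beta r/k}\sqrt{V_2(\psi)}$, which is the assertion.

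The main obstacle is the middle step, the growing-mass inequality $\triangle G\geq m(r)^2 G$. The delicacy is that, through the hermitian-Einstein equation, $G$ (equivalently the class of $\psi$ modulo $\Delta$) couples to the circulant part $\psi^{\Delta}\in\Delta$, which is annihilated by $T$ and decays only at a fixed exponential rate; one must check that this coupling, together with the genuine nonlinearities and the subleading terms of $\phi$, is of strictly lower order than the leading $e^{2\beta r/k}G$ once $r$ is large. Lemma \ref{lemma:Vi} is precisely what supplies the sign and nondegeneracy ($T\succeq0$, $\ker T=\Delta$, comparability of the $V_i$ and of the various commutators with $\sqrt{V_2}$) that make the estimate close.
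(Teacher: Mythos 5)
Your overall strategy---work in the quasi-periodic gauge, reduce the claim to a differential inequality with growing mass $m(r)^2\sim e^{2\beta r/k}$ for a nonnegative quantity that vanishes exactly on $\Delta$, apply lemma \ref{lemma:decay}(c) for arbitrarily large $\mu$, and then feed the super-exponential decay into $|F^h|\lesssim e^{2\beta r/k}\sqrt{\mathcal{V}_2}$---is exactly the paper's. The difference is \emph{which} quantity you differentiate, and that is where your argument has a genuine gap. You take $G=\Tr(\psi T\psi)$, so to compute $\triangle G$ you must extract $\triangle\psi$ from the hermitian-Einstein equation. But $F^h=\bar{\pa}(e^{-\psi}\pa e^{\psi})$ is not $\bar{\pa}\pa\psi$ plus terms carrying ``an extra factor $|\psi|$ or $e^{-\beta r/k}$'': expanding $e^{-\psi}\pa e^{\psi}=\pa\psi+\sfrac12[\pa\psi,\psi]+\cdots$ produces terms quadratic in first derivatives, of the schematic form $[\bar{\pa}\psi\wedge\pa\psi]$. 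Paired with $T\psi$ these contribute errors of size $|\nabla\psi|^2\,|T\psi|$ with no sign, and nothing in your inequality controls them: the nonnegative terms at your disposal are $e^{2\beta r/k}\Tr\big((T\psi)^2\big)$ and $\Tr(\pa_r\psi\,T\pa_r\psi)+\Tr(\pa_t\psi\,T\pa_t\psi)$, and the latter only sees the component of $\nabla\psi$ transverse to $\Delta$. Proposition \ref{prop:decay 1} gives pointwise decay of $\psi$ and an $L^2$ bound on $D''\psi$, but no pointwise bound (let alone super-exponential decay) on $\nabla\psi$; Cauchy--Schwarz absorption therefore leaves an inhomogeneous remainder such as $e^{-2\beta r/k}|\nabla\psi|^4$ that is not bounded by $G$, and the resulting inequality $\triangle G\geq m^2G-Ce^{-\kappa r}$ is not one to which lemma \ref{lemma:decay}(c) applies---any attempt to make the remainder super-exponentially small is circular, since that is what you are trying to prove.

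The paper circumvents precisely this by differentiating $\mathcal{V}_1=\Tr\big((\Sh^{\ast h}-\Sh^{-1})(\Sh^{\ast h}-\Sh^{-1})^{\ast h}\big)$ instead of a functional of $\psi$. Because $\Sh$ is holomorphic and $h$ is hermitian-Einstein, $\triangle\mathcal{V}_1$ admits an exact Bochner-type identity: the first-derivative terms appear as $2\Tr\big((\pa^h\Sh)^{\ast h}\wedge\ast\pa^h\Sh\big)+2\Tr\big((\pa^h\Sh^{-1})^{\ast h}\wedge\ast\pa^h\Sh^{-1}\big)\geq 0$, i.e.\ with a favourable sign rather than as uncontrolled errors, while the remaining term is exactly the commutator pairing $\Tr\big(\Sh^{\ast h}[[\phi,\phi^{\ast h}],\Sh]+(\Sh^{-1})^{\ast h}[[\phi,\phi^{\ast h}],\Sh^{-1}]\big)$, which equals $C_1e^{2\beta r/k}(\mathcal{V}_2+\mathcal{V}_3)$ at leading order and is converted into $\geq C_2e^{2\beta r/k}\mathcal{V}_1$ by lemma \ref{lemma:Vi}. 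No expansion of the nonlinear equation in $\psi$ is ever performed, so no $|\nabla\psi|^2$ terms arise. As written, your step ``$\triangle G\geq m(r)^2G$'' is asserted but not derivable from the ingredients you cite, and the natural repair is to replace $G$ by $\mathcal{V}_1$, i.e.\ the paper's proof. (Note also that your diagnosis of the ``main obstacle'' points at the wrong term: the coupling to $\psi^{\Delta}$ through the mass term is harmless, precisely because $[\Sh^{-1},e^{-\psi}\Sh e^{\psi}]$ vanishes identically on $\Delta$; it is the derivative terms that break the argument.)
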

\begin{proof}
We work in the gauge of equations \eqref{quasi-periodic 1} and \eqref{quasi-periodic 2}.  Consider the functions
\begin{align}
\mathcal{V}_1(r,t) &= \Tr(\Sh^{\ast h}-\Sh^{-1})(\Sh^{\ast h}-\Sh^{-1})^{\ast h}\nonumber \\
&= \Tr(\Sh\Sh^{\ast h}+\Sh^{-1}(\Sh^{-1})^{\ast h})-2k \\
\mathcal{V}_2(r,t) &= \Tr([\Sh^{-1},(\Sh^{-1})^{\ast h}]^2) \\
\mathcal{V}_3(r,t) &= \Tr([\Sh^{-1},(\Sh^{-1})^{\ast h}][\Sh,\Sh^{\ast h}]).
\end{align}
These are just the compositions of the functions $V_1,V_2,V_3$ studied in lemma \ref{lemma:Vi} with $\psi$.  They are periodic, in the sense that $\mathcal{V}_i(r,t+2\pi/\beta)=\mathcal{V}_i(r,t)$.  This is most easily shown by rewriting these functions in terms of $Z=e^{-\beta s/k}\Sh$, which represents a well-defined section of $\End(E)$ over the cylinder.

We aim to estimate $\triangle \mathcal{V}_1$.  First, since $\bar{\pa}\Sh=0$,
\begin{equation}
\pa \mathcal{V}_1 = \Tr(\Sh^{\ast h}\pa^{h} \Sh + (\Sh^{-1})^{\ast h}(\pa^{h}\Sh^{-1})).
\end{equation}
Then, using again the holomorphicity of $\Sh$ and the hermitian-Einstein equation \eqref{eq:HE}, we obtain
\begin{align}
\ast\triangle \mathcal{V}_1 &= -2\ii \bar{\pa}\pa \mathcal{V}_1 \\
&= -2\ii\Tr((\pa^{h}\Sh)^{\ast h}\wedge\pa^{h} \Sh + (\pa^{h}\Sh^{-1})^{\ast h}\wedge(\pa^{h}\Sh^{-1})) \nonumber \\
&\quad -2\ii\Tr(\Sh^{\ast h}[F^{h}, \Sh] + (\Sh^{-1})^{\ast h}[F^{h},\Sh^{-1}]) \\
&= 2\Tr((\pa^{h}\Sh)^{\ast h}\wedge\ast\pa^{h} \Sh + (\pa^{h}\Sh^{-1})^{\ast h}\wedge\ast(\pa^{h}\Sh^{-1})) \nonumber \\
&\quad \ast \Tr(\Sh^{\ast h}[[\phi,\phi^{\ast h}], \Sh] + (\Sh^{-1})^{\ast h}[[\phi,\phi^{\ast h}],\Sh^{-1}]).
\end{align}
The leading term in the expansion of $\phi$ given in equation \eqref{quasi-periodic 1} is a positive multiple of $e^{\beta s/k}\Sh^{-1}$, so there exists a positive constant $C_1$ such that, for sufficiently large $r$,
\begin{align}
\triangle V_1 &\geq C_1 e^{2\beta r/k}\Tr\big(\Sh^{\ast h}[[\Sh^{-1},(\Sh^{-1})^{\ast h}], \Sh] \nonumber\\
&\qquad\qquad+ (\Sh^{-1})^{\ast h}[[\Sh^{-1},(\Sh^{-1})^{\ast h}],\Sh^{-1}]\big) \\
&= C_1 e^{2\beta r/k}\Tr([\Sh^{-1},(\Sh^{-1})^{\ast h}][\Sh,\Sh^{\ast h}] + [\Sh^{-1},(\Sh^{-1})^{\ast h}]^2 ) \\
&= C_1 e^{2\beta r/k}(\mathcal{V}_2+\mathcal{V}_3).
\end{align}
Let $\epsilon$ be the constant given in proposition \ref{lemma:Vi}.  By proposition \ref{prop:decay 1} $\psi$ satisfies the estimate $|\psi(r,t)|<\epsilon$ for sufficiently large $r$.  Therefore by lemma \ref{lemma:Vi} there exists a constant $C_2$ such that, for sufficiently large $r$,
\begin{equation}
\triangle \mathcal{V}_1 \geq C_2 e^{2\beta r/k} \mathcal{V}_1.
\end{equation}
By choosing $r$ sufficiently large the coefficient of $\mathcal{V}_1$ on the right hand side can be made arbitrarily large.  Therefore by part (c) of lemma \ref{lemma:decay} $\sup_t\mathcal{V}_1(r,t)$ decays faster than any exponential function of $r$.

Now we consider the curvature $F^{h}$ of the Chern connection of $h$.  By the hermitian-Einstein equation,
\begin{equation}
|F^{h}|^2_{h} = |[\Phi,\Phi^{\ast h}]|^2_h = \frac{1}{4}\Tr([\phi,\phi^{\ast h}]^2).
\end{equation}
By equation \eqref{quasi-periodic 1} the leading contribution to $\phi$ at large $r$ is $e^{\beta s/k}\Sh^{-1}$.  Therefore there exists a constant $C_3$ such that, for sufficiently large $r$,
\begin{equation}
|F^{h}|^2_{h} \leq C_3 e^{4\beta r/k}\Tr([\Sh^{-1},(\Sh^{-1})^{\ast h}]^2) = C_3e^{4\beta r/k}\mathcal{V}_2.
\end{equation}
By proposition \ref{prop:decay 1} and lemma \ref{lemma:Vi} there exists a positive constant $C_4$ such that, for sufficiently large $r$,
\begin{equation}
|F^{h}|^2_{h} \leq C_4 e^{4\beta r/k}\mathcal{V}_1.
\end{equation}
We have already shown that $\sup_t\mathcal{V}_1(r,t)$ decays faster than any exponential function, and it follows that $\sup_t|F^{h}|^2_{h}$ decays faster than any exponential function.
\end{proof}

Now we prove part (i) of theorem \ref{thm:KH}.  For the existence part we appeal to a theorem of Simpson \cite{simpson}.  Simpson proved existence of a hermitian-Einstein metric by applying a heat flow to the Donaldson-Simpson functional.  To use this theorem we need to show that the Higgs bundle is stable and supply a suitable initial hermitian metric.

First we show stability of $(E,\phi)$.  This entails showing that every $\phi$-invariant sub-bundle of $E$ satisfies a slope-stability condition.  We establish this trivially by showing that there are no non-trivial $\phi$-invariant sub-bundles.

To show that $E|_{\CC^\ast}$ has no $\phi$-invariant subbundles, consider the curve in $\CC^\ast\times\CC$ defined by the characteristic polynomial of $\phi$:
\[ \det(\zeta\mathrm{Id}_E-\phi(w))=0. \]
(This is the spectral curve, and will be discussed in more detail in the next section.)  The map $(w,\zeta)\mapsto w$ gives a $k$-sheeted branched covering of this curve over $\CC^\ast$, and the sheets of this covering correspond to the eigenvalues of $\phi$.  Near a parabolic point $P$ one obtains from part (\ref{HBC3}) of proposition \ref{prop:HBC} that the curve has equation
\[ 0 = \zeta^k + f_{k-1}(z)\zeta^{k-1}+\ldots+f_1(z)\zeta+f_0(z)-c/z. \]
It follows that for sufficiently small $z\neq0$ the eigenvalues of $\phi$ are distinct.  Moreover, as $z$ circles once around the point $z=0$ the eigenvalues are cyclically permuted.

Suppose that $F\subseteq E$ is a $\phi$-invariant sub-bundle.  The eigenvalues of the restriction of $\phi$ to $F$ at any point $z$ form a subset of the set of eigenvalues of $\phi$.  Since these eigenvalues depend continuously on $z$ as it circles the point $z=0$, this subset must be invariant under cyclic permutations, hence is either the whole set or the empty set.  Thus $F$ has rank either $k$ or 0, so is not a non-trivial sub-bundle.

Now we construct a suitable initial hermitian metric.  We may assume without loss of generality that $E$ has degree 0, since the degree can be changed by twisting $E$.  Since the parabolic degree of $E$ is zero, this means that the parabolic weights satisfy
\begin{equation}\label{eq:sum weights}
\sum_{P\in\{0,\infty\}}\sum_{i=0}^{k-1}\alpha_P^i = 0
\end{equation}
Let us choose a non-vanishing holomorphic section $e$ of the line bundle $\Lambda^k(E)$; doing so trivialises $\Lambda^k(E)$, and provides an identification of hermitian metrics $h$ on $\Lambda^k(E)$ with positive real functions $h(e,e)$.

Recall that the cylinder Higgs bundle has two parabolic points $P=0,\infty$, with local coordinates $z_0=w$ and $z_\infty=w^{-1}$.  Near each such point choose a compatible frame $e_0^P,\ldots,e_{k-1}^P$ as in proposition \ref{prop:cyclic local phi}.  We may assume that these frames satisfy $e_0^P\wedge\dots\wedge e_{k-1}^P=e$, since if not we can multiply them by a non-vanishing local holomorphic function so that they do.  We then choose hermitian metrics $h_0$, $h_\infty$ near each point as in equation \eqref{def:h0}.  The induced metrics on $\Lambda^k(E)$ are
\begin{equation}
\det(h_0)=|w|^{2\sum_i\alpha_0^i},\quad \det(h_\infty)=|w|^{-2\sum_i\alpha_\infty^i}.
\end{equation}
By equation \eqref{eq:sum weights}, $\det(h_\infty)=|w|^{2\sum_i\alpha_0^i}$.  Therefore there exists a smooth hermitian metric $h_I$ on $E$ which agrees with $h_0$ near $w=0$ and $h_\infty$ near $w=\infty$, such that the induced metric on $\Lambda^k(E)$ is
\begin{equation}
\det(h_I)=|w|^{2\sum_i\alpha_0^i}
\end{equation}

By construction, $\mathcal{F}^{h_I}$ vanishes in neighbourhoods of the parabolic points so has compact support.  Therefore $|\mathcal{F}^{h_I}|_{h_I}$ is bounded, so $h_I$ satisfies the hypotheses of Simpson's theorem.  Simpson's theorem gives a hermitian metric $h=h_I(\cdot,e^{\psi}\cdot)$ such that $\det(h)=\det(h_I)=|w|^{2\sum_i\alpha_0^i}$, $h$ and $h_I$ are mutually bounded, $|D''\psi|_{h_I}$ is square integrable, and
\begin{equation}
\mathcal{F}^{h}-\frac{1}{k}\Tr(\mathcal{F}^{h}) = 0.
\end{equation}
Since $\Tr\mathcal{F^h}=\Tr(F^h)=\bar{\pa}\pa\ln|w|^{2\sum_i\alpha_0^i}=0$, $\mathcal{F}^h=0$ and the metric $h$ is hermitian-Einstein.  Since $h_I$ is compatible with the parabolic structure and $h,h_I$ are mutually bounded, $h$ is also compatible with the parabolic structure.  Finally, since $h,h_I$ are mutually bounded, the section $\psi$ is bounded with respect to $h_I$ and proposition \ref{prop:decay 2} gives that the curvature $F^h$ decays faster than any exponential function.

Now we establish uniqueness statement of part (i), following a standard argument.  Suppose that $h_1=h_I(\cdot,e^{\psi_1}\cdot)$ and $h_2=h_I(\cdot,e^{\psi_2}\cdot)$ are two hermitian-Einstein metrics which are compatible with the parabolic structures.  The condition of compatibility ensures that the self-adjoint sections $\psi_1,\psi_2$ are bounded with respect to $h_I$.  Then $|D''\psi_i|_{h_I}$ and $|D'' D'_{h_I}\psi_i|_{h_I}$ have bounded integrals by proposition \ref{prop:decay 1}.

Let $\psi$ be the unique self-adjoint section of $\End(E)$ such that $h_2=h_1(\cdot,e^{\psi}\cdot)$.  Let $h_t=h_1(\cdot,e^{(t-1)\psi}\cdot)$ and let $f:[1,2]\to \RR$ be the function $f(t)=DS(h_I,h_t)$.  Then $f(t)$ has critical points at $t=1,2$.  By a result of Simpson \cite{simpson},
\begin{equation}
DS(h_I,h_{t+s}) = DS(h_I,h_t)+DS(h_t,h_{t+s}).
\end{equation}
Therefore
\begin{equation}
\frac{\dd^2f}{\dd t^2} = \int_{\CP^1\setminus\{0,\infty\}} \Tr(\bar{\pa}\psi\wedge\ast (\bar{\pa}\psi)^{\ast h_t} + [\Phi,\psi]\wedge\ast[\Phi,\psi]^{\ast h_t} )\geq 0.
\end{equation}
Since $f$ has critical points at $t=1,2$, $\dd^2 f/\dd t^2=0$.  It follows that $\psi$ is holomorphic and commutes with $\Phi$.

We claim that these conditions imply that $\psi$ is a multiple of the identity.  We could prove this using stability, but in the present case it is simpler to make a direct argument.  Consider the characteristic polynomial $\det(\zeta-\psi(w))$ of $\psi$.  The coefficients are holomorphic real-valued functions of $w\in\CC^\ast$, and hence constant.  Therefore the eigenvalues $\lambda_0,\ldots,\lambda_{k-1}$ of $\psi(w)$ are independent of $w$.  Recall that near a parabolic point the eigenvalues of $\phi$ are distinct.  Since $\psi$ commutes with $\phi$ the eigenspaces of $\phi$ are subspaces of eigenspaces of $\psi$.  Recall that the eigenspaces of $\phi$ are cyclically permuted as one circles the parabolic point.  It follows that the eigenvalues $\lambda_0,\ldots,\lambda_{k-1}$ of $\psi$ are invariant under some cyclic permutation, hence equal.  Therefore $\psi$ is proportional to the identity operator and $h_2$ is a rescaling of $h_1$.

\end{document}